\documentclass[11pt]{article}
\usepackage[T1]{fontenc}
\usepackage[utf8]{inputenc}
\usepackage[symbol]{footmisc}
\usepackage{amsmath,amssymb,amsfonts,amsthm}
\usepackage{algorithm,algorithmicx,algpseudocode}
\usepackage{fullpage,authblk,mathabx}
\usepackage{hyperref,cite,color}
\usepackage{enumitem}
\usepackage{graphicx}
\usepackage{tikz}
\usepackage{caption}
\usepackage{thm-restate}
\usetikzlibrary{shapes}
\usetikzlibrary{backgrounds}
\usetikzlibrary{positioning}
\usepackage{todonotes}
\usepackage{subcaption}
\usepackage{placeins}

\usetikzlibrary{decorations.pathmorphing}
\usetikzlibrary{decorations.markings,intersections,calc}
\usetikzlibrary{shapes.misc}

\hypersetup{
    colorlinks,
    linkcolor = {red!60!black},
    citecolor = {green!60!black},
    urlcolor  = {blue!60!black},
    pdftitle  = {An Erdos-Posa theorem for induced packings of even cycles},
    pdfauthor = {Ahn, Jaffke, Kwon, Lima, Park}
}
\newtheorem{theorem}{Theorem}[section]
\newtheorem{lemma}[theorem]{Lemma}
\newtheorem{corollary}[theorem]{Corollary}
\newtheorem{proposition}[theorem]{Proposition}

\newtheorem{claim}{Claim}

\newcommand{\tin}{\mathsf{tree} \textnormal{-} \alpha}

\newenvironment{clproof}{\begin{list}{}{%
			\setlength{\leftmargin}{3mm}%
		} \item {\it Proof of Claim.} }{\hfill$\lozenge$\end{list}}

\newcommand\abs[1]{\lvert #1\rvert}
\newcommand{\dist}{\mathrm{dist}}
\newcommand{\internal}{\mathrm{int}}
\newcommand\extrafootertext[1]{%
    \bgroup
    \renewcommand\thefootnote{\fnsymbol{footnote}}%
    \renewcommand\thempfootnote{\fnsymbol{mpfootnote}}%
    \footnotetext[0]{#1}%
    \egroup
}

\newcommand{\ep}{Erd\H{o}s-P\'osa }
\newcommand{\mcf}{\mathcal{F}}
\newcommand{\aux}{\operatorname{Aux}}

\begin{document}
\date{\today}
\title{An Erd\H{o}s-P\'{o}sa theorem for induced packings of even cycles}
\author[1]{Jungho~Ahn}
\author[2]{Lars~Jaffke}
\author[3,4]{O-joung~Kwon}
\author[2]{Paloma~T.~de Lima}
\author[3]{Hyemi~Park}
\affil[1]{Department of Computer Engineering, Inha University, Incheon, South~Korea}
\affil[2]{NHH Norwegian School of Economics, Bergen, Norway}
\affil[3]{Department of Mathematics, Hanyang University, Seoul, South~Korea}
\affil[4]{Discrete Mathematics Group, Institute for Basic Science (IBS), Daejeon, South~Korea}
\affil[ ]{\small\textit{Email addresses:}
    \texttt{junghoahn@inha.ac.kr},
    \texttt{lars.jaffke@nhh.no},
    \texttt{ojoungkwon@hanyang.ac.kr}, 
    \texttt{paloma.lima@nhh.no}
}

\maketitle

\extrafootertext{
O. Kwon and H. Park are supported by the National Research Foundation of Korea (NRF) grant funded by the Ministry of Science and ICT (No. RS-2023-00211670). O. Kwon is supported by the Institute for Basic Science (IBS-R029-C1).}

\begin{abstract}
    We show that for every graph $G$ and every positive integer $k$, $G$ contains either $k$ vertex-disjoint even cycles with no edges between distinct cycles, or a set of $O(k^2\log k)$ vertices whose closed neighborhood meets every even cycle of $G$.
    Moreover, we give a polynomial-time algorithm that outputs one of the two outcomes.
    As an application, we show that every $K_{1,t}$-free graph containing no induced packing of $k$ even cycles has tree-independence number $O(tk^2\log k)$, with a corresponding decomposition computable in polynomial time.
\end{abstract}

\section{Introduction}

A family $\mcf$ of graphs is said to satisfy the \emph{\ep property} if there exists a function~$f$ such that, for every positive integer~$k$, every graph~$G$ contains either~$k$ vertex-disjoint subgraphs isomorphic to graphs in $\mcf$, or a set of at most $f(k)$ vertices intersecting every subgraph of~$G$ isomorphic to a graph in $\mcf$.
Erd\H{o}s and P\'osa~\cite{ErdosP1965} proved in 1965 that the family of cycles satisfies the \ep property, and since then the property has been studied for numerous other families of graphs.
We refer the reader to the survey of Raymond and Thilikos~\cite{RaymondT2017} for an overview of the \ep property.

Among several variants of the \ep property, a \emph{coarse} analogue has recently been proposed and studied.
Georgakopoulos and Papasoglu~\cite{GeorgakopoulosP2025} investigated connections between graph minors and coarse geometry and, among other questions, proposed a coarse analogue of the \ep theorem formulated in terms of fat minors (Conjecture~9.7 in~\cite{GeorgakopoulosP2025}).
Recently, Albrechtsen, Bonamy, Bourneuf, and Davies~\cite{AlbrechtsenBRD2026} confirmed this conjecture.
In contrast, Albrechtsen and Davies~\cite{AlbrechtsenD2025} showed that not every planar graph has the coarse \ep property, even though the original \ep theorem extends to planar~$H$-minor models~\cite{RS1986}.

A set~$\mathcal{F}$ of vertex-disjoint subgraphs of a graph~$G$ is called an \emph{induced packing} in~$G$ if there is no edge of~$G$ between any two subgraphs of~$\mathcal{F}$.
Ahn, Gollin, Huynh, and Kwon~\cite{coarseEP} established a direct induced packing analogue of the \ep theorem by showing that, for every graph~$G$ and every positive integer~$k$, either~$G$ contains~$k$ vertex-disjoint cycles with no edges between distinct cycles, or there is a set of $O(k\log k)$ vertices whose closed neighborhood meets every cycle of~$G$.
Dujmovi\'c, Joret, Micek, and Morin~\cite{DujmovicJMM} studied a more general distance variant of the \ep property.
They showed that, for every positive integer~$d$, either~$G$ contains~$k$ cycles that are pairwise at distance greater than~$d$, or there is a set~$S$ of $O(k^{18}\operatorname{polylog} k)$ vertices such that the ball of radius $19d$ around~$S$ meets every cycle of~$G$.
More recently, Chudnovsky et al.~\cite{ChudnovskyDJGKMMS2026} obtained a result with a hitting set of size $O(k\log k)$ and radius $O(d)$.
Thus, the results of Ahn et al.
and Chudnovsky et al.
provide closely related coarse analogues of the classical \ep theorem, corresponding respectively to induced packings and to packings of cycles that are far apart.
Moreover, both results have been extended to cycles of prescribed minimum length in their papers.

One natural direction for extending these results is to identify other families of graphs that admit an induced packing analogue of the \ep theorem beyond the family of cycles.
Two particularly notable recent developments are results for planar~$H$-minors by Chudnovsky, Reinald, and Thomass\'e~\cite{ChudnovskyRT2026}, and for long holes and theta graphs by Czy\.{z}ewska et al.~\cite{CzyzewskaMPRR2026}.
These results demonstrate that induced \ep phenomena extend to substantially broader settings.

Our main result is an induced packing \ep theorem for even cycles, together with a polynomial-time algorithm that finds the corresponding packing or hitting set.

\begin{theorem}\label{thm:main}
    For every graph~$G$ and every positive integer~$k$, $G$ contains either an induced packing of~$k$ even cycles, or a set of $O(k^2\log k)$ vertices whose closed neighborhood meets every even cycle.
    Moreover, there is an algorithm running in time polynomial in $\abs{V(G)}$ and~$k$ that outputs one of these two outcomes.
\end{theorem}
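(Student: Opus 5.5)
We argue by induction on $k$ and follow the strategy of Ahn, Gollin, Huynh, and Kwon~\cite{coarseEP} for cycles, now restricted to even cycles, paying an extra factor of $k$ to handle parity. For each $k$ we want a polynomial-time procedure that, given $G$, either outputs an induced packing of $k$ even cycles or a set $X$ with $\abs{X}\le f(k)$ such that $N[X]$ meets every even cycle. The target is a recursion $f(k)\le f(k-1)+O(k\log k)$, which gives $f(k)=O(k^2\log k)$.

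\textbf{Step 1: a shortest even hole.} If $G$ has no even cycle, output $X=\emptyset$. Otherwise, any even cycle with a chord contains a shorter even cycle or a theta. A theta always contains an even cycle, since two of its three paths have the same parity. So we can find in polynomial time an even cycle $C$ of minimum length, or more carefully an even cycle that is minimal with respect to a suitable potential. Remove $N[V(C)]$ and apply induction with $k-1$ to $G-N[V(C)]$. A packing of $k-1$ even cycles there, together with $C$, is an induced packing of $k$ even cycles in $G$. Otherwise we obtain $X'$ of size $f(k-1)$ hitting, via its closed neighbourhood, all even cycles avoiding $N[V(C)]$.

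\textbf{Step 2: hitting even cycles through $N[V(C)]$.} It remains to handle even cycles that meet $N[V(C)]$ and avoid $N[X']$. If $\abs{C}$ is small, say $O(k\log k)$, we add $V(C)$ itself to $X$; then $N[V(C)]$ is covered and we are done. So assume $C$ is long. The key lemma needed is a \emph{long-cycle dichotomy}. If a shortest even cycle $C$ has length $\ge ck\log k$, then either $G$ has an induced packing of $k$ even cycles, or there is a set of $O(k\log k)$ vertices whose closed neighbourhood meets every even cycle. The natural route is to observe that when all even cycles are long, the graph is locally ``tree-like with respect to even cycles'': any two vertices at small distance are joined by essentially unique shortest paths of each parity. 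We would then use a Simonovits-type argument, as in the $O(k\log k)$ bound for cycles. Take a maximal induced packing of even cycles; then contract to an auxiliary multigraph of minimum degree $3$ whose cycles correspond to cycles of $G$. A graph of minimum degree $3$ on $n$ vertices has $\Omega(\log n)$-length cycles packing $\Omega(n/\log n)$ times. Vertex count bounded by $O(k\log k)$ then gives the hitting set.

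\textbf{Step 3: the parity issue (main obstacle).} The hardest step is controlling parity in this auxiliary-graph argument. Disjoint cycles in the auxiliary graph need not lift to \emph{even} cycles in $G$. To fix this, we would work with a group-labelled version over $\mathbb{Z}_2$, labelling each edge of the auxiliary graph by the parity of the path it represents. Then we use the known fact that for even cycles the \ep function is $O(k\log k)$ (Thomassen / Reed-type results on cycles of length $0 \bmod 2$). This avoids the odd-cycle obstructions, because even cycles, unlike odd cycles, have the \ep property. Concretely, we would try to show that the auxiliary graph, which has $O(k)$ ``branch'' pieces, contains $k$ vertex-disjoint even cycles far apart, which lift to an induced packing, or a small hitting set. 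The remaining difficulty is ensuring that lifted cycles are induced, with no edges between them. For this we would use the long-girth assumption on even cycles: an edge between two lifted cycles would create a short even cycle, contradicting minimality, possibly together with a theta argument. This is why only the closed neighbourhood of $X$ is required to hit. Each round thus costs $O(k\log k)$ vertices, and since the induction has $k$ levels, the total is $O(k^2\log k)$. Every step, namely finding shortest even cycles, building the auxiliary graph, and extracting packings, runs in polynomial time.
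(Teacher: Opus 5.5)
Your outer reduction is sound and is essentially the paper's. You peel off a shortest even cycle $C$, recurse on $G-N[V(C)]$ with $k-1$, and then work in $G''=G-N_{G-N[V(C)]}[X']$. There $C$ is still a shortest even cycle and $G''-N[V(C)]$ has no even cycle, so $f(k)\le f(k-1)+O(k\log k)$ follows once you have an $O(k\log k)$ dichotomy in that restricted setting. Disposing of a short $C$ by putting $V(C)$ into $X$ is also correct. The gap is that this dichotomy, your ``long-cycle dichotomy'' and the paper's Theorem~\ref{thm:main2}, carries the whole content of the theorem, and you only indicate a direction for it. You never state or use the hypothesis that $G''-N[V(C)]$ has no even cycle, yet the paper's argument depends on it. It enters Proposition~\ref{prop:boundingchord} through Lemma~\ref{lem:nestednew}, and Lemma~\ref{lem:singlebridge}, where it forces every remaining even cycle to have a neighbour in the constructed structure. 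Without that hypothesis your lemma is an $O(k\log k)$ version of the full theorem for graphs of large even girth.

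The concrete steps you sketch do not go through.

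\emph{Tree-likeness.} Large even girth does not make $G$ locally tree-like, because short odd cycles are unrestricted. A bowtie has no even cycle, yet two non-central vertices in different triangles are joined by two distinct paths of length~$3$. The interaction of such odd cycles, i.e.\ parity-breaking paths, is exactly what the paper controls with skeletons and path-matching graphs.

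\emph{Hitting set.} Your auxiliary-multigraph step comes with no maximality argument showing that the closed neighbourhood of $O(k\log k)$ branch vertices meets every even cycle. In the paper this needs the maximal bipartite ear decomposition, the maximal parity-breaking extension, the removal of crossing and nested pairs, and the analysis of bridges with at most two attachments.

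\emph{Inducedness.} This is the most serious failure. The ordinary Erd\H{o}s--P\'osa property of even cycles yields only vertex-disjoint even cycles, and your argument for inducedness is false. A single edge between two vertex-disjoint cycles lies on no cycle of their union, so no girth condition can forbid it. For example, two long even cycles joined by one edge have large even girth and two disjoint even cycles, but no induced packing of two even cycles. The paper obtains anticompleteness from specific structure:
\begin{enumerate}
\item chords of a skeleton form a parity-breaking matching, and $N_G(V(C'))\cap V(H)$ is deleted around each extracted short cycle $C'$ (Lemma~\ref{lem:manybranching});
\item a $10$-degeneracy argument handles non-matching indices (Lemma~\ref{lem:nonmatching});
\item Lemma~\ref{lem:paritybreaking}, via Lemma~\ref{lem:inducedHstar}, makes $\widehat{H}$ an induced subgraph, so that even-$N$ cycles lift to an induced packing.
\end{enumerate}
As it stands, your proposal proves the reduction but not the theorem.
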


We point out that the two recent results mentioned above are closely related to our work.
It is well known that every $\theta$-minor contains an even cycle, where the $\theta$ graph consists of two vertices with three parallel edges.
By applying the result of Chudnovsky, Reinald, and Thomass\'e~\cite{ChudnovskyRT2026} to the $\theta$ graph, and then greedily packing even cycles in $\theta$-minor-free graphs by using the cactus structure, one can obtain a qualitative induced packing \ep theorem for even cycles.
However, their result does not provide an explicit bound on the size of the hitting set and is not algorithmic.

The result of Czy\.zewska et al.~\cite{CzyzewskaMPRR2026} establishes an induced packing \ep theorem for $\theta$ induced minors.
Although their theorem does not directly apply to $\theta$-minors, their proof strategy appears to yield an induced packing \ep theorem for even cycles with gap function $O(k\log k)$.
However, this approach is again non-algorithmic. 

In contrast to these results, our proof is algorithmic.
We believe that, with a more refined analysis, the bound could likely be improved to $O(k\log k)$.
However, we chose to present the current argument, as it gives a cleaner and more transparent proof. 

The \emph{tree-independence number} is a tree-decomposition parameter
introduced by Dallard, Milani\v{c}, and \v{S}torgel~\cite{DallardMS2024-2}
that has found several applications to algorithms for independent-set
problems.  As an application of our main theorem, we show that every
$K_{1,t}$-free graph containing no induced packing of $k$ even cycles has
tree-independence number $O(tk^2\log k)$, and construct a tree decomposition
witnessing this bound in polynomial time.

A natural further direction is to consider even cycles that are farther apart. More precisely, one may ask whether there are functions $f$ and $g$ such that every graph contains either $k$ even cycles that are pairwise at distance greater than $d$, or a set $X$ of at most $f(k,d)$ vertices such that the ball of radius $g(d)$ around $X$ intersects every even cycle. Our result can be viewed as the case $d=1$. More broadly, it would also be interesting to generalize the theorem of Chudnovsky, Reinald, and Thomass\'e~\cite{ChudnovskyRT2026} for planar $H$-minors to the far-apart setting; a related question was also raised
by Albrechtsen and Davies~\cite{AlbrechtsenD2025}.

\paragraph*{Sketch of the proof.}

We begin with a standard peeling argument.
Repeatedly, we find a shortest even cycle~$C$, remove its closed neighborhood, and continue in the remaining graph.
If this process finds~$k$ cycles, then these cycles form an induced packing, and we are done.
Thus, we may assume that the process terminates after at most $k-1$ iterations.
It therefore suffices to consider the following restricted setting: $G$ contains a shortest even cycle~$C$ such that $G-N_G[C]$ contains no even cycle, where $N_G[C]$ denotes the closed neighborhood of the vertex set of~$C$.
In this restricted setting, our goal is to find either an induced packing of~$k$ even cycles, or a set~$X$ of $O(k\log k)$ vertices such that $G-N_G[X]$ contains no even cycle.
The statement is written in Theorem~\ref{thm:main2}.

Our proof introduces two structures, called skeletons and path-matching graphs, which may be of independent interest. 

A \emph{skeleton} $H$ in a graph~$G$ is a bipartite subgraph in which every vertex has degree between~$2$ and~$4$, all chords of~$H$ in~$G$ are parity-breaking and form a matching, no chord of~$H$ has both endpoints at branching vertices of~$H$, and no vertex of degree~$4$ in~$H$ is incident with a chord.
Here, a \emph{chord} means an edge in $E(G)\setminus E(H)$ connecting two vertices of~$H$.

\begin{figure}[t]
    \centering
    \begin{tikzpicture}
    \tikzstyle{R}=[circle, draw, fill=black!50, opacity=0, inner sep=0pt,minimum width=0pt]
    \tikzstyle{v}=[circle,draw,fill=black!50,inner sep=0pt,minimum width=4pt]

    \path[use as bounding box] (-4.5,-0.2) rectangle (1.6,4.8);
    
    \draw(-2.25,4.7) node [label=below:$H$]{};
    \draw[rounded corners, thick] (-2,0) -- (-1.5,0) -- (-1.5,4) --(-3,4)--(-3,0)--(-2,0);
    \draw (-2.25, 3) node [v] (v)[]{};
    \draw (-2.25, 1) node [v] (w) []{};
    \draw (-2.25,2) node [v] (a) [label=left:$v$]{};
    \draw[thick, postaction={decorate}, 
    decoration={markings,
    mark =at position 0.4 with {\coordinate (c) at (0,0);},
    mark =at position 0.25 with {\coordinate (b) at (0,0);},
    mark =at position 0.5 with {\coordinate (P) at (0,0);},
    mark =at position 0.55 with {\coordinate (d) at (0,0);},
    mark =at position 0.75 with {\coordinate (e) at (0,0);}
    }] 
    (v) to[out=-0.5, in=0.5, looseness=5] (w);

    \draw[dashed, thick] (a)--(b);
    \draw[dashed, thick] (a)--(c);
    \draw[dashed, thick] (a)--(d);
    \draw[dashed, thick] (a)--(e);
    
    \draw (b) node[v] []{};
    \draw (c) node[v] []{};
    \draw (d) node[v] []{};
    \draw (e) node[v] []{};
    \draw (P) node[] [label = right: $P$]{};
    \end{tikzpicture}
    \caption{Many chords (dashed edges) may appear when adding a parity-preserving~$F$-path.}
    \label{fig:manychords}
\end{figure}

In the first phase of the proof, we recursively construct bipartite subgraphs of low degree while controlling their chords, with the goal of eventually obtaining a skeleton with many branching vertices.
For a subgraph~$F$ of~$G$, an \emph{$F$-path} in~$G$ is a path of length at least~$1$ whose endpoints are in $V(F)$, whose internal vertices are not in $V(F)$, and which is not an edge of~$F$.
Starting from the given cycle~$C$, we recursively build a bipartite subgraph by attaching a parity-preserving $F$-path to the previously constructed graph~$F$ at each step.
In Section~\ref{sec:extendable}, we analyze how chords can appear when extending such bipartite subgraphs, and we formally introduce this recursive construction in Subsection~\ref{subsec:bipartiteear}.
Note that we cannot simply bound the number of new chords introduced at each step.
For example, a vertex~$v$ in the previously constructed graph~$F$ may form chords with several vertices of a newly added~$F$-path, see Figure~\ref{fig:manychords} for an example.
However, we can show that the subgraph with all such chords admits a vertex cover of constant size; see Proposition~\ref{prop:boundingchord} for details.
The assumption that $G-N_G[C]$ contains no even cycle is crucial for this proposition, as without it such a constant bound need not hold.
We use these vertex covers to simplify the resulting structure so that the remaining chords form a matching. 

In Lemma~\ref{lem:manybranching}, we show that if a graph~$G$ has a skeleton with at least $208\cdot k\log k$ branching vertices, then~$G$ contains an induced packing of~$k$ even cycles.
Therefore, we may assume that the first (maximal) bipartite subgraph we construct, say $H'$, has $O(k\log k)$ branching vertices.
We can also obtain a set~$S'$ with $\abs{S'}=O(k\log k)$ such that $H'-S'$ has no chords and every branching vertices of~$H'$ is contained in~$S'$ (the sets~$H'$ and~$S'$ correspond to~$H_m$ and $\mathcal{S}_m$ respectively, in Subsection~\ref{subsec:bipartiteear}).
By the maximality of the construction, there is no parity-preserving $H'$-path in $G-N_{G-(V(H')\setminus S')}[S']$ that extends~$H'$.
However, parity-breaking paths may still exist, and several of them may combine to form an even cycle.

Path-matching graphs provide a convenient framework for describing the next step of the proof.
A graph~$J$ is a \emph{path-matching} graph if $J=(P_1\cup\cdots\cup P_m) + N$ where $P_1,\dots,P_m$ are pairwise vertex-disjoint paths and~$N$ is a matching on $V(J)$ such that
$N\cap \bigcup_{i\in[m]}E(P_i)=\emptyset$.
We call edges of~$N$ \emph{links}.
In our application, the paths $P_1,\dots,P_m$ correspond to the components of $H'-S'$, while the links of~$N$ represent parity-breaking paths.

\begin{figure}[t]
    \centering
    \begin{tikzpicture}[x=1cm,y=1cm,scale=0.8]
    \tikzset{
        base/.style={thick},
        link/.style={very thick, magenta}
    }

    \draw[base] (0,4) -- (12,4)
        node[right] {$P_1$};
    \draw[base] (0,2) -- (12,2)
        node[right] {$P_2$};
    \draw[base] (0,0) -- (12,0)
        node[right] {$P_3$};

    \draw[link]
        (4.5,4)--(4.6,2)
        node[pos=0.3,left=2pt] {$a$};

    \draw[link]
        (3.6,2)
        to[out=65,in=115,looseness=1.2]
        node[pos=0.7,above right] {$b$}
        (6.0,2);

    \draw[link]
        (7.6,2)
        to[out=60,in=120,looseness=1.2]
        node[pos=0.7,above right] {$c$}
        (10.0,2);

    \draw[link]
        (2.3,2)-- (2.4,0)
        node[pos=0.3,left=2pt] {$d$};

    \draw[link]
        (3.4,0)
        to[out=65,in=115,looseness=1.2]
        node[pos=0.7,above right] {$e$}
        (5.9,0);

    \draw[link]
        (7.2,0)
        to[out=60,in=120,looseness=1.2]
        node[pos=0.7,above right] {$f$}
        (9.8,0);

    \end{tikzpicture}
    \caption{An illustration of serial links in a path-matching graph.
    We say that~$b$, $c$ are serial, and $e$, $f$ are serial.
    In particular, we say that~$e$ and~$f$ are consecutive serial, because the minimal subpath of $P_3$ containing all the endpoints of~$e$ and~$f$ does not meet any other matching edge.
    We construct the packing iteratively so that no two links are consecutive serial.}
    \label{fig:consecutive-serial}
\end{figure}

A natural idea is to greedily pack parity-breaking paths and show that, if sufficiently many such paths are found, then one can obtain an induced packing of~$k$ even cycles.
However, this is not true in general.
As illustrated in Figure~\ref{fig:consecutive-serial}, we may pack many parity-breaking paths whose corresponding links are consecutive serial, meaning that they appear consecutively with no other parity-breaking path intersecting the subpath containing their endpoints.
To avoid such configurations, we recursively pack either a path linking two components of $H'-S'$, or a pair of paths that cross or are nested. 

In Proposition~\ref{prop:many-links}, we show that if
$J=(P_1\cup\cdots\cup P_m)+N$
is a path-matching graph in which no two links of~$N$ are consecutive serial, and
$\abs{N}>11m+23g_\theta(k)$,
then~$J$ contains~$k$ pairwise vertex-disjoint cycles, each containing an even number of edges of~$N$.
Here, $g_\theta$ denotes a gap function for the ordinary \ep property of $\theta$-minor models, obtained by Chatzidimitriou, Raymond, Sau, and Thilikos~\cite{ChatRST2018}.
By adapting this result to our setting, we can conclude that the process of adding parity-breaking paths terminates after $O(k\log k)$ steps.

After removing the endpoints of the added parity-breaking paths together with their closed neighborhoods, the remaining graph has a restricted structure.
Let $H''$ be the graph obtained from $H'-S'$ after these deletions.
Then every remaining even cycle meets $H''$ in at most one vertex.
Since we assumed that $G-N_G[C]$ contains no even cycle, every such cycle has an edge to~$H''$ or contains a vertex of~$C$.
By greedily selecting such cycles, we can either obtain an induced packing of~$k$ even cycles, or find a small set whose closed neighborhood meets every remaining even cycle.
This completes the combinatorial part of the proof of Theorem~\ref{thm:main}.

To make the proof algorithmic, we need a polynomial-time algorithm for finding a shortest even cycle, as well as one for finding a shortest $(a,b)$-path of prescribed parity.
A shortest even cycle in a graph can be found in $O(|V(G)|^2)$ time by the algorithm of Yuster and Zwick~\cite{YusterZ1997}.
A shortest $(a,b)$-path of prescribed parity can be found in $O(|V(G)|^2)$ time by the algorithm of Derigs~\cite{Derigs1985}.
Using these algorithms, we can construct both the initial bipartite subgraph and the subsequent parity-breaking paths in polynomial time.

The procedure for finding an induced packing of even cycles from a skeleton is recursive.
We either find a shortest even cycle and recurse on the remaining structure, or, when a branching path meets many chords, first modify the skeleton by the appropriate reduction and then recurse.
This modification is given by $\phi(H,R)$ in Section~\ref{sec:skeleton}.
Both steps can be implemented in polynomial time.
The procedure for finding vertex-disjoint cycles containing an even number of edges of~$N$ in a path-matching graph relies on the algorithm for packing or covering $\theta$-minor models, see Theorem~\ref{thm:thetaminor}.
The remainder of the argument is a simple greedy process.
A detailed description is given in Subsection~\ref{subsec:mainproof}.

\paragraph*{Organization} 

In Section~\ref{sec:prelim}, we introduce basic terminology and notation.
In Section~\ref{sec:extendable}, we analyze how chords can arise when a bipartite subgraph is extended by a parity-preserving path.
In Section~\ref{sec:skeleton}, we introduce skeletons and show that every skeleton with sufficiently many branching vertices contains an induced packing of~$k$ even cycles.
In Section~\ref{sec:pathmatching}, we introduce path-matching graphs and show that every path-matching graph with sufficiently many matching edges, subject to a certain condition, contains~$k$ vertex-disjoint cycles, each containing an even number of matching edges.
Finally, in Section~\ref{sec:reduction}, we introduce the ear decompositions needed for the proof and prove the main theorem.

\section{Preliminaries}\label{sec:prelim}

In this paper, all graphs have neither loops nor multiple edges.
All logarithms are taken to base~$2$.
For a positive integer $n$, we write $[n]=\{1,\ldots,n\}$.

Let~$G$ be a graph.
We write $V(G)$ and $E(G)$ for the vertex set and edge set of~$G$, respectively.
For a set~$S$ of vertices in~$G$, let $G[S]$ denote the subgraph of~$G$ induced by~$S$.
We denote by $G-X$ the graph $G[V(G)\setminus X]$.
For a vertex~$v$ of~$G$, we denote by~$N_G(v)$ the set of neighbors of~$v$ in~$G$ and define~$N_G[v]=N_G(v)\cup\{v\}$.
For a set $X\subseteq V(G)$, let $N_G[X]=\bigcup_{v\in X}N_G[v]$ and let $N_G(X)=N_G[X]\setminus X$.
We say that $N_G[X]$ is the \emph{closed neighborhood} of~$X$ in~$G$.
For a subgraph~$H$ of~$G$, we also write $N_G[H]=N_G[V(H)]$.
For $v\in V(G)$, we write $\deg_G(v)=|N_G(v)|$, and the \emph{average degree} of~$G$ is
$\big(\sum_{v\in V(G)}\deg_G(v)\big)/\abs{V(G)}$.

For two graphs~$G$ and~$H$, their union is $G\cup H=(V(G)\cup V(H), E(G)\cup E(H))$ and their intersection is $G\cap H=(V(G)\cap V(H), E(G)\cap E(H))$.
For a graph~$G$ and a set~$F$ of unordered pairs of distinct vertices of~$G$, let $G+F$ be the graph $(V(G), E(G)\cup F)$.
A graph~$G$ is called \emph{$H$-subgraph-free} if~$G$ contains no subgraph isomorphic to~$H$.

For a path $P=v_1v_2 \cdots v_q$, the vertices~$v_1$ and~$v_q$ are called its \emph{endpoints} and other vertices are called \emph{internal vertices}.
We also say that~$P$ is a $(v_1,v_q)$-path.
Let $\internal(P)$ denote the subgraph of~$P$ induced by the internal vertices of~$P$.
We say that paths~$P$ and~$Q$ are \emph{internally disjoint} if $\internal(P)$ and $\internal(Q)$ are disjoint.
For two vertices~$v$ and~$w$ in~$P$, let $vPw$ denote the subpath of~$P$ whose endpoints are~$v$ and~$w$.
For two vertices~$v$ and~$w$ in~$G$, let $\dist_G(v,w)$ denote the \emph{distance} between~$v$ and~$w$ in~$G$, that is, the length of a shortest $(v,w)$-path of~$G$.
For a subgraph~$H$ of~$G$, an \emph{$H$-path} in~$G$ is a path of length at least~$1$ whose endpoints are in $V(H)$, whose internal vertices are not in $V(H)$, and which is not an edge of~$H$.

For a positive integer $\ell$, let $V_{\geq \ell}(G)$ denote the set of all vertices of degree at least $\ell$ in~$G$, and let $V_{\ell}(G)$ denote the set of all vertices of degree exactly $\ell$ in~$G$.
We say that a graph is a \emph{$(2,4)$-graph} if every vertex of~$G$ has degree~$2$, $3$, or~$4$.
A \emph{corridor} of a $(2,4)$-graph~$G$ is either a cycle of~$G$ containing at most one vertex of $V_{\geq 3}(G)$, or a path in~$G$ of length at least~$1$ whose endpoints are in $V_{\geq 3}(G)$ and internal vertices are not in $V_{\geq 3}(G)$.

A set~$F$ of edges in~$G$ is a \emph{matching} if no two edges in~$F$ share a vertex.
A set~$S$ of vertices in~$G$ is a \emph{vertex cover} if $G-S$ has no edges.
Let~$H$ be an induced subgraph of a graph~$G$.
For a component~$K$ of $G-V(H)$, the \emph{$H$-bridge} of~$G$
corresponding to~$K$ is the subgraph consisting of~$K$, all edges
between~$K$ and~$H$, and their endpoints in~$H$.
The vertices of an~$H$-bridge that belong to~$H$ are called its
\emph{attachments}.

For a subgraph~$H$ of a graph~$G$, an edge in $G[V(H)]$ that is not in~$H$ is called a \emph{chord} of~$H$ in~$G$.
If~$e$ is a chord of~$H$ in~$G$, we also say that~$H$ has a chord~$e$ in~$G$.

Let~$H$ be a bipartite subgraph of~$G$ with bipartition $(A,B)$.
A function $\beta:V(H) \to \{0,1\}$ defined by
        \[\beta(z) = \begin{cases}
               0 & z \in A \\
               1 & z \in B
           \end{cases}\]
is a \emph{bipartition indicator} of~$H$.
An~$H$-path~$P$ in~$G$ is called an \emph{$H$-extendable} path in~$G$ if $H\cup P$ is bipartite. 
We also say that such a path~$P$ is \emph{parity-preserving} for~$H$.
If $H\cup P$ is not bipartite, then we say that~$P$ is \emph{parity-breaking} for~$H$.
Also, a chord of~$H$ in~$G$ is \emph{parity-preserving} for~$H$ if $(V(H), E(H)\cup \{e\})$ is bipartite, and it is \emph{parity-breaking} otherwise.

\begin{lemma}\label{lem:indicatornot}
     Let~$G$ be a graph, and~$H$ be a connected bipartite subgraph of~$G$ with bipartition indicator~$\beta$.
Let~$P$ be an~$H$-path in~$G$ with endpoints~$v$ and~$w$.
Then~$P$ is~$H$-extendable if and only if $\abs{E(P)} \equiv \beta(v) + \beta(w) \pmod{2}$.
\end{lemma}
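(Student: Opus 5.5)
The plan is to use the basic fact that in a connected bipartite graph the parity of any walk between two vertices is determined by their sides, and then to check the two directions of the equivalence separately.

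\textbf{Parity of walks in $H$.} Every edge of $H$ joins $A$ to $B$, so along any walk in $H$ the value of $\beta$ flips at each step. Hence every $(v,w)$-walk in $H$, in particular every $(v,w)$-path, has length congruent to $\beta(v)+\beta(w)$ modulo $2$. Since $H$ is connected, such a path $Q$ exists. Note that $v\neq w$, since $P$ is a path of length at least~$1$.

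\textbf{Forward direction.} Suppose $H\cup P$ is bipartite. The subgraph $Q\cup P$ is a cycle, because $Q$ lies in $H$ and $P$ is internally disjoint from $V(H)$. The only degenerate case is $\abs{E(P)}=1$ with $Q$ the single edge $vw$; this cannot occur, because an $H$-path is by definition not an edge of $H$. Since $Q\cup P$ is a cycle in a bipartite graph, it is even. Therefore $\abs{E(P)}\equiv \abs{E(Q)} \equiv \beta(v)+\beta(w) \pmod 2$.

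\textbf{Backward direction.} Suppose $\abs{E(P)}\equiv\beta(v)+\beta(w)\pmod 2$. Extend $\beta$ to the internal vertices of $P=v=u_0u_1\cdots u_\ell=w$ by setting $\beta(u_i)=\beta(v)+i \bmod 2$. This assignment is consistent at $u_\ell=w$ precisely because of the parity assumption. Every edge of $H\cup P$ lies either in $H$ or on $P$, and internal vertices of $P$ are not in $V(H)$. Hence every edge of $H\cup P$ joins vertices with different $\beta$-values, so the extended $\beta$ is a proper $2$-colouring and $H\cup P$ is bipartite.

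There is no real obstacle here. The only points requiring care are the degenerate case in which $P$ is a single edge, where the definition of $H$-path excludes edges of $H$, and the observation that $P$ contributes no edges inside $V(H)$ other than its own.
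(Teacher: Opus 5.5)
Your proof is correct and essentially matches the paper's. The backward direction, alternately extending $\beta$ along $P$, is identical. In the forward direction you close $P$ into a cycle with a $(v,w)$-path $Q$ of $H$, whereas the paper observes that any bipartition indicator of $H\cup P$ restricts to $\beta$ or $1-\beta$ on the connected graph $H$ and then reads off the parity along $P$. This is an equivalent use of connectivity that only spares the degenerate single-edge check, which you handled correctly.
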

\begin{proof}
Suppose first that $P$ is $H$-extendable. Then $H\cup P$ is bipartite.
Since $H$ is connected, the restriction to $H$ of any bipartition
indicator of $H\cup P$ is either $\beta$ or $1-\beta$.
Along $P$, the bipartition indicator alternates at every edge.
Therefore,
$|E(P)|\equiv \beta(v)+\beta(w)\pmod 2$.

Conversely, suppose that
$|E(P)|\equiv \beta(v)+\beta(w)\pmod 2$.
Starting from $\beta(v)$, assign alternating values in $\{0,1\}$ to
the internal vertices of $P$ along $P$. The value assigned to $w$ is
$\beta(v)+|E(P)|\equiv \beta(w)\pmod 2$.
Thus, this assignment agrees with $\beta$ at both endpoints of $P$.
Since the internal vertices of $P$ do not belong to $H$, it extends
$\beta$ to a bipartition indicator of $H\cup P$. Hence $P$ is
$H$-extendable.
\end{proof}

We use the following lemma regarding the length of a shortest cycle in graphs of minimum degree at least~$3$.

\begin{lemma}[See Corollary 1.3.5 in~\cite{Diestel2025}]\label{lem:girth}
    If the minimum degree of a graph~$G$ is at least~$3$, then~$G$ has a cycle of length at most $2\log \abs{V(
    G)}$.%
\end{lemma}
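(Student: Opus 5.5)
We argue by breadth-first search from a vertex of minimum-degree graph structure. Let $n=\abs{V(G)}$; we may assume $G$ is finite with minimum degree at least~$3$. Pick any vertex $r$ and run a breadth-first search from~$r$, defining the layers $L_i=\{v : \dist_G(r,v)=i\}$ for $i\ge 0$. Let $t$ be the smallest index such that the subgraph $G[L_0\cup\cdots\cup L_t]$ is not a tree rooted at~$r$ with parent edges only. More precisely, let $t$ be the first depth at which some vertex of $L_t$ has an edge to a vertex of $L_{t-1}\cup L_t$ other than its BFS parent. If no such $t$ existed, $G$ would be a tree, contradicting minimum degree at least~$3$. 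So $t$ exists.

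When such an extra edge $xy$ appears with $x\in L_t$ and $y\in L_{t-1}\cup L_t$, we obtain a closed walk. It goes from $r$ to $x$ along tree paths, uses $xy$, and returns from $y$ to $r$. Its length is at most $2t+1$. Removing the common initial segment of the two tree paths gives a cycle of length at most $2t+1$.

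It remains to bound~$t$. For every $i<t$, the graph induced on $L_0\cup\dots\cup L_i$ is a tree in which every vertex of $L_j$ ($j<i$) has all its neighbors accounted for: one parent, the rest children. By the minimum degree condition, $r$ has at least $3$ children and every other vertex of $L_j$ with $j\le t-1$ has at least $2$ children. This holds because no extra edges exist at depth below~$t$, so each neighbor other than the parent lies in the next layer and has that vertex as its unique parent. Hence $\abs{L_i}\ge 3\cdot 2^{i-1}$ for $1\le i\le t$. These layers are pairwise disjoint, so
\[
n\ \ge\ 1+\sum_{i=1}^{t} 3\cdot 2^{i-1}\ =\ 3\cdot 2^{t}-2 \ \ge\ 2^{t+1}
\]
for $t\ge 1$. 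This gives $t\le \log n-1$, and the cycle has length at most $2t+1\le 2\log n-1\le 2\log n$.

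The only delicate point is the bookkeeping that the tree is genuinely branching up to depth $t-1$. One must check that a vertex of $L_{t-1}$ still has all its nonparent neighbors in $L_t$ as distinct children. If one of them were shared or were a same-layer neighbor, that would itself be an extra edge at depth $\le t$, and we simply take the first such occurrence as defining~$t$. So the choice of $t$ as the first depth with an extra edge (counting edges inside $L_t$ or between $L_t$ and $L_{t-1}$) makes the counting valid for all layers $L_1,\dots,L_t$.
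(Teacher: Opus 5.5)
Your overall strategy (BFS from $r$, the first depth $t$ carrying a non-tree edge, then a Moore-type count of the layers) is the standard argument behind the cited Corollary~1.3.5 of Diestel. However, the counting step fails at the last layer. Minimality of $t$ only forbids extra edges at depths strictly less than $t$. An extra edge at depth exactly $t$ is allowed; indeed one exists by definition. It may be an edge $xy$ with $x\in L_t$, $y\in L_{t-1}$ and $y$ not the BFS parent of $x$. Then $x$ is a neighbor of two vertices of $L_{t-1}$, so the non-parent neighbors of vertices of $L_{t-1}$ need not be distinct children, and $\abs{L_t}\ge 3\cdot 2^{t-1}$ does not follow. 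Your last paragraph says such a shared vertex ``would itself be an extra edge at depth $\le t$'' and is therefore excluded, but depth $t$ is not excluded. Concretely, in $K_{3,3}$ you get $t=2$, $\abs{L_1}=3$, $\abs{L_2}=2<6$, and $n=6<3\cdot 2^2-2$. In the Heawood graph you get $t=3$, $\abs{L_3}=4<12$, and $n=14<22$. So the inequality $n\ge 3\cdot 2^t-2$ is false in general, and your derivation of $2t+1\le 2\log n$ is not valid as written.

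The repair is a case distinction on the type of extra edge at depth $t$; this is the odd/even girth split in Diestel's Proposition~1.3.4.

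\emph{First case: some extra edge at depth $t$ goes from $L_t$ to $L_{t-1}$.} Then your cycle has length at most $2t$, since the two tree paths have lengths $t$ and $t-1$. Here only $L_0,\ldots,L_{t-1}$ need to branch fully, and that is justified: a shared child or a same-layer edge among layers of index at most $t-1$ would be an extra edge at depth at most $t-1$. This gives $\abs{L_i}\ge 3\cdot 2^{i-1}$ for $1\le i\le t-1$. Hence $n\ge 3\cdot 2^{t-1}-2+\abs{L_t}\ge 3\cdot 2^{t-1}-1\ge 2^t$, and so $2t\le 2\log n$.

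\emph{Second case: every extra edge at depth $t$ lies inside $L_t$.} Then each vertex of $L_{t-1}$ has all of its at least two non-parent neighbors in $L_t$ as its own children. Your bound $\abs{L_t}\ge 3\cdot 2^{t-1}$ is then valid, and your computation $2t+1\le 2\log n-1$ goes through unchanged.
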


\section{Extendable paths and chords}\label{sec:extendable}

When extending a bipartite subgraph by a shortest parity-preserving path, the new chords need not be few. The following proposition shows that they nevertheless have a vertex cover of constant size. Its two conclusions will be used separately: the first produces even cycles at vertices incident with several new chords, while the second bounds the number of vertices that must be set aside at each step of the ear construction.

\begin{proposition}\label{prop:boundingchord}
    Let~$G$ be a $C_4$-subgraph-free graph, and~$C$ be a shortest even cycle in~$G$ such that $G-N_G[C]$ has no even cycle.
Let~$H$ be a connected bipartite subgraph of~$G$ containing~$C$, and $S\subseteq V(H)$.
Let~$P$ be a shortest path among all~$H$-extendable paths contained in $G-N_{G-(V(H)\setminus S)}[S]$, and suppose that~$P$ has length at least~$3$.
Let~$M$ be the set of chords of $H\cup P$ in~$G$ having an endpoint in $\internal(P)$, $J=(V(H\cup P), M)$, and~$Z$ be the set of all vertices of degree~$1$ in~$J$ whose unique neighbor has also degree~$1$ in~$J$. 

    \begin{enumerate}[label=(\arabic*)]
        \item For every $w\in V_{\ge 2}(J)$, there exist two distinct vertices
$z_1,z_2\in V(P)\setminus \{w\}$ such that
$wz_1,wz_2\in M$ and $w\notin V(z_1Pz_2)$. 
    \item The set $V_{\ge 2}(J)\cup Z$ is a vertex cover of~$J$ of size at most $10$.
    \end{enumerate}
\end{proposition}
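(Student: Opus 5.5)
The plan is to extract, from every configuration of chords in $J$, either a shorter $H$-extendable path inside $G-N_{G-(V(H)\setminus S)}[S]$ (contradicting the minimality of $P$), or an even cycle in $G$ that is too short (contradicting the choice of $C$ or $C_4$-freeness), or an even cycle far from $C$ (contradicting that $G-N_G[C]$ has no even cycle). Write $P$ as a $(v,v')$-path with $v,v'\in V(H)$. The basic tool is Lemma~\ref{lem:indicatornot}.

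First I classify the chords. A chord between $w\in V(H)$ and an internal vertex $z$ of $P$ splits $P$ into the $H$-paths $vPz+zw$ and $wz+zPv'$. Both are strictly shorter than $P$, since $P$ has length at least~$3$, and both avoid the deleted set, since $P$ does and $w\in V(H)$. So neither is $H$-extendable. By Lemma~\ref{lem:indicatornot} this forces
\[
|E(vPz)|\equiv \beta(v)+\beta(w)\pmod 2 .
\]
Consequently, for two chords $wz_1,wz_2$ with $w\in V(H)$, the subpath $z_1Pz_2$ has even length. Then $w z_1 P z_2 w$ is an even cycle of length $|E(z_1Pz_2)|+2$, and since $w\in V(H)$ we automatically have $w\notin V(z_1Pz_2)$.

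Chords with both ends internal to $P$ are handled similarly. If the two ends are at odd distance along $P$, the chord shortcuts $P$ to a shorter extendable path, which is impossible. If they are at even distance, the chord closes an odd cycle with $P$. Two such chords at a common vertex then yield an even cycle, or an even-length $P$-segment avoiding that vertex. Proving~(1) for $w\in \internal(P)$ requires showing that $w$ has two chords into $V(P)$ lying on one side of $w$. I expect this to follow from minimality: a chord from $w$ back to $H$, combined with a chord from $w$ to the far part of $P$, produces a shorter $H$-path of the correct parity.

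For~(2), I bound $|V_{\ge 2}(J)|$ and $|Z|$ separately. Each $w\in V_{\ge 2}(J)$ yields, by~(1), an even cycle $D_w=wz_1Pz_2w$ that uses only one vertex outside $P$. Choosing $z_1,z_2$ consecutive among the chord-neighbours of $w$ on $P$ makes $D_w$ short. Now $D_w$ must meet $N_G[C]$, and it must have length at least $|C|$, which is at least~$6$ by $C_4$-freeness. I then compare $D_w$ with the shortest-path property: $P$ is a shortest path of its parity type, so any subpath of $P$ between two vertices that are close in $G$ (via $w$) cannot be long. This pins the chord-neighbours of each such $w$ into a short window of $P$. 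Moreover, two vertices of $V_{\ge 2}(J)$ whose windows are disjoint give two even cycles which, together with segments of $P$, allow re-routing $P$ into a strictly shorter extendable path. Hence only boundedly many such vertices exist.

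The set $Z$ consists of the endpoints of isolated edges of $J$. Here I would pair up isolated chords: two crossing or nested isolated chords, together with segments of $P$ and of $H$, form an even cycle or a parity-correct shortcut. A counting of such configurations then bounds the number of isolated edges by a small constant.

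I expect the main obstacle to be tuning these exchange arguments so that the total is at most $10$. In particular, I must make sure that every short-cycle contradiction uses the hypothesis that $G-N_G[C]$ is free of even cycles correctly: the even cycles produced must avoid $N_G[C]$, which needs the observation that $C\subseteq H$ while $P$ lies outside $N_{G-(V(H)\setminus S)}[S]$. Everything else is routine parity bookkeeping via Lemma~\ref{lem:indicatornot}.
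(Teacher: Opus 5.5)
\paragraph{Part (1) is essentially right; the gap is in (2).} Your identity $|E(vPz)|\equiv\beta(v)+\beta(w)$ is correct. Only one of the two split paths needs to be shorter than $P$: if $z$ is the second or penultimate vertex of $P$, one of them has the same length as $P$. Since the two paths are extendable simultaneously, this is harmless. You omit one case, an internal vertex $w$ with chords to two distinct $a_1,a_2\in V(H)\setminus V(P)$. It is excluded by comparing the identity with the length-$2$ path $a_1wa_2$.

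Your ``short window'' step in (2) is false. A vertex $w\in V(H)\setminus V(P)$ cannot be used to reroute $P$, because $vPz_1wz_2Pv'$ passes through $H$ and is not an $H$-path. Indeed such a $w$ can have arbitrarily many chords spread along $P$ (Figure~\ref{fig:manychords}). For $w\in\internal(P)$, two chords on the same side both span even segments, so again nothing shortens.

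What has to be bounded is a \emph{matching number}, together with a structural fact that converts it into a bound on $V_{\ge2}(J)\cup Z$.
\begin{itemize}
\item For chords from $V(H)\setminus V(P)$ to $\internal(P)$, your parity identity and the argument you sketch for (1) suffice. Either all such chords share one endpoint $a$, whose other ends then have degree $1$ in $J$ and so lie neither in $V_{\ge2}(J)$ nor in $Z$. Or they form a matching of at most two edges ending at the second and penultimate vertices of $P$. Either way this part contributes at most $4$ vertices.
\item For the chord graph $J_P$ inside $P$, crossing and internally disjoint pairs are killed by rerouting, so all pairs are nested. This makes $J_P$ a forest, and a K\H{o}nig-type count (Lemma~\ref{lem:forestvc}) gives $|V_{\ge2}(J_P)\cup Z_P|\le 2\nu(J_P)$, where $\nu$ is the matching number. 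Your windows argument has no substitute for this step.
\end{itemize}

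\paragraph{The missing idea is the bound $\nu(J_P)\le 3$.} This is where the hypothesis on $G-N_G[C]$ enters, and your account of that hypothesis is wrong. Vertex-disjoint nested chords admit no rerouting, since a path can use at most one of them. The even cycle formed by two of them is not contradictory either: it may be longer than $C$, and it need not avoid $N_G[C]$. Since $S$ is arbitrary, the endpoints of $P$ may lie on $C$ and internal vertices of $P$ may have chords to $C$; the observation you cite does not rule this out. So two nested chords near the ends of $P$ give no contradiction at all.

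The paper instead takes three pairwise disjoint nested chords $a_1b_1,a_2b_2,a_3b_3$ with all ends internal (Lemma~\ref{lem:nestednew}). The even cycle $D$ formed by $a_2b_2$, $a_3b_3$ and two segments of $P$ lies in $\internal(P)$, so some edge $xy$ joins $x\in V(C)$ to $y\in V(D)$.
\begin{itemize}
\item If $x\notin V(P)$, then $xy$ is a chord from $H$ landing on $a_2Pb_2$. This is excluded by Lemmas~\ref{lem:shortestHpath-chordandedge} and~\ref{lem:chordandedge}.
\item If $x\in V(P)$, then $x$ is an endpoint of $P$ and $xy$ crosses the outer chord $a_1b_1$. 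This is excluded by Lemma~\ref{lem:shortestHpath-chordsofpath}(2).
\end{itemize}
Adding at most one matching chord at an endpoint of $P$ gives $\nu(J_P)\le 3$, hence the total $6+4=10$. Without this argument, or a substitute for it, your plan does not produce any constant bound in (2).
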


\subsection{Chords in a shortest even cycle}
We first show that every shortest even cycle of length at least~$6$ in a graph has at most one chord.

 Two chords $vw$ and $xy$ of a cycle~$C$ \emph{cross} if for any path~$P$ from~$v$ to~$w$ in~$C$, $P$ has exactly one vertex of $\{x,y\}$.

\begin{lemma}\label{lem:onechord}
    Let~$C$ be a shortest even cycle in a graph~$G$.
If~$C$ has a chord $vw$, then the length of any path from~$v$ to~$w$ in~$C$ is even.
\end{lemma}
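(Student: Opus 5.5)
The plan is a direct parity argument by contradiction. Let $C$ have length $2n$ and suppose $C$ has a chord $vw$. The chord splits $C$ into two $(v,w)$-paths $P_1$ and $P_2$ in $C$, with $\abs{E(P_1)}+\abs{E(P_2)}=2n$. Hence the two lengths have the same parity. So it suffices to rule out the case that both are odd.

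Assume $\abs{E(P_1)}$ and $\abs{E(P_2)}$ are both odd. Consider the cycles $P_1+vw$ and $P_2+vw$. Each has length $\abs{E(P_i)}+1$, which is even. Each is a genuine cycle, since $vw$ is a chord: $v$ and $w$ are not adjacent on $C$, so $\abs{E(P_i)}\ge 2$, and oddness gives $\abs{E(P_i)}\ge 3$. Both $P_1$ and $P_2$ have length at least $1$, so $\abs{E(P_i)}\le 2n-1$. Therefore $\abs{E(P_i)}+1\le 2n$, with equality only if the other path has length $1$. That would make $vw$ an edge of $C$, contradicting that it is a chord.

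It follows that $P_1+vw$ is an even cycle of length strictly less than $2n$. This contradicts the choice of $C$ as a shortest even cycle, so both paths have even length.

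There is no real obstacle here. The only point needing care is the strictness of the length bound, and that comes precisely from the fact that a chord is not an edge of $C$.
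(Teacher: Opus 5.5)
Your proposal is correct and follows essentially the same argument as the paper. Both observe that the two $(v,w)$-paths in $C$ have the same parity. Both then note that if they were odd, each would have length at least~$3$, so that either path together with the chord $vw$ would be an even cycle shorter than $C$.
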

\begin{proof}
    Let $P_1$ and $P_2$ be the two paths from~$v$ to~$w$ in~$C$.
As~$C$ is even, $P_1$ and $P_2$ have the same parity.
Suppose for contradiction that each $P_i$ has odd length.
Since~$G$ has no multiple edge, $P_1$ and $P_2$ have length at least~$3$.
Thus, $P_2\cup vw$ is an even cycle shorter than~$C$, a contradiction. 
\end{proof}

\begin{lemma}\label{lem:shortestevencycle-chords}
    Let~$C$ be a shortest even cycle in a graph~$G$  that has length at least~$6$.
Then~$C$ has at most one chord.
\end{lemma}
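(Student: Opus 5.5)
The plan is to suppose that $C$ has two distinct chords $vw$ and $xy$ and to build an even cycle shorter than $C$, which contradicts minimality. Lemma~\ref{lem:onechord} gives the parity information needed: for each chord, both arcs of $C$ between its endpoints have even length, and hence length at least~$2$. I split according to whether the two chords cross.

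\emph{Non-crossing chords.} Here the endpoints appear around $C$ in the order $v,w,y,x$, with $w=y$ or $x=v$ possibly allowed, but not both, since the chords are distinct. Let $P_1$ be the $(v,w)$-arc of $C$ avoiding $x,y$ in its interior, and $Q_1$ the $(x,y)$-arc avoiding $v,w$. Let $R$ be the $(w,y)$-arc and $R'$ the $(x,v)$-arc, so that $C=P_1\cup R\cup Q_1\cup R'$. By Lemma~\ref{lem:onechord}, $\abs{E(P_1)}$ and $\abs{E(Q_1)}$ are even and at least~$2$. Since $\abs{E(C)}$ is even, $\abs{E(R)}+\abs{E(R')}$ is even. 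Then $vw\cup R\cup yx\cup R'$ is a cycle. If an endpoint is shared, it degenerates to a cycle through that vertex, which is still a genuine cycle because the chords are distinct. Its length is $2+\abs{E(R)}+\abs{E(R')}$, which is even and at most $\abs{E(C)}-4+2<\abs{E(C)}$. This is a contradiction.

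\emph{Crossing chords.} Now the four endpoints are distinct and appear in the order $v,x,w,y$. Let $a,b,c,d\ge 1$ be the lengths of the arcs $v\!-\!x$, $x\!-\!w$, $w\!-\!y$, $y\!-\!v$. Lemma~\ref{lem:onechord} applied to $vw$ gives $a+b$ even, and applied to $xy$ gives $b+c$ even. Together with the evenness of $a+b+c+d$, this shows that $a,b,c,d$ all have the same parity. There are two cycles of interest:
\begin{itemize}
\item the first uses $vw$, the arc $w\!-\!x$, the chord $xy$ and the arc $y\!-\!v$, and has length $2+b+d$;
\item the second uses $vw$, the arc $w\!-\!y$, the chord $yx$ and the arc $x\!-\!v$, and has length $2+a+c$.
\end{itemize}
Both lengths are even. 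The first is shorter than $C$ unless $a+c\le 2$, i.e.\ $a=c=1$, and the second is shorter unless $b=d=1$. So minimality of $C$ forces $a=b=c=d=1$, i.e.\ $\abs{E(C)}=4$, contradicting length at least~$6$. In the all-even parity case this is immediate, since every arc then has length at least~$2$.

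The only delicate point is the bookkeeping in the degenerate non-crossing case with a shared endpoint. There I would check that the constructed closed walk is still a cycle and that the length bound survives, which is routine.
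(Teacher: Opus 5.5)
Your proof is correct and follows the paper's argument. It uses the same split into non-crossing and crossing chords, the same parity input from Lemma~\ref{lem:onechord}, and the same replacement cycles; the only cosmetic difference is that in the crossing case you bound each of the two cycles separately to force $a=b=c=d=1$, whereas the paper adds the two inequalities to get $2\abs{E(C)}\le \abs{E(C)}+4$.
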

\begin{proof}
    Suppose that there are two chords $e=xy$ and $f=vw$ of~$C$.
    We distinguish the case depending on whether they cross or not. %
    
    First, assume that~$e$ and~$f$ do not \emph{cross}.
Without loss of generality, we may assume that $y \neq w$, and that every path from~$y$ to~$w$ in~$C$ contains either both~$x$ and~$v$, or neither of them as internal vertices. 
    Let $Q_1, Q_2, Q_3, Q_4$ be the subpaths of~$C$ such that 
    \begin{itemize}
        \item the sets of endpoints of $Q_1, Q_2, Q_3, Q_4$ are $\{x,v\}, \{v,w\}, \{w,y\}, \{y,x\}$, respectively, and
        \item the internal vertices of $Q_i$ do not contain vertices of $\{x,v,y,w\}$.
    \end{itemize}

    By Lemma~\ref{lem:onechord}, we obtain $
        |E(Q_4)|\equiv 0\pmod{2}$ and $\abs{E(Q_2)}\equiv  0\pmod{2}$.
    Then $\abs{E(Q_1)}+|E(Q_3)|\equiv 0\pmod{2}$.
It follows that $Q_1\cup vw \cup Q_3 \cup yx$ is an even cycle.
    Since $\abs{\{xy, vw\}}<4\le |E(Q_4)|+|E(Q_2)|$, this cycle is shorter than~$C$, contradicting the assumption that~$C$ is shortest.

    Next, we consider the case when~$e$ and~$f$ cross. 
    Let $R_1, R_2, R_3, R_4$ be the subpaths of~$C$ such that 
    the sets of endpoints of $R_1, R_2, R_3, R_4$ are $\{x,v\}, \{v,y\}, \{y,w\}, \{w,x\}$, respectively, and
    the internal vertices of $R_i$ do not contain vertices of $\{x,v,y,w\}$.
    Since the length of~$C$ is at least~$6$, one of $R_1, R_2, R_3, R_4$, say~$R$, has length greater than~$1$. 
    Let $C_1=R_1\cup xy\cup R_3\cup wv$ and $C_2=R_4\cup wv\cup R_2\cup yx$.
    
    By Lemma~\ref{lem:onechord}, we have
    $\abs{E(R_3)} + |E(R_4)|\equiv |E(R_1)| + |E(R_2)|\equiv 0\pmod{2}$ and
    $\abs{E(R_4)} + |E(R_1)|\equiv |E(R_2)| + |E(R_3)|\equiv 0\pmod{2}$.
    Thus, we get 
    \begin{align*}
        |E(C_1)|\equiv |E(R_1)|+|E(R_3)|\equiv  (|E(R_1)| + |E(R_4)|) + (|E(R_4)| + |E(R_3)|) 
        \equiv 0\pmod{2}, \\
        |E(C_2)|\equiv |E(R_4)|+|E(R_2)|\equiv (|E(R_4)| + |E(R_3)|) + (|E(R_2)| + |E(R_3)|) \equiv 0\pmod{2}.
    \end{align*}
    This implies that $C_1$ and $C_2$ are even cycles. 
    Since~$C$ is a shortest even cycle, 
    $\abs{E(C)}\le |E(C_i)|$ for each $i\in [2]$.
Thus, $2|E(C)|\le |E(C_1)|+|E(C_2)|=|E(C)|+4$ and it implies that $\abs{E(C)}\le 4$.
This contradicts the assumption that~$C$ has length at least~$6$.
\end{proof}

    \subsection{Chords whose endpoints are in an~$H$-extendable path}
    In the rest of the section, we consider chords that arise by adding a new~$H$-extendable path.
We first analyze the chords whose endpoints are contained in an~$H$-extendable path.

    \begin{lemma}\label{lem:onechordforpath}
        Let~$G$ be a graph, and let~$H$ be a bipartite subgraph of~$G$ and $S \subseteq V(H)$.
Let~$P$ be a shortest path among all~$H$-extendable paths in $G-N_{G-(V(H)\setminus S)}[S]$.
If~$H\cup P$ has a chord $vw$ with $v,w\in V(P)$, then the subpath $vPw$ has even length.
    \end{lemma}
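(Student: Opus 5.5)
Suppose for contradiction that $vPw$ has odd length. I will shortcut $P$ along the chord $vw$ and obtain a strictly shorter $H$-extendable path, contradicting the minimality of $P$.

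First the setup. Let $P$ have endpoints $a$ and $b$, ordered so that $a, v, w, b$ appear in this order along $P$; possibly $a=v$ or $w=b$. Since $vw$ is a chord of $H\cup P$, it is not an edge of $P$, so $\abs{E(vPw)}\ge 2$. Under the odd-length assumption this gives $\abs{E(vPw)}\ge 3$. Define
\[ P' = aPv \cup vw \cup wPb. \]
Then $\abs{E(P')} = \abs{E(P)} - \abs{E(vPw)} + 1 < \abs{E(P)}$. Moreover $\abs{E(P')} \equiv \abs{E(P)} \pmod 2$, because we removed an odd number of edges and added one.

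Next I check that $P'$ is an $H$-path inside $G-N_{G-(V(H)\setminus S)}[S]$.
\begin{itemize}
\item $P'$ is a path with the same endpoints $a,b\in V(H)$.
\item Its internal vertices are internal vertices of $P$, so they avoid $V(H)$.
\item Its vertices and the edge $vw$ lie in the allowed graph, since $v,w\in V(P)$ and the allowed graph is an induced subgraph containing $P$.
\item $P'$ has length at least $1$.
\item $P'$ is not an edge of $H$. If $P'$ had length $1$, then $P'=vw$ with $v=a$ and $w=b$, and $vw$ is not in $E(H)$ because it is a chord of $H\cup P$.
\end{itemize}

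Then extendability. $P'$ has the same endpoints as $P$ and the same parity of length. Intuitively, $H\cup P'$ is bipartite exactly when $H\cup P$ is.

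The main subtlety is that Lemma~\ref{lem:indicatornot} assumes $H$ is connected, while the present statement does not. I would handle this directly rather than through that lemma. Take a proper $2$-coloring $c$ of $H\cup P$, and restrict it to $H$. Recolor the internal vertices of $P'$ alternately starting from $c(a)$. Because $\abs{E(P')}\equiv\abs{E(P)}$ and $P$ itself alternates from $c(a)$ to $c(b)$, this recoloring ends at $c(b)$ on $b$. The internal vertices of $P'$ lie outside $H$, so the result is a proper $2$-coloring of $H\cup P'$. Hence $P'$ is $H$-extendable.

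This contradicts the choice of $P$ as a shortest such path, so $vPw$ has even length.
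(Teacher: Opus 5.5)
Your proof is correct and matches the paper's argument: if $vPw$ were odd (hence of length at least~$3$), shortcutting $P$ along the chord $vw$ gives a strictly shorter $H$-extendable path with the same endpoints and the same parity, contradicting minimality. Your explicit recoloring of the internal vertices of the shortened path also covers the case where $H$ is not connected, a point the paper's one-line proof leaves implicit.
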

    \begin{proof}
    Suppose that~$H\cup P$ has a chord $vw$ with $v,w\in V(P)$.
Since~$G$ has no multiple edges, the subpath $vPw$ has length at least~$2$.
If $vPw$ has odd length, then replacing $vPw$ with the edge $vw$ gives an~$H$-extendable path shorter than~$P$, a contradiction.
Thus $vPw$ has even length.
    \end{proof}

    \begin{lemma}\label{lem:shortestHpath-chordsofpath}
    Let~$G$ be a $C_4$-subgraph-free graph and let~$H$ be a connected bipartite subgraph of~$G$ and $S\subseteq V(H)$.
Let~$P$ be a shortest path among~$H$-extendable paths in $G-N_{G-(V(H)\setminus S)}[S]$.
The following hold.
    \begin{enumerate}[label=(\arabic*)]
        \item~$P$ has no two chords $ab$ and $cd$ where $aPb$ and $cPd$ are internally disjoint.
        \item~$P$ has no two chords $ab$ and $cd$ such that one vertex of $\{c,d\}$ is an internal vertex of $aPb$ and the other is not a vertex of $aPb$.
    \end{enumerate}
    \end{lemma}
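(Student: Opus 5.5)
The plan is to argue by contradiction in both parts: from two such chords I will build a strictly shorter $H$-extendable path in $G':=G-N_{G-(V(H)\setminus S)}[S]$, contradicting the choice of $P$. Three facts make this work.

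First, any path obtained from $P$ by rerouting along chords with both ends on $P$ keeps the same endpoints in $V(H)$. Its internal vertices still lie in $V(P)\setminus V(H)$, and it stays inside $G'$, because chords between vertices of $P$ are edges of $G'$.

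Second, by Lemma~\ref{lem:indicatornot} (with $H$ connected), such a path is $H$-extendable as soon as its length has the same parity as $\abs{E(P)}$.

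Third, by Lemma~\ref{lem:onechordforpath}, every chord $ab$ with $a,b\in V(P)$ has $\abs{E(aPb)}$ even. Since $G$ has no multiple edges, $\abs{E(aPb)}\ge 2$.

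For (1), suppose $aPb$ and $cPd$ are internally disjoint; they may share an endpoint. Let $P'$ be obtained from $P$ by replacing $aPb$ with the edge $ab$ and $cPd$ with the edge $cd$. Internal disjointness makes $P'$ a path with the same endpoints as $P$. Each replacement changes the length by $\abs{E(aPb)}-1$, respectively $\abs{E(cPd)}-1$, which is odd and at least $1$. Hence $\abs{E(P')}<\abs{E(P)}$ and $\abs{E(P')}\equiv\abs{E(P)}\pmod 2$.

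It remains to check that $P'$ is not an edge of $H$. Because $P'$ uses two distinct chords, it has length at least $2$. So $P'$ is a shorter $H$-extendable path in $G'$, a contradiction.

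For (2), orient $P$ and, after renaming, assume the vertices appear in the order $a,c,b,d$, with $c$ internal to $aPb$ and $d\notin V(aPb)$. The mirror case is symmetric. Define $P'$ by following $P$ up to $a$, then taking $ab$, then traversing $bPc$ backwards, then taking $cd$, then following $P$ from $d$ on. This is a path. It deletes the segments $aPc$ and $bPd$ and adds two edges.

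From the parity facts $\abs{E(aPc)}+\abs{E(cPb)}\equiv 0$ and $\abs{E(cPb)}+\abs{E(bPd)}\equiv 0 \pmod 2$, we get $\abs{E(aPc)}\equiv\abs{E(bPd)} \pmod 2$. So the length change $\abs{E(aPc)}+\abs{E(bPd)}-2$ is even, and $P'$ has the parity of $P$. It is strictly shorter unless $\abs{E(aPc)}=\abs{E(bPd)}=1$.

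The main obstacle is this boundary case, and it is exactly where $C_4$-freeness enters. If $ac$ and $bd$ are edges of $P$, then $a,c,b,d$ are distinct and $acdba$ is a $4$-cycle in $G$, which is impossible. In every other case, $P'$ has length at least $3$ (it contains $ab$, $cd$ and $bPc$ of length at least $1$), so it is not an edge of $H$. Thus $P'$ contradicts the minimality of $P$, completing (2).
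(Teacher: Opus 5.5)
Your proof is correct and follows the paper's argument essentially step for step. In (1) you replace both segments by their chords. In (2) you reroute along $ab$, $bPc$, $cd$, using the even lengths of $aPb$ and $cPd$ from Lemma~\ref{lem:onechordforpath} for parity and $C_4$-freeness to exclude $\abs{E(aPc)}=\abs{E(bPd)}=1$.

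One small point, shared with the paper's proof: read literally as ``disjoint interiors'', internal disjointness also holds for the crossing order $a,c,b,d$ with $cb\in E(P)$, where the double replacement in (1) is not a path; that configuration is, however, exactly the one your argument for (2) rules out.
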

    \begin{proof}
        (1) Suppose that~$P$ has such chords.
See Figure~\ref{fig:chords2} for an illustration.
By Lemma~\ref{lem:onechordforpath}, we have $\abs{E(aPb)} \equiv |E(cPd)| \equiv 0 \pmod{2}$. 
        Then the path obtained from~$P$ by 
        replacing $aPb$ and $cPd$ with $ab$ and $cd$, respectively, has the same endpoints and the same parity as~$P$.
Thus, this path is a shorter~$H$-extendable path, which is a contradiction.

        \medskip
        (2) Without loss of generality, we may assume $aPb$ contains~$c$ as an internal vertex.
Let~$v$ and~$w$ be the endpoints of~$P$ so that $v,a,c,b,d,w$ appear in this order when we traverse from~$v$ to~$w$ in~$P$.
        
        By Lemma~\ref{lem:onechordforpath}, $\abs{E(aPb)} \equiv |E(cPd)| \equiv 0 \pmod{2}$.
It follows that
        $\abs{E(aPc)}+|E(bPd)| \equiv |E(aPb)| + |E(cPd)| \equiv 0 \pmod{2}$.
        Therefore, replacing $aPc$ and $bPd$ with the edges $ab$ and $cd$ preserves the parity.
        Let 
        \[
        P' = vPa \cup ab \cup bPc \cup cd \cup dPw.
        \]
        Then $P'$ is also an~$H$-extendable path.
Since~$G$ is $C_4$-subgraph-free, at least one of $aPc$ and $bPd$ has length greater than~$1$, and hence $P'$ is shorter than~$P$, a contradiction.
    \end{proof}

    \begin{figure}%
    \centering
    \begin{subfigure}{0.45\linewidth}
    \centering
    \begin{tikzpicture}
    \tikzstyle{B}=[circle,draw,fill=black!50, opacity=0, inner sep=0pt,minimum width=0pt]
    \draw (-2.15, 2.85) node [B] (V) {};
    \draw (-2.15, 1.15) node [B] (W) {};
      \draw[thick, blue, postaction={decorate}, 
      decoration={markings,
      mark =at position 0.2 with {\coordinate (A) at (0,0);},
      mark =at position 0.45 with {\coordinate (B) at (0,0);},
      mark =at position 0.55 with {\coordinate (C) at (0,0);},
      mark =at position 0.85 with {\coordinate (D) at (0,0);}
      }] 
      (V) to[out=-0.5, in=0.5, looseness=5.5] (W);

    \begin{scope}
        \clip (A) circle (1.48); 
        \clip (B) circle (1.35);
        \draw[line width = 3pt, white] (V) to[out=-0.5, in=0.5, looseness=5.5] (W);
    \end{scope}
    \begin{scope}
        \clip (C) circle (1.75); 
        \clip (D) circle (1.8); 
        \draw[line width = 3pt , white] (V) to[out=-0.5, in=0.5, looseness=5.5] (W);
    \end{scope}
    
    \draw[thick, blue] (A)--(B);
    \draw[thick, blue] (C)--(D);

    \tikzstyle{v}=[circle,draw,fill=black!50,inner sep=0pt,minimum width=4pt]
    \draw(-2.25,4.7) node [label=below:$H$]{};

    \draw[rounded corners, thick] (-2,0) -- (-1.5,0) -- (-1.5,4) --(-3,4)--(-3,0)--(-2,0);
      \draw (-2.15, 3) node [v] (v) {};
      \draw (-2.25, 1) node [v] (w) {};
      \draw[thick, postaction={decorate}, 
      decoration={markings,
      mark =at position 0.2 with {\coordinate (a) at (0,0);},
      mark =at position 0.45 with {\coordinate (b) at (0,0);},
      mark =at position 0.55 with {\coordinate (c) at (0,0);},
      mark =at position 0.85 with {\coordinate (d) at (0,0);}
      }] 
      (v) to[out=-0.5, in=0.5, looseness=5] (w);

    \draw[thick] (a)--(b);
    \draw[thick] (c)--(d);

    \draw (a) node[v] [label=above:$a$] {};
    \draw (b) node[v] [label=above:$b$] {};
    \draw (c) node[v] [label=below:$c$] {};
    \draw (d) node[v] [label=below:$d$] {};

\end{tikzpicture}
    \caption{The case when~$P$ has two internally disjoint chords. }
\end{subfigure}
\hfill
  \begin{subfigure}{0.45\linewidth}
  \centering
    \begin{tikzpicture}
    
    \tikzstyle{B}=[circle,draw,fill=black!50, opacity=0, inner sep=0pt,minimum width=0pt]
    \draw (-2.15, 3.17) node [B] (V) {};
      \draw (-2.15, 1.17) node [B] (W) {};
      \draw[thick, blue, postaction={decorate}, 
      decoration={markings,
      mark =at position 0.25 with {\coordinate (A) at (0,0);},
      mark =at position 0.6 with {\coordinate (B) at (0,0);},
      mark =at position 0.45 with {\coordinate (C) at (0,0);},
      mark =at position 0.8 with {\coordinate (D) at (0,0);}
      }] 
      (V) to[out=-0.5, in=0.5, looseness=5] (W);

    \begin{scope}
        \clip (A) circle (1.3); 
        \clip (B) circle (1.61);
        \draw[line width = 3pt, white] (V) to[out=-0.5, in=0.5, looseness=5] (W);
    \end{scope}
        \begin{scope}
        \clip (C) circle (1.91); 
        \clip (D) circle (1.29); 
        \draw[line width = 3pt , white] (V) to[out=-0.5, in=0.5, looseness=5] (W);
    \end{scope}

    \draw[thick, blue] (A)--(B);
    \draw[thick, blue] (C)--(D);

    \tikzstyle{v}=[circle,draw,fill=black!50,inner sep=0pt,minimum width=4pt]

    \draw[rounded corners, thick] (-2,0) -- (-1.5,0) -- (-1.5,4) --(-3,4)--(-3,0)--(-2,0);
      \draw(-2.25,4.7) node [label=below:$H$]{};
      \draw (-2.25, 3) node [v] (v) [label=left: $v$]{};
      \draw (-2.25, 1) node [v] (w) [label=left: $w$]{};
      \draw[thick, postaction={decorate},
      decoration={markings,
      mark =at position 0.25 with {\coordinate (a) at (0,0);},
      mark =at position 0.6 with {\coordinate (b) at (0,0);},
      mark =at position 0.45 with {\coordinate (c) at (0,0);},
      mark =at position 0.8 with {\coordinate (d) at (0,0);}
      }] 
      (v) to[out=-0.5, in=0.5, looseness=5] (w);

    \draw[thick] (a)--(b);
    \draw[thick] (c)--(d);

    \draw (a) node[v] [label=above:$a$] {};
    \draw (b) node[v] [label=below:$b$] {};
    \draw (c) node[v] [label=above:$c$] {};
    \draw (d) node[v] [label=below:$d$] {};
 
\end{tikzpicture}
\caption{The case when~$P$ has two chords $ab, cd$ with $c\in V(aPb)\setminus \{a,b\}$ and $d\notin V(aPb)$.}
\end{subfigure}
    \caption{A shortest~$H$-extendable path with two chords in Lemma~\ref{lem:shortestHpath-chordsofpath}. }
    \label{fig:chords2}
\end{figure}

    \subsection{Chords whose one endpoint is in an~$H$-extendable path}

    Now, we deal with chords whose one endpoint is in an~$H$-extendable path. 
    We consider several possible configurations of a pair consisting of such a chord and other types of chords.

    \begin{figure}[t]%
    \centering
    \begin{subfigure}{0.45\linewidth}
    \centering
    \begin{tikzpicture}
    \path[use as bounding box] (-3.2,-0.5) rectangle (1.5,4.8);
    \tikzstyle{R}=[circle, draw, fill=black!50, opacity=0, inner sep=0pt,minimum width=0pt]
    \draw (-2.12, 3.17) node [R] (v1) {};
      \draw (-2.12, 1.17) node [R] (w1) {};
      \draw (-2.12, 2.17) node [R] (a1) {};
      \draw[thick, red, postaction={decorate}, 
      decoration={markings,
      mark =at position 0.5 with {\coordinate (b1) at (0,0);},
      mark =at position 0.25 with {\coordinate (c1) at (0,0);},
      mark =at position 0.7 with {\coordinate (d1) at (0,0);}
      }] 
      (v1) to[out=-0.5, in=0.5, looseness=5] (w1);

    \begin{scope}
        \clip (v1) circle (1.65); 
        \clip (c1) circle (1.8);
        \draw[line width = 3pt, white] (v1) to[out=-0.5, in=0.5, looseness=5] (w1);
    \end{scope}
    \begin{scope}
        \clip (b1) circle (1.25); 
        \clip (d1) circle (1.25); 
        \draw[line width = 3pt , white] (v1) to[out=-0.5, in=0.5, looseness=5] (w1);
    \end{scope}
    
    \draw[thick, red] (a1)--(b1);
    \draw[thick, red] (c1)--(d1);

    \tikzstyle{v}=[circle,draw,fill=black!50,inner sep=0pt,minimum width=4pt]
    \draw(-2.25,4.7) node [label=below:$H$]{};
    \draw[rounded corners, thick] (-2,0) -- (-1.5,0) -- (-1.5,4) --(-3,4)--(-3,0)--(-2,0);
      \draw (-2.25, 3) node [v] [label=left:$v$]{};
      \draw (-2.25, 1) node [v] (w) [label=left:$w$]{};
      \draw (-2.25,2) node [v] (a) [label=left:$a$]{};
      \draw[thick, postaction={decorate}, 
      decoration={markings,
      mark =at position 0.5 with {\coordinate (b) at (0,0);},
      mark =at position 0.25 with {\coordinate (c) at (0,0);},
      mark =at position 0.7 with {\coordinate (d) at (0,0);}
      }] 
      (v) to[out=-0.5, in=0.5, looseness=5] (w);

    \draw[thick] (a)--(b);
    \draw[thick] (c)--(d);

    \draw (b) node[v] [label = right: $b$]{};
    \draw (c) node[v] [label = above: $c$]{};
    \draw (d) node[v] [label = below: $d$]{};

\end{tikzpicture}
\end{subfigure}
\hfill
  \begin{subfigure}{0.45\linewidth}
  \centering
    \begin{tikzpicture}
    \path[use as bounding box] (-3.2,-0.5) rectangle (1.5,4.8);
    \tikzstyle{B}=[circle,draw,fill=black!50, opacity=0, inner sep=0pt,minimum width=0pt]
    \draw (-2.12, 3.17) node [B] (V) {};
      \draw (-2.12, 1.17) node [B] (W) {};
      \draw (-2.12, 2.17) node [B] (A) {};
      \draw[thick, blue, postaction={decorate}, 
      decoration={markings,
      mark =at position 0.5 with {\coordinate (B) at (0,0);},
      mark =at position 0.25 with {\coordinate (C) at (0,0);},
      mark =at position 0.7 with {\coordinate (D) at (0,0);}
      }] 
      (V) to[out=-0.5, in=0.5, looseness=5] (W);

    \begin{scope}
        \clip (C) circle (1.55); 
        \clip (B) circle (1.54);
        \draw[line width = 3pt, white] (V) to[out=-0.5, in=0.5, looseness=5] (W);
    \end{scope}
    \begin{scope}
        \clip (W) circle (2); 
        \clip (D) circle (2); 
        \draw[line width = 3pt , white] (V) to[out=-0.5, in=0.5, looseness=5] (W);
    \end{scope}
    
    \draw[thick, blue] (A)--(B);
    \draw[thick, blue] (C)--(D);

    \tikzstyle{v}=[circle,draw,fill=black!50,inner sep=0pt,minimum width=4pt]
    \draw(-2.25,4.7) node [label=below:$H$]{};
    \draw[rounded corners, thick] (-2,0) -- (-1.5,0) -- (-1.5,4) --(-3,4)--(-3,0)--(-2,0);
      \draw (-2.25, 3) node [v] [label=left:$v$]{};
      \draw (-2.25, 1) node [v] (w) [label=left:$w$]{};
      \draw (-2.25,2) node [v] (a) [label=left:$a$]{};
      \draw[thick, postaction={decorate}, 
      decoration={markings,
      mark =at position 0.5 with {\coordinate (b) at (0,0);},
      mark =at position 0.25 with {\coordinate (c) at (0,0);},
      mark =at position 0.7 with {\coordinate (d) at (0,0);}
      }] 
      (v) to[out=-0.5, in=0.5, looseness=5] (w);

    \draw[thick] (a)--(b);
    \draw[thick] (c)--(d);
    
    \draw (b) node[v] [label = right: $b$]{};
    \draw (c) node[v] [label = above: $c$]{};
    \draw (d) node[v] [label = below: $d$]{};

\end{tikzpicture}
\end{subfigure}
    \caption{A shortest~$H$-extendable path with two chords in Lemma~\ref{lem:shortestHpath-chordandedge}.
The red and blue paths denote $P_1$ and $P_2$ in the proof. }
    \label{fig:chords3}
\end{figure}

\begin{lemma}\label{lem:shortestHpath-chordandedge}
    Let~$G$ be a graph and let~$H$ be a connected bipartite subgraph of~$G$ and $S \subseteq V(H)$.
Let~$P$ be a shortest path among~$H$-extendable paths in $G-N_{G-(V(H)\setminus S)}[S]$.
If~$P$ has length at least~$5$, then $H \cup P$ has no two chords $ab$ and $cd$ such that
    \begin{itemize}
        \item $a\in V(H)\setminus V(P)$ and $b, c, d \in V(P)\setminus V(H)$, and 
        \item~$b$ is an internal vertex of the subpath $cPd$.
    \end{itemize}
\end{lemma}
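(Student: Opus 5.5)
The plan is to argue by contradiction with a shortness argument: from the two chords $ab$ and $cd$ we build two $H$-extendable paths in $G-N_{G-(V(H)\setminus S)}[S]$ whose total length is only slightly more than $\abs{E(P)}$. Then one of them is shorter than $P$ once $\abs{E(P)}\ge 5$.

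\emph{Setup.} Let $v,w$ be the endpoints of $P$, oriented so that $v,c,b,d,w$ appear in this order along $P$. Since $H\cup P$ is bipartite and connected, fix a bipartition indicator $\beta$ of $H\cup P$. By Lemma~\ref{lem:indicatornot}, an $H$-path with endpoints $x,y$ is $H$-extendable if and only if its length is congruent to $\beta(x)+\beta(y)$ modulo $2$. We also check that all new paths lie in the allowed graph $G'=G-N_{G-(V(H)\setminus S)}[S]$. Since $b\in V(P)\setminus V(H)$ lies in $G'$ and is adjacent to $a$, we get $a\notin S$. As $a\in V(H)$, it is not removed, so $a\in V(G')$. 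Moreover $a\ne v,w$ because $a\notin V(P)$. Every path we build uses only vertices of $P$ together with $a$, and it is not an edge of $H$ because it passes through $b\notin V(H)$.

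\emph{Case 1: $ab$ is parity-preserving for $H\cup P$.} Here $\beta(a)\ne\beta(b)$. The paths $vPb\cup ba$ and $ab\cup bPw$ are $H$-paths whose lengths have the parity prescribed by $\beta$, so both are $H$-extendable. Their lengths sum to $\abs{E(P)}+2$. If both had length at least $\abs{E(P)}$, then $\abs{E(P)}\le 2$, a contradiction. This case does not use $cd$.

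\emph{Case 2: $ab$ is parity-breaking.} By Lemma~\ref{lem:onechordforpath}, $\abs{E(cPd)}$ is even, so $\abs{E(cPb)}\equiv\abs{E(bPd)}\pmod 2$.
\begin{itemize}
\item Let $P_1=vPc\cup cd\cup dPb\cup ba$. Replacing $cPb$ by $cd\cup dPb$ flips the parity, since $1+\abs{E(bPd)}\not\equiv\abs{E(cPb)}$. The edge $ba$ also deviates from $\beta$. The two deviations cancel, so $P_1$ is an $H$-extendable $(v,a)$-path.
\item Symmetrically, $P_2=ab\cup bPc\cup cd\cup dPw$ is an $H$-extendable $(a,w)$-path.
\end{itemize}
Both are genuine paths, since their vertex sets are disjoint subpaths of $P$ plus the vertex $a$. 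Their total length is
\[
\abs{E(vPc)}+\abs{E(dPw)}+\abs{E(cPb)}+\abs{E(bPd)}+4=\abs{E(P)}+4 .
\]
If both had length at least $\abs{E(P)}$, then $\abs{E(P)}\le 4$, contradicting $\abs{E(P)}\ge 5$. So one of them is a shorter $H$-extendable path in $G'$, contradicting the choice of $P$.

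The step needing the most care is the parity bookkeeping in Case 2, namely that the flip caused by rerouting through $cd$ exactly cancels the parity-breaking chord $ab$. The vertex set check for $G'$ is routine once we note that $a\notin S$.
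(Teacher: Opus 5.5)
Your proof is correct and takes essentially the paper's route: the same two rerouted paths $vPc\cup cd\cup dPb\cup ba$ and $ab\cup bPc\cup cd\cup dPw$, made $H$-extendable by the evenness of $cPd$ from Lemma~\ref{lem:onechordforpath}, with total length $\abs{E(P)}+4$. The differences are minor. The paper skips your Case~1 by noting that $vPb\cup ba$ and $ab\cup bPw$ are each strictly shorter than $P$, which forces $ab$ to be parity-breaking. Your explicit check that $a$ survives in $G-N_{G-(V(H)\setminus S)}[S]$ fills in a detail the paper leaves implicit.
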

    
    \begin{proof}
        Let~$v$ and~$w$ be the endpoints of~$P$ such that $v,c,b,d,w$ appear in this order when traversing~$P$ from~$v$ to~$w$.
See Figure~\ref{fig:chords3} for an illustration. 
        
        Let $\beta$ be a bipartition indicator of~$H$. 
        Since~$P$ is a shortest~$H$-extendable path and $vPb\cup ba$ is strictly shorter than~$P$, 
        $H\cup (vPb\cup ab)$ is not bipartite.
Then, we have that $\abs{E(vPb)} + 1 \equiv \beta(v) + \beta(a) +1 \pmod{2}$.
        Similarly, we have that 
        $\abs{E(bPw)} + 1 \equiv \beta(a) + \beta(w) + 1 \pmod{2}$.

        By Lemma~\ref{lem:onechordforpath}, $\abs{E(cPd)} \equiv 0 \pmod{2}$.
Then, we obtain 
        \begin{align*}
            |E(vPc)| + |E(bPd)| &\equiv |E(vPb)| + |E(cPd)|
                                \equiv \beta(v) + \beta(a) \pmod{2} \\
            |E(cPb)| + |E(dPw)| &\equiv |E(bPw)| + |E(cPd)| 
                                \equiv \beta(a) + \beta(w) \pmod{2}.
        \end{align*}

        Let $P_1 = ab \cup bPc \cup cd \cup dPw$ and $P_2 = vPc \cup cd \cup dPb \cup ba$.
        The above congruences imply that both $P_1$ and $P_2$ are~$H$-extendable paths. 
        
        As~$P$ has length at least~$5$, one of $vPc$, $cPb$, $bPd$, and $dPw$ has length greater than 1.
So, there exists $i \in \{1,2\}$ such that $\abs{E(P_i)} < |E(P)|$, a contradiction.
        \end{proof}

  \begin{lemma}\label{lem:chordandedge}
    Let~$G$ be a graph and let~$H$ be a connected bipartite subgraph of~$G$ and $S \subseteq V(H)$.
Let~$P$ be a shortest path among~$H$-extendable paths in $G-N_{G-(V(H)\setminus S)}[S]$.
Then $H\cup P$ has no two chords $ab$ and $cd$ such that $a\in V(H)\setminus V(P)$, $b\in V(P)\setminus V(H)$, $c, d\in V(P)$, and~$b$ is not an internal vertex of $cPd$.
    \end{lemma}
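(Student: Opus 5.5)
The plan is to turn the two chords into a strictly shorter $H$-extendable path, which contradicts the choice of~$P$. The chord $ab$ alone gives two shorter $H$-paths, which minimality forces to be parity-breaking. Combining that parity information with the evenness of $cPd$ from Lemma~\ref{lem:onechordforpath} then shows that a suitable rerouting through both chords is parity-preserving.

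\emph{Setup.} Let $v,w$ be the endpoints of~$P$ and let $\beta$ be a bipartition indicator of~$H$. Since $b\notin V(H)$, $b$ is an internal vertex of~$P$. Since $b$ is not an internal vertex of $cPd$, by symmetry (swapping the roles of $v$ and $w$) we may assume the order along~$P$ is $v,\dots,b,\dots,c,\dots,d,\dots,w$, where possibly $b=c$. We first check that the auxiliary paths below lie in $G':=G-N_{G-(V(H)\setminus S)}[S]$.
\begin{itemize}
\item Suppose $a\in S$. Then $b$ is a neighbour of~$a$ in $G-(V(H)\setminus S)$, because $b\notin V(H)$. Hence $b\notin V(G')$, contradicting $b\in V(P)$. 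So $a\notin S$.
\item Every vertex of $N_{G-(V(H)\setminus S)}[S]$ lies in $S$ or outside $V(H)$. Since $a\in V(H)\setminus S$, this gives $a\in V(G')$.
\end{itemize}
Any path built from $a$, the edge $ab$, subpaths of~$P$ and the chord $cd$ therefore lies in~$G'$.

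\emph{Step 1: parity from $ab$.} Consider $Q:=ab\cup bPw$. Its endpoints $a,w$ lie in $V(H)$, and its internal vertices are internal vertices of~$P$, hence not in~$V(H)$. It has length at least~$2$, so it is not an edge of~$H$, and thus $Q$ is an $H$-path in~$G'$. Because $|E(vPb)|\ge 1$, we have $|E(Q)|=|E(bPw)|+1\le |E(P)|$. If this is an equality, then $vPb$ is the single edge $vb$.

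Suppose first that $|E(Q)|<|E(P)|$. By minimality $Q$ is parity-breaking, so Lemma~\ref{lem:indicatornot} gives $|E(bPw)|+1\not\equiv\beta(a)+\beta(w)$, i.e.\ $|E(bPw)|\equiv \beta(a)+\beta(w)\pmod 2$. In the equality case this congruence is not needed: the path built in Step~2 has length at most $|E(bPw)|<|E(P)|$, and I would redo the parity count with $vPb=vb$ directly. A cleaner route for that case is to use the symmetric path $vPb\cup ba$ in the same way.

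\emph{Step 2: rerouting.} By Lemma~\ref{lem:onechordforpath}, $|E(cPd)|$ is even, and since $G$ has no multiple edges, $|E(cPd)|\ge 2$. Define
\[P':=ab\cup bPc\cup cd\cup dPw,\]
which becomes $ab\cup cd\cup dPw$ when $b=c$. The vertices of $P'$ are distinct, because $a\notin V(P)$. Its internal vertices are internal vertices of~$P$, and its endpoints are $a,w\in V(H)$, so $P'$ is an $H$-path in~$G'$. Its length satisfies
\[|E(P')|=|E(bPw)|-|E(cPd)|+2\equiv |E(bPw)|\equiv\beta(a)+\beta(w)\pmod 2.\]
By Lemma~\ref{lem:indicatornot}, $P'$ is $H$-extendable. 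Moreover $|E(P')|\le |E(bPw)|<|E(P)|$, which contradicts the choice of~$P$.

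\emph{Main obstacle.} The delicate points are bookkeeping rather than ideas. The first is confirming that the auxiliary paths stay inside~$G'$, which needs the argument above that $a\notin S$. The second is handling the strict-inequality issue in Step~1 when $vPb$ is a single edge. I expect that case to be dispatched either by the symmetric path $vPb\cup ba$ or by noting that the final path $P'$ is already strictly shorter, so only the parity congruence needs separate justification there.
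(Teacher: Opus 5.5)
Your argument is correct when $|E(vPb)|\ge 2$, and there it takes a slightly different route from the paper. You compare $Q=ab\cup bPw$ with $P'$ directly: they share the endpoints $a,w$ and differ by exchanging the even path $cPd$ for the edge $cd$, so their lengths have opposite parity. The paper never uses $Q$. Your check that $a\notin S$, so that every auxiliary path lies in $G-N_{G-(V(H)\setminus S)}[S]$, is more careful than the paper's text, which leaves this implicit.

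\textbf{The gap.} The boundary case $vPb=vb$ is not proved. There $|E(Q)|=|E(P)|$, so minimality says nothing about the parity of $Q$. Your remark that ``this congruence is not needed'' is wrong. Knowing that $P'$ is strictly shorter than $P$ yields a contradiction only if $P'$ is also $H$-extendable, and that is exactly the congruence $|E(bPw)|\equiv\beta(a)+\beta(w)\pmod 2$ you have lost. Your alternative hint, to use $vPb\cup ba$, points the right way but omits the ingredient that makes it work: the parity of $P$ itself.

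\textbf{The fix.} The path $vPb\cup ba$ is always an $H$-path in $G-N_{G-(V(H)\setminus S)}[S]$ that is strictly shorter than $P$, because $|E(bPw)|\ge |E(cPd)|\ge 2$. So it is parity-breaking, and Lemma~\ref{lem:indicatornot} gives $|E(vPb)|\equiv \beta(v)+\beta(a)\pmod 2$. Since $P$ is $H$-extendable, $|E(P)|\equiv\beta(v)+\beta(w)\pmod 2$. Subtracting gives $|E(bPw)|\equiv \beta(a)+\beta(w)\pmod 2$, with no case distinction, and your Step~2 then finishes the proof. This is exactly the paper's argument. It makes both $Q$ and the case split unnecessary: the path on the near side of $b$ is always strictly shorter than $P$, whereas your path on the far side need not be.
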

    \begin{proof}
       Assume, for the sake of contradiction, that $H\cup P$ has two such chords~$ab$ and~$cd$.
       Let~$v$ and~$w$ be the endpoints of~$P$, and without loss of generality, we may assume that $v,b,c,d,w$ appear in this order when we traverse from~$v$ to~$w$ in~$P$, possibly $b=c$.
       
       Let $\beta$ be a bipartition indicator of~$H$.
By Lemma~\ref{lem:indicatornot}, $\abs{E(P)} \equiv \beta(v) + \beta(w) \pmod{2}$.
We know that both $vPb \cup ba$ and $ab \cup bPc \cup cd \cup dPw$ are~$H$-paths shorter than~$P$.
As~$P$ is a shortest path among~$H$-extendable paths, they are not~$H$-extendable.
       So, by Lemma~\ref{lem:indicatornot}, we have
       \begin{align*}
           &1+|E(vPb)| \equiv \beta(v) + \beta(a) + 1 \pmod{2} \\
           &2+|E(bPc)| + |E(dPw)| \equiv \beta(a) + \beta(w) + 1\pmod{2}.
       \end{align*}
        By adding these two congruences, we get 
        $\abs{E(vPb)}+|E(bPc)| + |E(dPw)| \equiv \beta(v) + \beta(w) + 1 \pmod{2}$.
By Lemma~\ref{lem:onechordforpath}, $\abs{E(cPd)} \equiv 0 \pmod 2$.
Thus, \[|E(P)| \equiv |E(vPb)|+|E(bPc)| + |E(cPd)| + |E(dPw)| \equiv  \beta(v) + \beta(w) + 1\pmod{2},\] which is a contradiction.
    \end{proof}

    \begin{lemma}\label{lem:shorthestHpath-edges}
    Let~$G$ be a graph and let~$H$ be a connected bipartite subgraph of~$G$ and $S \subseteq V(H)$.
Let $P=p_1p_2 \cdots p_m$ be a shortest path among~$H$-extendable paths in $G-N_{G-(V(H)\setminus S)}[S]$. 

    If $H\cup P$ has two chords $a_1b_1$ and $a_2b_2$ such that $a_i\in V(H)\setminus V(P)$ and $b_i\in V(P)\setminus V(H)$ for each $i\in\{1,2\}$, where $a_1\neq a_2$ and $b_1\neq b_2$, then $\{b_1,b_2\}=\{p_2,p_{m-1}\}$.
    \end{lemma}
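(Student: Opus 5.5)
We use shortcut paths, as in Lemma~\ref{lem:shortestHpath-chordandedge} and Lemma~\ref{lem:chordandedge}. Let $\beta$ be a bipartition indicator of~$H$ and write $v=p_1$, $w=p_m$. By Lemma~\ref{lem:indicatornot}, $\abs{E(P)}\equiv\beta(v)+\beta(w)\pmod 2$. Order $b_1,b_2$ so that $b_1$ precedes $b_2$ on~$P$, say $b_1=p_i$ and $b_2=p_j$ with $1<i<j<m$; these are internal vertices of~$P$ because $b_1,b_2\notin V(H)$. It suffices to show $i=2$ and $j=m-1$.

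\textbf{Parities of the shortcuts.} For each $t\in\{1,2\}$, the paths $vPb_t\cup b_ta_t$ and $a_tb_t\cup b_tPw$ are $H$-paths. Their internal vertices lie in $\internal(P)$, and they are not edges of~$H$ since $b_t\notin V(H)$. Both lie in $G-N_{G-(V(H)\setminus S)}[S]$, since all their vertices except $a_t$ lie on~$P$, and $a_t\in V(H)$. We claim each is strictly shorter than~$P$. The first has length $i$ or $j$, which is less than $m-1=\abs{E(P)}$; the second has length $m-i+1$ or $m-j+1$, which is less than $m-1$ since $i,j\geq 2$. So all four paths are parity-breaking, and Lemma~\ref{lem:indicatornot} gives
\[
\abs{E(vPb_t)}\equiv\beta(v)+\beta(a_t)\quad\text{and}\quad \abs{E(b_tPw)}\equiv\beta(a_t)+\beta(w)\pmod 2 .
\]

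\textbf{The mixed paths.} Next consider two further $H$-paths:
\[
Q_1=vPb_1\cup b_1a_1 \quad\text{and}\quad Q_2=a_2b_2\cup b_2Pw ,
\]
together with the path $R=a_1b_1\cup b_1Pb_2\cup b_2a_2$. The path $R$ is an $H$-path from $a_1$ to $a_2$ of length $j-i+2$. It is not an edge of $H$ since it has length at least~$3$, its interior lies in $\internal(P)$, and $a_1\neq a_2$ ensures it is a genuine path. From the congruences above, $\abs{E(b_1Pb_2)}=\abs{E(vPb_2)}-\abs{E(vPb_1)}\equiv\beta(a_1)+\beta(a_2)$. Hence $\abs{E(R)}\equiv\beta(a_1)+\beta(a_2)\pmod 2$, so $R$ is $H$-extendable by Lemma~\ref{lem:indicatornot}. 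Minimality of~$P$ therefore forces $j-i+2\ge m-1$, that is, $(i-2)+(m-1-j)\le 0$. Together with $i\ge2$ and $j\le m-1$, this gives $i=2$ and $j=m-1$, i.e.\ $\{b_1,b_2\}=\{p_2,p_{m-1}\}$.

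\textbf{Main obstacle.} The delicate step is verifying that $R$ is a legitimate competitor: it must be an $H$-path lying in $G-N_{G-(V(H)\setminus S)}[S]$. Its endpoints $a_1,a_2$ lie in $V(H)$, which is permitted, and the interior vertices of $P$ already avoid the forbidden neighbourhood. If $b_1$ and $b_2$ were equal, $R$ would not be a path, which is why $b_1\neq b_2$ is assumed. Similarly, $a_1\neq a_2$ prevents $R$ from degenerating into a cycle.
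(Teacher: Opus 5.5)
There is a genuine error in the step where you claim all four shortcut paths are strictly shorter than $P$. The path $vPb_2\cup b_2a_2$ has length $j$, which equals $|E(P)|=m-1$ when $j=m-1$. Likewise, $a_1b_1\cup b_1Pw$ has length $m-i+1$, which equals $m-1$ when $i=2$; your justification ``since $i,j\ge 2$'' only gives $\le$, not $<$. These are exactly the boundary configurations the conclusion allows, so minimality of $P$ says nothing about the parity of these two paths in general. Your derivation of $|E(b_1Pb_2)|\equiv\beta(a_1)+\beta(a_2)$ uses the congruence for $vPb_2\cup b_2a_2$, so it is valid only when $j\le m-2$. As written, the argument therefore only shows that $j\le m-2$ is impossible (then $R$ would be an extendable path of length at most $m-2$). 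The case $j=m-1$, $i\ge 3$ is left open.

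The repair uses the two paths you defined but never used. $Q_1=vPb_1\cup b_1a_1$ has length $i\le m-2$, and $Q_2=a_2b_2\cup b_2Pw$ has length $m-j+1\le m-2$, both because $i<j$. So both are genuinely shorter than $P$ and hence parity-breaking. Combining their congruences with $|E(P)|\equiv\beta(v)+\beta(w)$ gives $|E(b_1Pb_2)|\equiv\beta(a_1)+\beta(a_2)$. Then $R$ is $H$-extendable, and your final inequality $j-i+2\ge m-1$ finishes the proof. This is essentially the paper's argument, phrased contrapositively: if $\{b_1,b_2\}\neq\{p_2,p_{m-1}\}$, then $R$ is also strictly shorter, and summing the three parity-breaking congruences for $Q_1,Q_2,R$ contradicts the parity of $P$.

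A minor point: $a_t\in V(H)$ does not by itself put $a_t$ in $G-N_{G-(V(H)\setminus S)}[S]$, since vertices of $S$ are deleted. It holds because $a_t\in S$ would put its neighbour $b_t\notin V(H)$ into $N_{G-(V(H)\setminus S)}[S]$, contradicting $b_t\in V(P)$.
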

    \begin{proof}
    Assume that $H\cup P$ has two such chords $a_1b_1$ and $a_2b_2$ with $\{b_1,b_2\} \neq \{p_2,p_{m-1}\}$.
Let~$v$ and~$w$ be the endpoints of~$P$ so that $v,b_1,b_2,w$ appear in this order when we traverse from~$v$ to~$w$ in~$P$.
Let $P_1 =vPb_1 \cup a_1b_1$, $P_2 =a_2b_2 \cup b_2Pw$, and $P_3 =a_1b_1 \cup b_1Pb_2 \cup a_2b_2$.
Let $\beta$ be a bipartition indicator of~$H$. 
    
    We observe that $P_1$ and $P_2$ are shorter than~$P$.
Furthermore, because of the condition $\{b_1, b_2\}\neq \{p_2, p_{m-1}\}$, $P_3$ is shorter than~$P$.
Since~$P$ is a shortest~$H$-extendable path, $H \cup P_i$ is not bipartite for all $i \in \{1,2,3\}$.
        By Lemma~\ref{lem:indicatornot}, it follows that
        \begin{align*}
            &|E(vPb_1)| + 1 \equiv \beta(v) + \beta(a_1) + 1 \pmod{2}, \\
            &|E(b_2Pw)| + 1 \equiv \beta(w) + \beta(a_2) + 1 \pmod{2}, \\
            &|E(b_1Pb_2)| + 2 \equiv \beta(a_1) + \beta(a_2) + 1 \pmod{2}.
        \end{align*}
         By adding these congruences, we obtain
         \begin{align*}
             |E(P)| &\equiv (\beta(v) + \beta(a_1) + 1) + (\beta(w) + \beta(a_2) + 1) + (\beta(a_1) + \beta(a_2) + 1) \\
             &\equiv \beta(v) + \beta(w) + 1 \pmod{2}.
         \end{align*}
         However, we know that $\abs{E(P)} \equiv \beta(v) + \beta(w) \pmod{2}$ as $H\cup P$ is bipartite.
This is a contradiction.
     \end{proof}

    \begin{lemma}\label{lem:shorthestHpath-edges-2}
    Let~$G$ be a graph and let~$H$ be a connected bipartite subgraph of~$G$ and $S \subseteq V(H)$.
Let~$P$ be a shortest path among~$H$-extendable paths in $G-N_{G-(V(H)\setminus S)}[S]$, and suppose that~$P$ has length at least~$3$.
    Then, $H \cup P$ has no two chords $a_1b$ and $a_2b$ such that $a_1, a_2\in V(H)\setminus V(P)$ with $a_1\neq a_2$ and $b\in V(P)\setminus V(H)$.
    \end{lemma}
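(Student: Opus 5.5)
The plan is to combine the parity bookkeeping of Lemma~\ref{lem:indicatornot} with the minimality of~$P$. Write $P=p_1p_2\cdots p_m$ with $m\ge 4$, since $P$ has length at least~$3$. Let $v=p_1$ and $w=p_m$, and let $\beta$ be a bipartition indicator of~$H$. Suppose for contradiction that $a_1b$ and $a_2b$ are chords with $a_1\neq a_2$ in $V(H)\setminus V(P)$ and $b\in V(P)\setminus V(H)$. Then $b$ is an internal vertex of~$P$.

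First, I would check that every path built from $b$, the $a_i$ and subpaths of~$P$ lies in $G-N_{G-(V(H)\setminus S)}[S]$. The vertices of~$P$ lie there by assumption. Suppose some $a_i\in S$. Since $b\notin V(H)$, the vertex $b$ belongs to $G-(V(H)\setminus S)$ and is adjacent to $a_i$, so $b\in N_{G-(V(H)\setminus S)}[S]$. This contradicts $b\in V(P)$. Hence $a_i\in V(H)\setminus S$, which is disjoint from $N_{G-(V(H)\setminus S)}[S]$ because that set lies inside $V(G)\setminus(V(H)\setminus S)$. So every path below is a candidate in the minimality of~$P$.

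Step 1 shows that $\beta(a_1)\neq\beta(a_2)$. The path $a_1ba_2$ is an $H$-path of length~$2$: its internal vertex $b$ is not in~$H$, and its ends are distinct vertices of~$H$. Its length is less than $3\le|E(P)|$, so it is not $H$-extendable. By Lemma~\ref{lem:indicatornot}, $2\not\equiv\beta(a_1)+\beta(a_2)\pmod 2$, which gives $\beta(a_1)\neq\beta(a_2)$.

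Step 2 chooses the shorter side of~$P$ at~$b$. Write $b=p_j$ with $2\le j\le m-1$. The paths $vPb\cup ba_i$ have length~$j$, and the paths $a_ib\cup bPw$ have length $m-j+1$. Both cannot have length $m-1=|E(P)|$, since that would force $j=m-1$ and $j=2$, hence $m=3$. So one side, say without loss of generality the $v$-side, gives paths $Q_i=vPb\cup ba_i$ strictly shorter than~$P$. Each $Q_i$ is an $H$-path: its internal vertices lie in $V(P)\setminus V(H)$, and its ends $v\neq a_i$ are distinct because $a_i\notin V(P)$. The path $Q_i$ is $H$-extendable exactly when $j\equiv\beta(v)+\beta(a_i)\pmod 2$. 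By Step~1, exactly one of $i=1,2$ satisfies this. That $Q_i$ is a shorter $H$-extendable path in the allowed subgraph, which contradicts the choice of~$P$.

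The main care point is the strict-length bookkeeping in Step~2, since the $v$-side path has the same length as~$P$ when $b=p_{m-1}$. This is exactly where the hypothesis $|E(P)|\ge 3$ enters. The containment check for the $a_i$ in the first paragraph is the other subtle point.
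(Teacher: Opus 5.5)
Your proposal is correct and follows essentially the paper's argument. You first get $\beta(a_1)\neq\beta(a_2)$ from the short path $a_1ba_2$, then work on a side of $P$ at $b$ where the paths $v'Pb\cup ba_i$ are strictly shorter than $P$, and note that exactly one of them is $H$-extendable. Your explicit check that $a_i\notin S$, so that these paths lie in $G-N_{G-(V(H)\setminus S)}[S]$, supplies a detail the paper's proof leaves implicit.
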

\begin{proof}
    Suppose that $a_1b\in E(G)$ and $a_2b\in E(G)$.
Let $\beta$ be a bipartition indicator of~$H$.
Let~$v$ and~$w$ be the endpoints of~$P$. 
    
    Since~$P$ has length at least~$3$, $(a_1ba_2) \cup H$ is not bipartite.
This implies that $a_1$ and $a_2$ lie in different parts of the bipartition.
Thus, $\beta(a_1)+\beta(a_2) \equiv 1 \pmod{2}$. 
    
    We claim that both $vPb$ and $bPw$ have length at least~$2$.
If at least one of $vPb$ and $bPw$ has length~$1$, adding either $a_1b$ or $a_2b$ yields a shorter~$H$-extendable path than~$P$.
This yields a contradiction.
Therefore, for $v' \in \{v, w\}$ and $i \in \{1, 2\}$, $v'Pb \cup ba_i$ forms an~$H$-path shorter than~$P$, which is not~$H$-extendable.
    By Lemma~\ref{lem:indicatornot}, we attain
    \begin{align*}
        &\beta(v) + \beta(a_1) \equiv 1 + (|E(vPb)| + |a_1b|) \pmod{2} \\
        &\beta(v) + \beta(a_2) \equiv 1 + (|E(vPb)| + |a_2b|) \pmod{2}.
    \end{align*}
    Adding the above congruences, we have $\beta(a_1) + \beta(a_2) \equiv  0 \pmod{2}$, which is a contradiction.
\end{proof}

 \begin{lemma}\label{lem:shorthestHpath-edges-3}
    Let~$G$ be a graph and let~$H$ be a connected bipartite subgraph of~$G$ and $S \subseteq V(H)$.
Let~$P$ be a shortest path among~$H$-extendable paths in $G-N_{G-(V(H)\setminus S)}[S]$.
    
    Let $a\in V(H)\setminus V(P)$ and $b_1, b_2\in V(P)\setminus V(H)$ with $b_1\neq b_2$ such that $ab_1, ab_2\in E(G)$.
Then there is no edge $a'b'\in E(G)$ with $a'\in V(H)\setminus (V(P)\cup \{a\})$ and $b'\in V(P)\setminus V(H)$.
\end{lemma}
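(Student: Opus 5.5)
The plan is to reduce everything to the two preceding lemmas, Lemma~\ref{lem:shorthestHpath-edges} and Lemma~\ref{lem:shorthestHpath-edges-2}. Write $P=p_1p_2\cdots p_m$. Suppose for contradiction that there is an edge $a'b'$ with $a'\in V(H)\setminus(V(P)\cup\{a\})$ and $b'\in V(P)\setminus V(H)$. Since $b_1\neq b_2$, the vertex $b'$ differs from at least one of $b_1,b_2$. I will split into two cases according to whether $b'\in\{b_1,b_2\}$.

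\emph{Case 1: $b'\notin\{b_1,b_2\}$.} Apply Lemma~\ref{lem:shorthestHpath-edges} to the pair of chords $ab_1$, $a'b'$; this is allowed because $a\neq a'$ and $b_1\neq b'$. It gives $\{b_1,b'\}=\{p_2,p_{m-1}\}$. Applying the same lemma to the pair $ab_2$, $a'b'$ gives $\{b_2,b'\}=\{p_2,p_{m-1}\}$. If $p_2\neq p_{m-1}$, then both $b_1$ and $b_2$ are the unique element of $\{p_2,p_{m-1}\}\setminus\{b'\}$, so $b_1=b_2$, a contradiction. If $p_2=p_{m-1}$, then the set has a single element, which forces $b_1=b'$, again a contradiction.

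\emph{Case 2: $b'\in\{b_1,b_2\}$.} By symmetry, say $b'=b_1$, so $b_1$ is adjacent to the two distinct vertices $a,a'\in V(H)\setminus V(P)$. First apply Lemma~\ref{lem:shorthestHpath-edges} to the pair $ab_2$, $a'b_1$, which is valid since $a\neq a'$ and $b_2\neq b_1$. This yields $\{b_1,b_2\}=\{p_2,p_{m-1}\}$. Because $b_1\neq b_2$, we get $p_2\neq p_{m-1}$, hence $m\ge 4$ and $P$ has length at least~$3$. Now Lemma~\ref{lem:shorthestHpath-edges-2}, applied to the two chords $ab_1$ and $a'b_1$ sharing the endpoint $b_1\in V(P)\setminus V(H)$, gives a contradiction.

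The only delicate point is Case~2. Lemma~\ref{lem:shorthestHpath-edges-2} needs $P$ to have length at least~$3$, and this is not assumed in the statement. The step to get right is therefore extracting this length bound from Lemma~\ref{lem:shorthestHpath-edges}, via $b_1\neq b_2$ forcing $p_2\neq p_{m-1}$. Apart from that, the argument is just bookkeeping with the hypotheses $a\neq a'$ and $b_1\neq b_2$ needed to invoke both lemmas.
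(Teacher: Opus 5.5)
Your proof is correct and matches the paper's argument: the case $b'\in\{b_1,b_2\}$ is handled by Lemma~\ref{lem:shorthestHpath-edges-2}, and the case $b'\notin\{b_1,b_2\}$ by Lemma~\ref{lem:shorthestHpath-edges}. Your explicit check of the length-at-least-$3$ hypothesis, which the paper leaves implicit, is valid but can be done more directly: $b_1\neq b_2$ are two distinct internal vertices of $P$, so $P$ has at least four vertices.
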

    
\begin{proof}
    Suppose that there is an edge $a'b'\in E(G)$ with $a'\in V(H)\setminus (V(P)\cup \{a\})$ and $b'\in V(P)\setminus V(H)$.
If there exists $i \in \{1, 2\}$ such that $b'=b_i$, then this contradicts Lemma~\ref{lem:shorthestHpath-edges-2}.
Otherwise, $b'\notin \{b_1, b_2\}$.
Since $b_1\neq b_2$, there exists $i \in \{1,2\}$ such that $\{b', b_i\} \neq \{p_2, p_{m-1}\}$.
Then this contradicts  Lemma~\ref{lem:shorthestHpath-edges}.
\end{proof}

\begin{lemma}\label{lem:nestednew}
    Let~$G$ be a $C_4$-subgraph-free graph, and let~$C$ be a shortest even cycle in~$G$ such that $G-N_G[C]$ has no even cycle.
Let~$H$ be a connected bipartite subgraph of~$G$ containing~$C$, and let $S \subseteq V(H)$.
Let~$P$ be a shortest~$H$-extendable path in $G-N_{G-(V(H)\setminus S)}[S]$.
    Then~$P$ has no three pairwise vertex-disjoint chords whose endpoints are all internal vertices of~$P$.
\end{lemma}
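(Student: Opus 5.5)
The plan is to show that three such chords would produce an even cycle in $G-N_G[C]$, which contradicts the hypothesis. Suppose, for a contradiction, that $a_1b_1,a_2b_2,a_3b_3$ are pairwise vertex-disjoint chords of $H\cup P$ with all six endpoints internal vertices of~$P$. Let $v,w$ be the endpoints of~$P$.

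\textbf{Step 1: the chords are nested.} By Lemma~\ref{lem:shortestHpath-chordsofpath}(1), no two of the subpaths $a_iPb_i$ are internally disjoint. By Lemma~\ref{lem:shortestHpath-chordsofpath}(2), no two chords cross. Since the chords are vertex-disjoint, they must be strictly nested. After relabelling, the vertices appear along~$P$ as $v,a_1,a_2,a_3,b_3,b_2,b_1,w$. By Lemma~\ref{lem:onechordforpath}, each $a_iPb_i$ has even length. Hence $P$ has length at least~$7$, and in particular at least~$5$.

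\textbf{Step 2: an even cycle.} Consider $D=a_2Pa_3\cup a_3b_3\cup b_3Pb_2\cup b_2a_2$. Its length is $\abs{E(a_2Pb_2)}-\abs{E(a_3Pb_3)}+2$, which is even. Thus $D$ is an even cycle, and all of its vertices lie in $V(a_2Pb_2)$, strictly between $a_1$ and $b_1$ on~$P$. The chord $a_1b_1$ is used only to confine~$D$ away from the endpoints of~$P$ in Step~4.

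\textbf{Step 3: no vertex of $D$ is adjacent to $V(H)\setminus V(P)$.} The vertices of $D$ are internal vertices of~$P$, so they are not in $V(H)\supseteq V(C)$. Let $b\in V(D)$, and suppose $ab\in E(G)$ with $a\in V(H)\setminus V(P)$. Lemma~\ref{lem:chordandedge}, applied with the chord $cd=a_1b_1$, forces $b$ to be an internal vertex of $a_1Pb_1$. Lemma~\ref{lem:shortestHpath-chordandedge} applies because $P$ has length at least~$5$ and $a_1,b_1,b\notin V(H)$. It forbids $b$ from being an internal vertex of $a_1Pb_1$. This is a contradiction, so no such $a$ exists.

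\textbf{Step 4: no vertex of $D$ is adjacent to $v$ or $w$.} Suppose $vx\in E(G)$ with $x\in V(D)$. This edge is a chord of $H\cup P$ with both ends on~$P$. The vertex $a_1$ is internal to $vPx$, while $b_1$ is not in $vPx$. This contradicts Lemma~\ref{lem:shortestHpath-chordsofpath}(2). The case of~$w$ is symmetric, using $b_1$ internal and $a_1$ outside.

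\textbf{Conclusion.} By Steps~3 and~4, no vertex of $D$ lies in or is adjacent to $V(C)\subseteq V(H)$. So $D$ is an even cycle in $G-N_G[C]$, contradicting the hypothesis.

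The main obstacle is Step~3, namely excluding every adjacency between $D$ and the rest of~$H$. This is exactly where the outer chord $a_1b_1$ is needed: the two chord-and-edge lemmas~\ref{lem:chordandedge} and~\ref{lem:shortestHpath-chordandedge} only interact with a fixed $P$-chord, and $a_1b_1$ serves as that chord. It also explains why three chords are required rather than two.
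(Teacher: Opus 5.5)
Your proof is correct and follows the paper's argument essentially step for step. It uses the same ingredients: nesting via Lemma~\ref{lem:shortestHpath-chordsofpath}, the even cycle $D$ formed by the two inner chords, Lemmas~\ref{lem:shortestHpath-chordandedge} and~\ref{lem:chordandedge} to exclude edges from $D$ to $V(H)\setminus V(P)$, and part~(2) of Lemma~\ref{lem:shortestHpath-chordsofpath} with the outer chord $a_1b_1$ to exclude edges from $D$ to the endpoints of $P$. The paper only phrases this differently, by picking an edge $xy$ with $x\in V(C)$, $y\in V(D)$ and splitting on whether $x\in V(P)$.

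One small quibble is with your closing remark. In Step~3 the chord $a_2b_2$ alone would already suffice. So, as you correctly said in Step~2, the outer chord is genuinely needed only in Step~4.
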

\begin{proof}
    Let $P=p_1p_2\cdots p_m$.
    By Lemma~\ref{lem:shortestHpath-chordsofpath}, for two chords~$ab$ and~$cd$ of~$P$ where $\{a,b\}\cap \{c,d\}=\emptyset$, either $V(aPb)\subseteq V(cPd)$ or $V(cPd)\subseteq V(aPb)$.

    Suppose that there are three chords $a_1b_1, a_2b_2, a_3b_3$ such that $V(a_3Pb_3)\subseteq V(a_2Pb_2)\subseteq V(a_1Pb_1)$, and assume that $a_1, a_2, a_3, b_3, b_2, b_1$ appear in this order in~$P$.

    Let $D=a_2Pa_3\cup a_3b_3\cup b_3Pb_2\cup b_2a_2$.
By Lemma~\ref{lem:onechordforpath}, 
    $\abs{E(a_2Pb_2)}\equiv |E(a_3Pb_3)|\equiv0\pmod2$,
    and thus, $D$ is an even cycle.

    Because~$D$ consists of internal vertices of~$P$ and~$C$ is a subgraph of~$H$, $C$ and~$D$ are vertex-disjoint. 
    As $G-N_G[C]$ has no even cycle, there is an edge $xy$ with $x\in V(C)$ and $y\in V(D)$.

    First assume that $x\notin V(P)$.
By the choice of chords, $y\notin V(H)$.
As~$P$ contains three vertex-disjoint chords, $P$ has length at least~$5$.
So, by Lemmas~\ref{lem:shortestHpath-chordandedge} and~\ref{lem:chordandedge}, this is not possible.

    Thus, we may assume that $x\in V(P)$.
    Since $\{a_1, b_1\}$ separates $\{a_2,b_2\}$ and the endpoints of~$P$ in~$P$, $xy$ is not an edge of~$P$.
Thus, $xy$ is a chord of~$P$.
    Then $xy$ and $a_1b_1$ contradict (2) of Lemma~\ref{lem:shortestHpath-chordsofpath}.
\end{proof}

\subsection{Proof of Proposition~\ref{prop:boundingchord}}

We now prove Proposition~\ref{prop:boundingchord} using the structural properties of chords established in the previous subsections.

\begin{lemma}\label{lem:forestvc}
    Let~$J$ be a forest and~$Z$ be the set of vertices of degree~$1$ in~$J$ whose unique neighbor has also degree~$1$ in~$J$.
If~$k$ is the maximum size of a matching in~$J$, then $\abs{V_{\ge 2}(J)\cup Z}\le 2k$.
\end{lemma}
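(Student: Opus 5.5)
We prove Lemma~\ref{lem:forestvc} component by component. Both sides are additive over the components of~$J$: the set $V_{\ge 2}(J)\cup Z$ is the disjoint union of its traces on the components, and a maximum matching of~$J$ is the union of maximum matchings of the components. So it suffices to show that every tree~$T$ with maximum matching size~$k_T$ satisfies $\abs{V_{\ge 2}(T)\cup Z_T}\le 2k_T$, where $Z_T$ is the trace of~$Z$ on~$T$.

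First we handle the small trees.
\begin{itemize}
\item If~$T$ is a single vertex, it contributes nothing and $k_T=0$.
\item If~$T$ is a single edge, both endpoints lie in~$Z$, so $\abs{V_{\ge 2}(T)\cup Z_T}=2$, while $k_T=1$.
\item Otherwise $T$ has at least three vertices. Then no leaf has a neighbour of degree~$1$, so $Z_T=\emptyset$, and we must show $\abs{V_{\ge 2}(T)}\le 2k_T$.
\end{itemize}

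For the remaining case, let $T'=T-V_1(T)$ be the tree of internal vertices. We show by induction on~$\abs{V(T')}$ that~$T$ has a matching of size at least $\lceil \abs{V(T')}/2\rceil$.

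If $\abs{V(T')}\le 2$, one edge of~$T$ gives a matching of size~$1$, which suffices.

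Otherwise, root~$T'$ and choose a deepest vertex~$u$ of~$T'$. The case $\abs{V(T')}=1$ is already done, so~$u$ is a leaf of~$T'$. Then every child of~$u$ in~$T$ is a leaf of~$T$, and~$u$ has at least one such child~$\ell$, since~$u$ has degree at least~$2$ in~$T$. Let~$p$ be the parent of~$u$ in~$T'$.
\begin{itemize}
\item Match~$u\ell$ and~$p\ell'$, where~$\ell'$ is a suitable vertex adjacent to~$p$. To keep the induction clean, we delete~$u$, its leaf children, and~$p$, and apply induction to what remains. This gives two edges for at most $\abs{V(T')}-2$ further internal-vertex losses, except for boundary effects.
\item The cleaner alternative is the standard bound $\abs{V(T')}\le 2\nu(T)$, proved by taking a maximum matching~$M$ of~$T$ and noting that the unmatched internal vertices form an independent set, each adjacent only to matched vertices.
\end{itemize}

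We take the alternative. Let~$M$ be a maximum matching of~$T$ and let~$U$ be the set of internal vertices of~$T$ not covered by~$M$. By maximality of~$M$, every neighbour of a vertex of~$U$ is covered, and no leaf of~$T$ is adjacent to a vertex of~$U$ unless that leaf is covered. We charge each $x\in U$ to the edge of~$M$ containing the neighbour of~$x$ on the path toward a fixed root.

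The key claim is that each matching edge $ab$ receives at most two charges. Suppose~$a$ has two children $x,x'\in U$. Then $x$–$a$–$b$ could be rerouted: replacing $ab$ by~$xa$ and matching~$b$ elsewhere, or using the fact that~$x'$ is internal so it has a child, which must be covered, would give an $M$-augmenting path. This is the delicate step.

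Expected obstacle and fallback. The rerouting argument is the step I expect to be hardest. If it fails, I will instead use König's theorem: a tree is bipartite, so $\nu(T)=\tau(T)$. Let~$K$ be a minimum vertex cover of~$T$ that contains no leaves, which we can arrange by swapping each leaf in~$K$ for its neighbour. Every internal vertex is either in~$K$ or adjacent to~$K$. The desired bound $\abs{V(T')}\le 2\abs{K}$ then needs more: it does not follow from domination alone, since a star on internal vertices would violate it. So the charging must use the fact that every internal vertex outside~$K$ has all of its neighbours in~$K$, together with the greedy leaf-matching argument. Resolving this counting is the main difficulty.
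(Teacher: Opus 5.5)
\textbf{Gap.} Your reduction to components and your handling of isolated vertices and $K_2$-components (which are exactly where $Z$ lives) are fine, and they match what the paper does. The gap is that the core inequality is never established. That inequality is $\abs{V_{\ge 2}(T)}\le 2\nu(T)$ for a tree $T$ on at least three vertices. The induction is abandoned with unresolved ``boundary effects''. The K\H{o}nig fallback explicitly leaves ``resolving this counting'' open. The charging argument you adopt rests on a claim that is false, and it would not suffice even if it were true.

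Take a vertex $a$ adjacent to a leaf $b$ and to $x_1,x_2,x_3$, where each $x_i$ starts a path $x_ic_id_i$ with $d_i$ a leaf. Every matching leaves at least three of the eleven vertices uncovered, so $M=\{ab,c_1d_1,c_2d_2,c_3d_3\}$ is maximum. Then $U=\{x_1,x_2,x_3\}$, and rooting at $a$ charges all three vertices to $ab$. Moreover, ``at most two charges per edge'' would only give $\abs{U}\le 2\abs{M}$. The covered internal vertices already account for up to $2\abs{M}$, so you would get $4\nu(T)$, not $2\nu(T)$. The ``rerouting'' step is therefore not merely delicate; no per-edge charging of this kind works.

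\textbf{Missing idea.} It is sitting in your fallback. Let $K$ be a minimum vertex cover, so $\abs{K}=\nu(T)$ by K\H{o}nig, and let $A=V_{\ge 2}(T)\setminus K$. Every vertex of $A$ has all of its at least two neighbours in $K$. Hence the edges between $A$ and $K$ number at least $2\abs{A}$. These edges form a forest on $A\cup K$, so they number at most $\abs{A}+\abs{K}-1$. Thus $\abs{A}<\abs{K}$, and $\abs{V_{\ge 2}(T)}\le \abs{A}+\abs{K}\le 2\nu(T)$. Acyclicity is precisely what defeats your ``star'' worry. This is the paper's argument, applied to $J$ with its $K_2$-components removed.

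Alternatively, your first induction works if you do not delete $p$. Remove a deepest internal vertex $u$ together with its leaf children; the rest is still a tree. You gain the matching edge $u\ell$ and lose at most two internal vertices, namely $u$ and possibly $p$. This gives $\nu(T)\ge 1+\lceil(\abs{V(T')}-2)/2\rceil=\lceil \abs{V(T')}/2\rceil$.
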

\begin{proof}
    Let~$r$ be the number of components of~$J$ isomorphic to $K_2$, and let $J'$ be obtained from~$J$ by deleting these components.
Then $\abs{Z}=2r$ and the maximum matching size of $J'$ is $k-r$.

Since $J'$ is bipartite, by Kőnig's theorem (see \cite[Theorem 2.1.1]{Diestel2025}), it has a vertex cover~$X$ of size $k-r$.
Let
$A=V_{\ge 2}(J')\setminus X$.
As~$X$ is a vertex cover, every neighbor of a vertex in~$A$ belongs to~$X$.
Thus, in the forest consisting of the edges between~$A$ and~$X$, every vertex of~$A$ has degree at least~$2$.
Therefore, the number of edges between~$A$ and~$X$ is at least~$2$.
On the other hand, as $J'$ is a forest, the number of edges between~$A$ and~$X$ is at most $\abs{A}+|X|$.
Thus we have $\abs{A}\le |X|$.
Consequently,
\[
|V_{\ge 2}(J')|
\le |A|+|X|
\le 2|X|
=2(k-r).
\]
Together with $\abs{Z}=2r$, this gives
$\abs{V_{\ge 2}(J)\cup Z}\le 2(k-r)+2r=2k$.
\end{proof}

\begin{proof}[Proof of Proposition~\ref{prop:boundingchord}]
    (1) Let $w\in V_{\ge 2}(J)$.
Then~$w$ is incident with two edges in~$M$, say~$wz_1$ and~$wz_2$.
If $w\in V(H)$, then $z_1, z_2\in V(\internal(P))$ and $w\notin V(z_1Pz_2)$.
Thus, we may assume that $w\in V(\internal(P))$.

    Assume that $z_1, z_2\in V(P)$.
Then by (1) of Lemma~\ref{lem:shortestHpath-chordsofpath}, $w\notin V(z_1Pz_2)$, as desired.
So, we may assume that $z_1\notin V(P)$.
By Lemma~\ref{lem:shorthestHpath-edges-2}, $z_2$ is not contained in $V(H)\setminus V(P)$ while $z_2$ is not contained in $V(P)$ by Lemma~\ref{lem:chordandedge}.
This is a contradiction.
Thus, this case is not possible.

    \medskip

    (2) Let $M_P$ be the set of chords in~$M$ whose both endpoints are contained in~$P$, 
    and $M_H=M\setminus M_P.$
Thus, every edge in $M_H$ has one endpoint in
$V(H)\setminus V(P)$ and the other in $V(\internal(P))$.

We first consider $M_P$.
By
Lemma~\ref{lem:shortestHpath-chordsofpath}, the subpaths of~$P$
spanned by any two edges in $M_P$ are nested.
Indeed, if the two
chords are vertex-disjoint, then the other two possible configurations
are excluded by (1) and (2) of
Lemma~\ref{lem:shortestHpath-chordsofpath}.
If they share an endpoint,
their other endpoints must lie on the same side of the common
endpoint by (1) of the same lemma.

Let $J_P$ be the graph with vertex set $V(P)$ and edge set $M_P$.
We claim that $J_P$ is a forest.
Suppose otherwise, and choose an edge
$xy$ of a cycle of $J_P$ for which $xPy$ is minimal with respect to
the inclusion relation.
Let $xx'$ and $yy'$ be the other two edges of this cycle
incident with~$x$ and~$y$, respectively.
By the minimality of $xPy$
and the nesting property, both $xPx'$ and $yPy'$ properly contain
$xPy$.
But then the subpaths $xPx'$ and $yPy'$ are not
nested, a contradiction.

We next claim that every matching of $J_P$ has size at most~$3$.
Let~$F$ be a matching in $J_P$.
Let $P=p_1p_2\cdots p_m$.
At most one edge of~$F$ has an
endpoint in $\{p_1,p_m\}$.
Indeed, two such vertex-disjoint chords,
one incident with $p_1$ and the other with $p_m$, cannot be nested,
contrary to Lemma~\ref{lem:shortestHpath-chordsofpath}.
All other edges of~$F$ have both endpoints in $\internal(P)$.
By Lemma~\ref{lem:nestednew}, there are at most two such edges.
Therefore, $\abs{F}\le 3$, proving the claim.

Let $Z_P$ be the set of vertices of degree~$1$ in $J_P$ whose unique neighbor also has degree~$1$ in $J_P$.
By Lemma~\ref{lem:forestvc}, 
$V_{\ge 2}(J_P)\cup Z_P$ is a vertex cover of $J_P$ of size at most~$6$.
Let $T_P=V_{\ge 2}(J_P)\cup Z_P$.

We now consider $M_H$.
By
Lemma~\ref{lem:shorthestHpath-edges-2}, every vertex of
$\internal(P)$ is incident with at most one edge in $M_H$.
If some vertex $a\in V(H)\setminus V(P)$ is incident with at least
two edges in $M_H$, then
Lemma~\ref{lem:shorthestHpath-edges-3} implies that every edge in
$M_H$ is incident with~$a$.
In this case, we set $T_H=\{a\}$.

Otherwise, $M_H$ is a matching.
By
Lemma~\ref{lem:shorthestHpath-edges}, it has at most two edges.
So, in this case, we set $T_H$ as the set of all vertices incident with $M_H$.
In either case,   $\abs{T_H}\le 4$,
and $T_H$ contains every vertex of $V(H)\setminus V(P)$ incident with
at least two edges in $M_H$.

Furthermore, by Lemma~\ref{lem:chordandedge}, no vertex of $\internal(P)$ is incident
with both an edge of $M_H$ and an edge of $M_P$.
Together with
Lemma~\ref{lem:shorthestHpath-edges-2}, this implies that every vertex
of $V_{\ge 2}(J)\cap V(P)$ is incident with at least two edges in $M_P$.
Thus, we have 
    $V_{\ge 2}(J)\subseteq T_P\cup T_H$.

We next show that $Z\subseteq T_P\cup T_H$.
Let $z\in Z$, and let $zz'$ be the unique edge of~$M$ incident with~$z$.
Recall that $z'$ also has degree~$1$ in~$J$.

Suppose first that $zz'\in M_P$.
Since both~$z$ and $z'$ have degree
$1$ in~$J$, they also have degree~$1$ in $J_P$.
Thus,
$z\in Z_P\subseteq T_P$.

Now suppose that $zz'\in M_H$.
If some vertex
$a\in V(H)\setminus V(P)$ is incident with at least two edges in
$M_H$, then every edge in $M_H$ is incident with~$a$.
In this case,
no endpoint of an edge in $M_H$ belongs to~$Z$: the vertex~$a$ has
degree at least~$2$ in~$J$, while every other endpoint of an edge in
$M_H$ is adjacent to~$a$, which has degree at least~$2$ in~$J$.
Thus, this case cannot occur for $z\in Z$.

Therefore, $M_H$ is a matching.
By the definition of $T_H$ in this
case, $T_H$ contains all endpoints of the edges in $M_H$.
Hence
$z\in T_H$.
Therefore, we have $Z\subseteq T_P\cup T_H.$

Set $T=T_P\cup T_H.$
Then~$T$ is a vertex cover of~$J$, and
$V_{\ge 2}(J)\cup Z\subseteq T$.
Moreover, we have $\abs{T}\le |T_P|+|T_H|\le 6+4=10.$  Note that $V_{\ge 2}(J)\cup Z$ is a vertex cover of~$J$.
This proves (2).
\end{proof}

\section{Skeletons}\label{sec:skeleton}

In this section, we introduce skeletons, which are bipartite subgraphs with a controlled chord structure, and develop the main tools for extracting induced packings of even cycles from them.
We show that a skeleton with sufficiently many branching vertices contains a large induced packing of even cycles.
A key ingredient in the proof is a reduction that modifies the skeleton when a branching path meets many chords, while preserving the structural properties needed for the argument.

For a graph~$G$, a bipartite $(2,4)$-subgraph~$H$ of~$G$ is a \emph{semi-skeleton} in~$G$ if 
\begin{itemize}
   \item the set of chords of~$H$ is a matching in~$G$, 
    \item no chord of~$H$ has both endpoints at branching vertices of~$H$, and
    \item no vertex of degree~$4$ in~$H$ is incident with a chord of~$H$. 
\end{itemize}
A semi-skeleton~$H$ is a \emph{skeleton} 
if every chord of~$H$ is parity-breaking for~$H$.

We define the \emph{branching excess} of a skeleton $H$ by
\[
\epsilon(H)=\sum_{v\in V_{\ge3}(H)}(\deg_H(v)-2).
\]

For each corridor~$R$ of a semi-skeleton~$H$, we fix an arbitrary orientation of~$R$, which induces a linear order $<_{R}$ on $V(R)$. 
We observe the following.

\begin{lemma}\label{lem:skeletoncomponent}
    Let~$H$ be a skeleton of~$G$.
Then there is no chord of~$H$ incident with distinct connected components of~$H$.
\end{lemma}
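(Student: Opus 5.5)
The plan is to argue by contradiction, using only that $H$ is bipartite and that every chord of a skeleton is parity-breaking. Suppose $e=uv$ is a chord of~$H$ in~$G$ with $u\in V(K_1)$ and $v\in V(K_2)$, where $K_1$ and $K_2$ are distinct components of~$H$. We will show that $e$ is then parity-preserving, which contradicts the skeleton condition.

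The key step is to construct a bipartition of $H+e$ directly. Let $\beta$ be a bipartition indicator of~$H$. Since $K_1$ and $K_2$ are different components, flipping $\beta$ on $V(K_2)$ (replacing $\beta(z)$ by $1-\beta(z)$ for $z\in V(K_2)$) still gives a bipartition indicator $\beta'$ of~$H$. Every edge of~$H$ lies inside a single component, so it still joins vertices of different values. Choose $\beta'$ as $\beta$ or its flipped version so that $\beta'(u)\neq\beta'(v)$. Then $\beta'$ is a proper $2$-colouring of $(V(H),E(H)\cup\{e\})$. Hence this graph is bipartite, and by definition $e$ is parity-preserving for~$H$.

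This contradicts the requirement that all chords of a skeleton are parity-breaking, so no chord of~$H$ joins distinct components. There is no real obstacle in this argument. The one point to be careful about is that parity-preservation of a chord is defined through bipartiteness of $H+e$, and not through a fixed indicator~$\beta$; the freedom to re-choose the bipartition on each component is exactly what the argument uses. Note that Lemma~\ref{lem:indicatornot} needs connectivity and is therefore not applicable here, which is why we use the direct colouring argument instead.
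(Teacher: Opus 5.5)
Your proposal is correct and is essentially the paper's own argument: reverse the bipartition of one of the two components so that the endpoints of the chord receive different colours, conclude that $H+e$ is bipartite and hence that $e$ is parity-preserving, contradicting the skeleton condition. You are also right that Lemma~\ref{lem:indicatornot} is not needed here, since the direct recolouring argument suffices.
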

\begin{proof}
    Suppose that a chord~$e$ of~$H$ has its endpoints in distinct connected components of~$H$.
    Then $H+e$ is bipartite, as we may reverse the bipartition of one of the two components if necessary.
    Thus, $e$ is parity-preserving for~$H$, contradicting the definition of a skeleton.
\end{proof}

An induced $(2,4)$-subgraph of a skeleton is in general only a semi-skeleton, since a remaining chord whose endpoints lie in distinct components of the subgraph become parity-preserving.
To turn a semi-skeleton into a skeleton, we repeatedly add parity-preserving chords.
The following observation will be useful.

\begin{lemma}\label{lem:semiskeleton-ppchord}
    Let~$G$ be a graph and~$H$ be a semi-skeleton in~$G$.
If~$e$ is a parity-preserving chord of~$H$, then $H+e$ is a semi-skeleton.
\end{lemma}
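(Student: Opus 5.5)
We check the three defining properties of a semi-skeleton for $H+e$ one at a time, writing $e=xy$.

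\emph{$H+e$ is a bipartite $(2,4)$-graph.} Bipartiteness is exactly the assumption that $e$ is parity-preserving. For the degree bounds, only $x$ and $y$ change degree in $H+e$, each by one. Since the chords of $H$ form a matching and no vertex of degree~$4$ in $H$ is incident with a chord, we have $\deg_H(x),\deg_H(y)\le 3$. Hence both degrees become at most~$4$, and every other degree is unchanged, so all degrees stay in $\{2,3,4\}$.

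\emph{Chords of $H+e$.} The vertex set is unchanged, so the chords of $H+e$ are exactly the chords of $H$ other than $e$. A subset of a matching is a matching, so the first condition holds.

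\emph{Branching vertices and degree-$4$ vertices.} This is the step that needs care. The vertices $x$ and $y$ may become branching, or even reach degree~$4$, in $H+e$. We therefore verify both remaining conditions only at $x$ and $y$:
\begin{itemize}
\item No remaining chord $f$ is incident with $x$ or $y$, because the chords of $H$ form a matching and $e$ already covers $x$ and $y$.
\item Consequently, no chord of $H+e$ has an endpoint at a vertex whose status changed. Every chord that has both endpoints at branching vertices of $H+e$ would already have both endpoints at branching vertices of $H$, which the semi-skeleton property of $H$ excludes.
\item Likewise, every vertex of degree~$4$ in $H+e$ that is incident with a chord lies outside $\{x,y\}$. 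Its degree is therefore the same as in $H$, so the property for $H$ excludes it.
\end{itemize}
All three conditions hold, so $H+e$ is a semi-skeleton.
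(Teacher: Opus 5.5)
Your proposal is correct and follows the paper's proof essentially step for step. Bipartiteness comes from $e$ being parity-preserving, the bound $\deg_H(x),\deg_H(y)\le 3$ comes from the degree-$4$ condition, and the chords of $H+e$ are those of $H$ minus $e$. The matching property then keeps every remaining chord away from $x$ and $y$, so the branching and degree-$4$ conditions carry over from $H$. (Invoking the matching property in the degree bound is harmless but unnecessary.)
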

\begin{proof}
Let $e=xy$ be a parity-preserving chord of~$H$, and let $H'=H+e$.
Since~$e$ is parity-preserving for~$H$, the graph $H'$ is bipartite.

We first verify that $H'$ is a $(2,4)$-subgraph of~$G$.
Since no vertex of degree~$4$ in~$H$ is incident with a chord of~$H$,
we have $\deg_H(x),\deg_H(y)\leq 3$.
Thus, adding~$e$ increases the degrees of~$x$ and~$y$ by one, and
every vertex of $H'$ has degree between~$2$ and~$4$.

Note that the set of chords of $H'$ is precisely the set of chords
of~$H$ other than~$e$.
Hence, the chords of $H'$ form a matching.

We next show that no vertex of degree~$4$ in $H'$ is incident with
a chord of $H'$.
Let~$v$ be a vertex of degree~$4$ in $H'$.
If $\deg_H(v)=4$, then~$v$ is not incident with a chord of~$H$,
and hence not with a chord of $H'$.
Otherwise, $v$ is one of
$x$ and~$y$.
Since the chords of~$H$ form a matching and~$v$ is
incident with~$e$, it is not incident with any other chord of~$H$.
Thus, $v$ is not incident with a chord of $H'$.

Finally, suppose that a chord $uv$ of $H'$ has both endpoints at
branching vertices of $H'$.
Since the chords of~$H$ form a matching,
neither~$u$ nor~$v$ is an endpoint of~$e$.
Therefore,
\[
    \deg_{H'}(u)=\deg_H(u)
    \quad\text{and}\quad
    \deg_{H'}(v)=\deg_H(v).
\]
Hence, $u$ and~$v$ are also branching vertices of~$H$, contradicting
the fact that~$H$ is a semi-skeleton.

Therefore, $H'$ is a semi-skeleton in~$G$.
\end{proof}

A \emph{skeleton completion} of a semi-skeleton~$H$ is a skeleton obtained from~$H$ by repeatedly adding parity-preserving chords.
This can be obtained by applying Lemma~\ref{lem:semiskeleton-ppchord} repeatedly until the graph has no parity-preserving chords.
Thus, every semi-skeleton has a skeleton completion. 

We now classify the chords of a skeleton~$H$ into three types.
\begin{itemize}
    \item Let $F_0(H)$ be the set of chords $xy$ for which there exists a corridor~$P$ of~$H$ such that exactly one of~$x$ and~$y$ is an internal vertex of~$P$.
    \item Let $F_1(H)$ be the set of chords $xy$ whose both endpoints are internal vertices of the same corridor~$P$ such that there is a chord of~$H$ with exactly one endpoint in $V(xPy)$.
    \item Let $F_2(H)$ be the set of chords $xy$ whose both endpoints are internal vertices of the same corridor~$P$ such that there is no chord of~$H$ with exactly one endpoint in $V(xPy)$.
\end{itemize}

\begin{figure}[t]
    \centering
    \begin{tikzpicture}[scale=0.9,
    vertex/.style={circle, fill=black, inner sep=2pt}
]
    \draw[thick, rounded corners]
        (-4,-1.2) -- (-2,-1.2) -- (-1.6,0)
        -- (-2,1.2) -- (-4,1.2) -- (-4.5,0) -- cycle;

    \draw[thick, rounded corners]
        (4,-1.2) -- (2,-1.2) -- (1.6,0)
        -- (2,1.2) -- (4,1.2) -- (4.5,0) -- cycle;

    \node[vertex] (x) at (-1.6,0) {};
    \node[vertex] (y) at (1.6,0) {};
    \draw[very thick] (x) -- (y)
        node[midway,below] {$R$};

    \node[vertex] (a) at (-2,1.2) {};
    \node[vertex] (b) at (2,1.2) {};
    \node[above] at (a) {$a$};
    \node[above] at (b) {$b$};

    \draw[thick, blue]
        (a) to[bend left=22] node[midway,above] {$ab$} (b);
\end{tikzpicture}
\caption{In the construction of $\phi(H,R)$, the chord $ab$ is parity-breaking for~$H$.
However, after deleting $V(\internal(R))\setminus V(v_2Rv_3)$, the vertices~$a$ and~$b$ may lie in distinct components of $H'$, in which case $ab$ becomes parity-preserving for $H'$.
We then add $ab$ to $H'$, so that it is no longer a chord of $\phi(H,R)$. }
    \label{fig:skeletoncomponent}
\end{figure}

For a corridor~$R$ of~$H$ incident with at least~$4$ chords in $F_0(H)$, we define $\phi(H,R)$ as the graph obtained as follows.
\begin{itemize}
    \item Let $e_1, e_2, \ldots, e_m$ be the chords of $F_0(H)$ incident with~$R$.
For each $i\in [m]$, let $e_i=u_iv_i$ with $v_i\in V(\internal(R))$, such that $v_1<_{R}v_2<_{R}\cdots<_{R}v_m$. 
    \item Let $H'= \left(H- \big(V(\internal(R))\setminus V(v_{2}Rv_{3})\big)\right)+\{u_2v_2,u_3v_3\}$.
    In other words, $H'$ is the graph obtained from $H-V(\internal(R))$ by adding the path $u_{2}v_{2}\cup v_{2}Rv_{3}\cup v_{3}u_{3}$.
    \item Note that $H'$ may have a parity-preserving chord, see Figure~\ref{fig:skeletoncomponent}.
We define $\phi(H,R)$ to be the graph obtained from $H'$ by adding all parity-preserving chords of $H'$ in~$G$.
\end{itemize}

We show that $\phi(H,R)$ is again a skeleton with $\abs{F_0(\phi(H,R))}+\abs{F_1(\phi(H,R))}<\abs{F_0(H)}+\abs{F_1(H)}$.

\begin{lemma}\label{lem:phi-bipartite}
    Let~$G$ be a graph and~$H$ be a skeleton in~$G$.
Let~$R$ be a corridor of~$H$ incident with at least~$4$ chords in $F_0(H)$.
Then the following hold.
    \begin{enumerate}[label=(\arabic*)]
        \item $\phi(H,R)$ is skeleton.
        \item $\abs{F_0(\phi(H,R))}+\abs{F_1(\phi(H,R))}<\abs{F_0(H)}+\abs{F_1(H)}$.
    \end{enumerate}
\end{lemma}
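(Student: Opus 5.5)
For (1), I will first show that $H'$ is a semi-skeleton, and then obtain $\phi(H,R)$ by applying Lemma~\ref{lem:semiskeleton-ppchord} repeatedly. Adding all parity-preserving chords can be done one at a time, with each addition parity-preserving at that moment. The catch is that adding one chord may change the parity status of another, since it can merge two components. So I will define $\phi(H,R)$ operationally as "repeat until no parity-preserving chord remains", which is a skeleton completion of $H'$. The resulting graph has no parity-preserving chords, so it is a skeleton.

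To see that $H'$ is a semi-skeleton, I will check the following.
\begin{itemize}
\item Bipartiteness. $H-V(\internal(R))$ is bipartite, and the added path $u_2v_2\cup v_2Rv_3\cup v_3u_3$ is a path with internal vertices outside the rest of the graph. The chords $u_2v_2$ and $u_3v_3$ are parity-breaking for $H$, and $v_2Rv_3$ keeps the $H$-parity. So the new path has parity opposite to the $H$-path $u_2\cdots$ through $R$, and I need to argue that it is still consistent. If $u_2$ and $u_3$ lie in the same component of $H-V(\internal(R))$, I will compute the parity via Lemma~\ref{lem:indicatornot}: the two parity flips cancel, so $\beta(u_2)+\beta(u_3)\equiv |E(v_2Rv_3)|+2$ and the path is extendable. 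If they lie in different components, I will flip one component.
\item Degrees. Degrees stay in $\{2,3,4\}$: $u_2$ and $u_3$ gain one each and had degree at most $3$, since degree-$4$ vertices have no chords. The endpoints of $R$ lose one, which is the delicate point. An endpoint of $R$ that had degree $3$ drops to $2$, which is allowed. If it had degree $2$, then $R$ is a cycle corridor and needs separate treatment.
\item Chords. The chords of $H'$ are a subset of the chords of $H$, so they form a matching. The newly branching vertices $u_2,u_3$ are not incident with other chords, because the chords form a matching and $u_iv_i$ was their chord. Degree-$4$ vertices are also not incident with chords, by the same reasoning as in Lemma~\ref{lem:semiskeleton-ppchord}.
\end{itemize}

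For (2), I will compare chord sets. The chords $e_1$ and $e_4,\dots,e_m$ lose their endpoint $v_i$ in $R$, so they disappear. The chords $e_2$ and $e_3$ become edges. Chords with both endpoints in $\internal(R)$ outside $v_2Rv_3$ also disappear. Since $m\ge 4$, at least four $F_0(H)$ chords leave the chord set. The main obstacle is that surviving chords can change type, for example from $F_2$ to $F_0$ or $F_1$, because corridors of $\phi(H,R)$ are new. In particular, the new corridors through $u_2$ and $u_3$ split old corridors, and added parity-preserving chords create new branching vertices. I will try to bound such promotions by noting that only chords near the split points can change type. A chord $xy$ in $F_2$ becomes $F_0$ only if a new branching vertex falls inside $xPy$, and the new branching vertices are only $u_2$, $u_3$ and the endpoints of added chords. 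Each added chord was a chord of $H$, so it removes itself from $F_0\cup F_1\cup F_2$ while creating at most two splits. If $F_2$-type nesting makes at most one chord per split point change type, the net change is at most $+2$ from $u_2,u_3$ against $-4$. The added chords need a separate accounting: each contributes $-1$ and can promote at most two. Since each added chord is parity-preserving only due to a disconnection created by the deletion of $\internal(R)$, I would try to charge it to the removed $F_0$ chords. This charging step is where I expect the real difficulty. If it fails, I would refine the count using $|F_2|$ as well, or restrict the deleted edges to show that promoted $F_2$ chords must already have been in $F_1$.
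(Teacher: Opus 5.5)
Your part (1) follows the paper: the parity computation showing that $u_2v_2\cup v_2Rv_3\cup v_3u_3$ is parity-preserving for $H-V(\internal(R))$, and the degree and matching checks, are the same. There are two small points.

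First, you replace the paper's $\phi(H,R)$, which adds all parity-preserving chords of $H'$ at once, by an iterated skeleton completion, without showing the two agree. The paper settles this directly. A chord of $H'$ inside one component of $H'[V(H')\cap V(C)]$ keeps its $H$-parity and stays parity-breaking. So every parity-preserving chord of $H'$ joins the two pieces $C_1,C_2$ into which deleting $\internal(R)$ splits the component $C$. All of these chords were parity-breaking for $H$, so flipping $C_2$ makes them bichromatic simultaneously, and afterwards no parity-preserving chord remains.

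Second, you leave the cycle-corridor case open. The paper's proof also tacitly treats $R$ as a path with two branching ends.

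The genuine gap is in (2). You assume surviving $F_2(H)$ chords might be promoted to $F_0$ or $F_1$, and you try to pay for this by charging. As set up, this cannot work: the number of absorbed chords is not controlled by $m$, and each would contribute $-1$ against up to $+2$.

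The missing observation is that no promotion ever happens. Your last fallback points this way, but the fact holds outright: if $xy\in F_2(H)$, with $x,y$ internal to a corridor $P$, is still a chord of $K=\phi(H,R)$, then $xy\in F_2(K)$. Here is why.

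\begin{itemize}
\item Every edge of $K$ not in $H$ (namely $u_2v_2$, $u_3v_3$, and the absorbed chords) is a chord of $H$, and every chord of $K$ is a chord of $H$.
\item By definition of $F_2(H)$, no chord of $H$ has exactly one endpoint in $V(xPy)$.
\item No added edge has both endpoints in $V(xPy)$. Indeed, $u_2v_2,u_3v_3\in F_0(H)$, and a chord of $H$ with both ends on $xPy$ stays parity-breaking for $H'$, so it is never absorbed.
\item The path $xPy$ survives. If $P=R$, the surviving $x,y$ lie in $v_2Rv_3$ and differ from $v_2,v_3$ because the chords form a matching.
\end{itemize}

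Hence every vertex of $xPy$ keeps degree $2$ in $K$, and $x$ and $y$ remain internal to a single corridor of $K$. No chord of $K$ has exactly one endpoint on $xPy$. This gives $F_0(K)\cup F_1(K)\subseteq (F_0(H)\cup F_1(H))\setminus\{e_1,\dots,e_m\}$ directly. Strictness then follows from the disappearance of the $e_i$, which you already established, so no charging is needed.
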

\begin{proof}
     (1) Let $K=\phi(H,R)$.
     Assume that vertices and edges are labeled as in the construction of~$K$.
Also, let $H'$ be the graph $\left(H- \big(V(\internal(R))\setminus V(v_{2}Rv_{3})\big)\right)+\{u_2v_2,u_3v_3\}$. 
     Let~$p$ and~$q$ be the endpoints of~$R$.
Let~$C$ be the connected component of~$H$ containing~$R$.

      As~$u_2v_2$ and~$u_3v_3$ are parity-breaking chords, $u_2v_2\cup v_2Rv_3\cup v_3u_3$ is a parity-preserving path for $H-V(\internal(R))$.
So, $H'$ is bipartite.

     We claim that every chord of $H'$ whose endpoints lie in the same component of $H'[V(H')\cap V(C)]$ is parity-breaking for $H'$. 

     \begin{claim}\label{claim:pbchord}
         Every chord of $H'$ whose endpoints lie in the same component of $H'[V(H')\cap V(C)]$ is parity-breaking for $H'$. 
     \end{claim}
    \begin{clproof}
    Except for~$u_2v_2$ and~$u_3v_3$, which are edges of $H'$, every chord of $H'$ is a chord of~$H$.
If both endpoints of such a chord lie outside $v_2Rv_3$, then their parts are not changed, and the endpoints lie in $v_2Rv_3$, then their parts are reversed.
Thus the claim holds.
    \end{clproof}
     
    Now, suppose that $H'$ has a parity-preserving chord. 
    Note that there is no edge between $V(C)$ and $V(H)\setminus V(C)$ by Lemma~\ref{lem:skeletoncomponent}.
    Also, the construction does not change any component of~$H$ other than~$C$.
Thus, every parity-preserving chord of $H'$ has both endpoints in $V(C)$.

    By Claim~\ref{claim:pbchord}, such a chord must join two distinct components of $H'[V(H')\cap V(C)]$.
Since every internal vertex of~$R$ has degree~$2$ in~$H$, the graph $C-V(\internal(R))$ has at most two components.
Thus, it has exactly two components, say $C_1$ and $C_2$.
    Moreover, the path  $u_2v_2\cup v_2Rv_3\cup v_3u_3$ is added to one of $C_1$ and $C_2$.
    Without loss of generality, we assume that $u_2, u_3\in V(C_1)$.

    By the choice of $u_2$ and $u_3$, there is no chord of $F_0(H)$ incident with $\internal(v_2Rv_3)$.
Therefore, every parity-preserving chord of $H'$ has one endpoint in $V(C_1)$ and the other in $V(C_2)$. 
    
    In~$H$, the chords of~$H$ connecting $V(C_1)$ and $V(C_2)$ are all parity-breaking.
Thus, with respect to a fixed bipartition of~$H$, the two endpoints of every such chord lie in the same part.
Reversing the bipartition of $C_2$ makes all these chords join distinct parts.
    Therefore, adding all chords between $C_1$ and $C_2$ to $H'$ still preserve the bipartiteness. 
    This shows that~$K$ is bipartite.

    Since all parity-preserving chords of $H'$ are absorbed into~$K$, every chord of~$K$ has both endpoints in the same component of $H'[V(H')\cap V(C)]$.
Hence, by Claim~\ref{claim:pbchord}, every chord of~$K$ is parity-breaking.
    
    Note that every chord of~$K$ is a chord of $H'$, and also a chord of~$H$.
So, the set of chords of~$K$ is a matching in~$G$.

      As~$u_2$ and~$u_3$ are incident with some chord, $u_2$ and~$u_3$ are not vertices of degree~$4$ in~$H$ by definition.
So, $K$ has also maximum degree at most~$4$, and thus it is a $(2,4)$-subgraph.
    As the set of chords of~$H$ is a matching, no vertex of degree~$4$ in~$K$ is incident with a chord of~$K$.
    Also, no chord of~$K$ has both endpoints at branching vertices of~$K$.
    Indeed, the chords of~$H$ form a matching, so the degree of an endpoint of a chord of~$K$ does not increase when passing from~$H$ to~$K$.
Thus, two branching endpoints of a chord of~$K$ would also be branching endpoints of the same chord in~$H$, a contradiction.
Thus, $K$ is a skeleton.

\medskip

(2) We show that 
$F_0(K)\cup F_1(K)
    \subseteq
    \big(F_0(H)\cup F_1(H)\big)
    \setminus \{e_1,\ldots,e_m\}$.
This would imply that $\abs{F_0(K)}+\abs{F_1(K)}<\abs{F_0(H)}+\abs{F_1(H)}$.

We claim that if a chord $xy\in F_2(H)$ remains a chord of~$K$,
then $xy\in F_2(K)$.
Let~$P$ be the corridor of~$H$ containing~$x$ and~$y$ as internal
vertices.

Since $xy$ remains a chord of~$K$, the path $xPy$ is contained in
$K$.
Indeed, if $P\neq R$, no vertex of $\internal(xPy)$ is deleted
in the construction of~$K$.
If $P=R$, then both~$x$ and~$y$ lie in
$\internal(v_2Rv_3)$, as all other internal vertices of~$R$ are
deleted and neither~$x$ nor~$y$ can be $v_2$ or $v_3$ because the
chords of~$H$ form a matching.

We next show that every vertex of $\internal(xPy)$ still has degree
$2$ in~$K$.
Suppose that some $z\in\internal(xPy)$ is incident in
$K$ with an edge not belonging to~$H$.
If this edge is one of $u_2v_2$ and $u_3v_3$, then the corresponding
chord of~$H$ has exactly one endpoint in $V(xPy)$, contradicting
$xy\in F_2(H)$.

Otherwise, this edge is a parity-preserving chord of $H'$ that was
absorbed into~$K$.
Such a chord cannot have both endpoints in
$V(xPy)$, since $xPy$ is contained in one component of $H'$ and,
by Claim~\ref{claim:pbchord}, every chord of $H'$ with both endpoints
in the same component is parity-breaking.
Thus, this chord again
has exactly one endpoint in $V(xPy)$, contradicting
$xy\in F_2(H)$.

Thus, every vertex of $\internal(xPy)$ has degree~$2$ in~$K$.
Moreover, since the chords of~$H$ form a matching, neither~$x$ nor
$y$ is incident with an edge absorbed into~$K$.
Hence~$x$ and~$y$
are internal vertices of the same corridor of~$K$ containing
$xPy$.

Finally, if there is a chord of~$K$ with exactly one endpoint in
$V(xPy)$, then, as every chord of~$K$ is also a chord of~$H$, this
would contradict $xy\in F_2(H)$.
Therefore,
$xy\in F_2(K)$,
proving the claim.

It follows that 
 $F_0(K)\cup F_1(K)
    \subseteq
    F_0(H)\cup F_1(H)$.

On the other hand, none of $e_1,\ldots,e_m$ is a chord of~$K$.
Indeed, $e_2$ and $e_3$ are absorbed into~$K$ as edges, while for
$i=1$ or $i\ge 4$, the vertex $v_i$ is deleted in the construction
of~$K$.
Since
$e_1,\ldots,e_m\in F_0(H)$,
we obtain
$F_0(K)\cup F_1(K)
    \subseteq
    \big(F_0(H)\cup F_1(H)\big)
    \setminus \{e_1,\ldots,e_m\}$.
Consequently,
\[\abs{F_0(K)}+\abs{F_1(K)}\le \abs{F_0(H)}+\abs{F_1(H)}-m<\abs{F_0(H)}+\abs{F_1(H)}\]
as $m\ge 4$.
\end{proof}

\begin{lemma}\label{lem:recursion}
    Let $f(k)=208k\log k$.
Then, for every $k\ge 2$,
    $f(k)\ge f(k-1)+54\log\frac{f(k)}{2}$.
\end{lemma}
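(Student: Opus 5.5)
We need $208k\log k - 208(k-1)\log(k-1) \ge 54\log(104k\log k)$ for $k\ge2$, with logarithms to base $2$ and the convention $0\log 0=0$ (relevant only at $k=2$, where $f(1)=0$). The plan is to bound the left-hand side from below by a function of order $\log k$ plus a constant, and the right-hand side from above by something of the same shape with a smaller coefficient in front of $\log k$.

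For the left-hand side, we write
\[
208k\log k-208(k-1)\log(k-1) = 208\log k + 208(k-1)\bigl(\log k-\log(k-1)\bigr).
\]
For $k\ge 2$ the second term is nonnegative, and in fact $(k-1)\log\frac{k}{k-1}\ge \log\frac{k}{k-1}\cdot 1$. We could even use $(k-1)\ln\frac{k}{k-1}\ge \frac{k-1}{k}\ge\frac12$, but this is not needed. So we only need the bound
\[
\text{LHS}\ge 208\log k .
\]

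For the right-hand side, we use $\log k\le k$ to get
\[
\log(104k\log k) = \log 104+\log k+\log\log k \le \log 104 + 2\log k .
\]
Since $\log 104<6.71$, this gives
\[
\text{RHS}\le 54\cdot 6.71 + 108\log k < 363 + 108\log k .
\]
It therefore suffices to show $100\log k\ge 363$, which holds once $\log k\ge 3.63$, that is, for $k\ge 13$.

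The remaining cases $2\le k\le 12$ are the main obstacle, and for them I would sharpen the bound on the left-hand side. The extra term $208(k-1)\log\frac{k}{k-1}$ is at least $208\log e\cdot\frac{k-1}{k}\ge 208\cdot1.44\cdot\frac12\approx150$. This gives
\[
\text{LHS}\ge 208\log k+150 .
\]
We also sharpen the right-hand side by keeping $\log\log k$ exactly instead of bounding it by $\log k$, and then the small cases check out directly:
\begin{itemize}
\item At $k=2$ we have $\log 2=1$, so $\text{LHS}=416$, while $\text{RHS}=54\log 208\approx 54\cdot7.70\approx 416.0$. This is the tight case and needs exact arithmetic. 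Since $\log 208<7.7013$, we get $54\log 208<415.9<416$, so the inequality holds.
\item For $3\le k\le 12$, we compare $208\log k+150$ against $363+54(\log k+\log\log k)$, using $\log\log k\le \log\log 12<1.85$. The difference is
\[
154\log k+150-363-100 = 154\log k-313 ,
\]
which is positive for $\log k\ge 2.04$, that is, for $k\ge 5$.
\item For $k=3$ and $k=4$ we compute directly. At $k=4$, $\text{LHS}=1664-208\cdot3\log3\approx 675$, while $\text{RHS}=54\log 832\approx 524$. At $k=3$, $\text{LHS}=624\log3-416\approx 573$, while $\text{RHS}=54\log(312\log3)\approx 54\cdot 8.95\approx 483$.
\end{itemize}

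Together with the case $k\ge 13$, these checks cover all $k\ge 2$ and complete the proof.
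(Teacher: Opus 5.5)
Your proof is correct, but it organizes the estimate differently from the paper. Both arguments start from the decomposition $f(k)-f(k-1)=208\bigl(\log k+(k-1)\log\frac{k}{k-1}\bigr)$, and both hinge on the same tight numerical fact $54\log 208<416$ at $k=2$.

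The paper uses two sharper bounds. The first is $(k-1)\log\frac{k}{k-1}\ge 1$, which follows from $(1+\frac{1}{k-1})^{k-1}\ge 2$. The second is the tangent-line bound $\log x\le (x-1)/\ln 2$ for $x=\log k\ge 1$, which is exact at $x=1$. With these, the difference becomes $(416-54\log 208)+(x-1)(154-54/\ln 2)$. That is the $k=2$ case plus a manifestly nonnegative term, so no case distinction is needed.

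You instead use cruder bounds. For large $k$ you take $(k-1)\log\frac{k}{k-1}\ge 0$ and $\log\log k\le\log k$; for small $k$ you take $\ge\frac12\log e$ and $\log\log k\le\log\log 12$. This forces the split into $k\ge 13$ and $5\le k\le 12$, plus direct evaluations at $k=2,3,4$. Your route is elementary and every check holds; the margins at $k=3,4$ are large. However, it relies on several numerical evaluations and is less transparent than the paper's reduction. Replacing your $\frac12\log e$ by Bernoulli's bound $1$ and using the tangent-line bound on $\log\log k$ would let you drop the case split entirely.

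Two cosmetic points:
\begin{itemize}
\item $208\cdot 1.44\cdot\frac12=149.76$, so your bound ``$\ge 150$'' needs $\log e=1/\ln 2\approx 1.4427$, which gives $104/\ln 2\approx 150.04$. Any constant above about $106$ would suffice for $5\le k\le 12$, given your slack.
\item No convention $0\log 0=0$ is needed at $k=2$, since $f(1)=208\log 1=0$.
\end{itemize}
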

\begin{proof}
    Let $x=\log k$.
Since
    $\left(1+\frac{1}{k-1}\right)^{k-1}\ge 2$,
    we have
    \begin{align*}
        f(k)-f(k-1)
        =208\left(
            \log k
            +(k-1)\log\frac{k}{k-1}
        \right)
        \ge 208(x+1).
    \end{align*}
    Moreover, since $\ln x\le x-1$ for $x\ge1$,
    $\log x\le\frac{x-1}{\ln 2}$.
    Hence,
    \begin{align*}
        &208(x+1)-54\log\frac{f(k)}{2}\\
        &\quad \ge 208(x+1)-54 \left(\log \frac{208}{2}+x + \frac{x -1}{\ln2} \right)  \\
        &\quad\ge
        \bigl(416-54\log 208\bigr)
        +(x-1)\left(154-\frac{54}{\ln2}\right)
        \ge0.
    \end{align*}
    This proves the lemma.
\end{proof}

\begin{lemma}\label{lem:manybranching}
     Let~$G$ be a graph and let~$H$ be a skeleton in~$G$. 
    If $\epsilon(H)> 208k\log k$, then~$G$ contains an induced packing of~$k$ even cycles. 
\end{lemma}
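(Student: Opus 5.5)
We argue by induction on $k$, with $f(k)=208k\log k$ as in Lemma~\ref{lem:recursion}. The case $k=1$ is immediate: if $\epsilon(H)>0$ then $H$ has a vertex of degree at least~$3$, hence a component that is not a cycle, and a bipartite graph with two cycles sharing structure already contains an even cycle. Indeed, any cycle of the bipartite graph $H$ is an even cycle of $G$.

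For the inductive step, first normalize $H$ using $\phi$. As long as some corridor $R$ is incident with at least~$4$ chords of $F_0(H)$, replace $H$ by $\phi(H,R)$. By Lemma~\ref{lem:phi-bipartite}, the result is again a skeleton, and $\abs{F_0}+\abs{F_1}$ strictly decreases, so this terminates. We must check that the branching excess does not drop much. Deleting the interior of $R$ lowers the degrees of its two ends, but reattaching $u_2,u_3$ via $v_2Rv_3$ raises them, and absorbed parity-preserving chords only add degree. So we expect $\epsilon$ to be preserved; if it can decrease, it decreases by at most~$2$ per step while creating new branching at $u_2,u_3$. We would verify that $\epsilon$ is non-decreasing. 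Afterwards, every corridor meets at most $3$ chords of $F_0$.

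Next, suppress degree-$2$ vertices of $H$ to get a multigraph $H^*$ of minimum degree at least $3$ on the branching vertices. It has at most $\epsilon(H)$ vertices and $\sum(\deg-2)=\epsilon$, so its average degree exceeds $2$ by a margin. Removing the degree-2 structure, we obtain a subgraph with minimum degree~$3$ and at least about $\epsilon/2$ vertices. By Lemma~\ref{lem:girth}, $H^*$ has a cycle through at most $2\log(f(k)/2)$ branching vertices, hence using few corridors; multi-edges give cycles of length $\le2$. Lift it to a cycle $D$ of $H$, which is even since $H$ is bipartite. Now $D$ may have chords, but chords are parity-breaking. Using the $F_2$/$F_1$ structure, the fact that chords form a matching, and that $F_1$ chords on a corridor nest, we shortcut $D$ along chords into an even cycle $D'$ of $G$ that is induced, or at least whose chords we control. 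The natural choice is to take a shortest even cycle within $G[V(D)]$, which is again a short even cycle living on the same few corridors.

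Finally, delete $N_G[V(D')]$ from $H$ and pass to a skeleton completion of the remaining induced $(2,4)$-part, after also trimming degree-$1$ pieces. The key count is that the closed neighbourhood of $D'$ meets at most $O(1)$ corridors per branching vertex on $D$. Each corridor on $D$ contributes its endpoints (at most $4$ corridors through each), together with the chord neighbours, which land in at most $3$ further corridors per corridor by the $F_0$ bound. So the loss in $\epsilon$ is at most $54\log(f(k)/2)$. Lemma~\ref{lem:recursion} then gives $\epsilon>f(k-1)$ for the new skeleton. By induction we obtain an induced packing of $k-1$ even cycles there, and these avoid $N[D']$, so together with $D'$ they form an induced packing of $k$ even cycles.

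The main obstacle is this last accounting: showing that deleting $N_G[D']$ and re-completing destroys only $O(\log f(k))$ units of excess with the constant $54$. This needs care with $F_1$ chords inside corridors, chords joining corridors, and the reduction to at most $3$ $F_0$-chords per corridor.
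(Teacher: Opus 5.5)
Your proposal follows the paper's proof. When a corridor carries at least four $F_0$-chords you apply $\phi(H,R)$, and $\epsilon$ is indeed non-decreasing: $u_2,u_3$ are chord endpoints of degree at most~$3$, so their two unit gains offset the loss of at most~$2$ at the ends of~$R$. Otherwise you take a cycle of $H$ through at most about $2\log\frac{f(k)}{2}$ branching vertices from Lemma~\ref{lem:girth}, delete its at most $10\log\frac{f(k)}{2}$ outside neighbours in $H$ (two per branching vertex, three $F_0$-chords per corridor), prune, take a skeleton completion, and invoke Lemma~\ref{lem:recursion}. The detour through a shortest even cycle of $G[V(D)]$ is unnecessary, because an induced packing only needs vertex-disjoint, pairwise anticomplete cycles, and any cycle of the bipartite $H$ is already even. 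Organizing the argument as induction on $k$ with an inner $\phi$-loop, instead of on $k+\abs{F_0(H)\cup F_1(H)}$, is equally valid and spares you the paper's containment claim.
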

\begin{proof}
    Let $f(k)=208k\log k$ and $f_1(k)=104k\log k$.
    
    We prove the statement by induction on    $k+|F_0(H)\cup F_1(H)|$.
    If $k=1$, then it is immediate. 
    So, we assume that $k>1$.

    We first consider the case when every corridor of~$H$ meets at most~$3$ chords in $F_0(H)$. 
    Since every vertex of~$H$ has degree at most~$4$ in~$H$, we have $\epsilon(H)\le 2|V_{\ge 3}(H)|$ and \[|V_{\ge 3}(H)|\ge \epsilon(H)/2 > f(k)/2=f_1(k).\]    
        By applying Lemma~\ref{lem:girth} to the graph obtained from~$H$ by suppressing all degree-$2$ vertices, $H$ contains a cycle~$C$ having at most $2\log f_1(k)$ vertices of $V_{\ge 3}(H)$.

        Note that each branching vertex of~$H$ contained in~$C$ has at most two neighbors in $N_G(V(C))\cap V(H)$.
Moreover, since the set of chords forms a matching, each vertex of degree~$2$ in~$H$ contained in~$C$ has at most one neighbor in  $N_G(V(C))\cap V(H)$.
        As every corridor of~$H$ meets at most~$3$ chords in $F_0(H)$, 
        \[\abs{N_G(V(C))\cap V(H)}\le 2\cdot 2\log f_1(k) + 1\cdot 3\cdot 2\log f_1(k)=10\log f_1(k).\]
        Since every vertex in $N_G(V(C))\cap V(H)$ has degree at most~$4$ in $G[V(H)]$ and has a neighbor in $V(C)$, it has at most three neighbors in $V(H)\setminus V(C)$.
After deleting such a vertex from $H-V(C)$, the recursive deletion of degree-$1$ vertices propagates along at most three corridors, and hence can cause at most three additional branching vertices to cease being branching vertices.
        
        Let $H'$ be the graph obtained from $H-V(C)-N_G(V(C))\cap V(H)$ by repeatedly deleting vertices of degree at most~$1$. 
        Note that deleting a vertex in $V(C)\cup (N_G(V(C))\cap V(H))$ can decrease $\epsilon(H)$ by at most~$2$ through the contribution of the deleted vertex itself.
        By Lemma~\ref{lem:recursion}, we have 
        \begin{align*}
            \epsilon(H')&\ge \epsilon(H) -2(2\log f_1(k) + 10\log f_1(k))-30\log f_1(k) \\
            &= \epsilon(H) -54\log f_1(k) \\
            &> f(k)-54\log f_1(k)\ge f(k-1). 
        \end{align*}

     Since $H'$ is a $(2,4)$-subgraph of~$H$, $H'$ is a semi-skeleton in~$G$.
Let $H''$ be a skeleton completion of $H'$.
As $H''$ is obtained from $H'$ by adding some edges, $\epsilon(H'')\ge \epsilon(H')$.

    We claim that $F_0(H'')\cup F_1(H'')\subseteq F_0(H)\cup F_1(H)$, which allows us to apply the induction hypothesis to $H''$.

    \begin{claim}\label{claim:containment}
        $F_0(H'')\cup F_1(H'')\subseteq F_0(H)\cup F_1(H)$.
    \end{claim}
    \begin{clproof}
        Let $e=xy\in F_0(H'')\cup F_1(H'')$.
Since $H''$ is obtained from $H'$ by adding some chords of $H'$
as edges, every chord of $H''$ is also a chord of $H'$, and hence
a chord of~$H$.

Suppose for a contradiction that $e\notin F_0(H)\cup F_1(H)$.
Then $e\in F_2(H)$.
Let~$P$ be the corridor of~$H$ containing
$x$ and~$y$ as internal vertices.
By the definition of $F_2(H)$,
there is no chord of~$H$ having exactly one endpoint in $V(xPy)$.

Since both~$x$ and~$y$ remain in $H'$, no vertex of
$\internal(xPy)$ is deleted in the construction of $H'$;
otherwise, the subsequent deletion of degree-$1$ vertices along
$P$ would delete at least one of~$x$ and~$y$.
Thus, $xPy$ is contained in $H'$.

Moreover, no edge added in obtaining the skeleton completion $H''$
has exactly one endpoint in $V(xPy)$, since every such added edge
is a chord of~$H$.
Consequently, no internal vertex of $xPy$ becomes a branching
vertex when transforming from $H'$ to $H''$.
Since the chords of~$H$ form a matching, neither~$x$ nor~$y$ is
incident with an added edge.
Thus, $x$ and~$y$ remain internal vertices of the same corridor
of $H''$.

Finally, every chord of $H''$ is a chord of~$H$, so there is no
chord of $H''$ having exactly one endpoint in $V(xPy)$.
Therefore, $e\in F_2(H'')$, a contradiction.
This proves the claim.
    \end{clproof}
    
    By Claim~\ref{claim:containment}, 
    we have $\abs{F_0(H'')\cup F_1(H'')}\le |F_0(H)\cup F_1(H)|$.
    Since $\epsilon(H'')>f(k-1)$, by the induction hypothesis, $H''$ contains an induced packing of $k-1$ even cycles. 
    Since $H'$ is obtained from~$H$ by deleting $V(C)$ and all vertices of~$H$ adjacent to~$C$ in~$G$, there is no edge of~$G$ between $V(C)$ and $V(H')$.
As $H''$ has the same vertex set as $H'$, there is no edge of~$G$ between $V(C)$ and $V(H'')$ either.
    Therefore, together with~$C$, $G$ contains an induced packing of~$k$ even cycles.

    \medskip
    Now, we consider the case when there is a corridor~$R$ meeting at least
$4$ chords in $F_0(H)$.
In this case, we consider $\phi(H,R)$.
Let $K=\phi(H,R)$.
By (1) of Lemma~\ref{lem:phi-bipartite}, $K$ is a skeleton.
Assume that vertices and edges are labeled as in the construction of~$K$.

We observe that $\epsilon(K)\ge \epsilon(H)$.
In the construction of $H'$, deleting the internal vertices of
$R$ outside $v_2Rv_3$ decreases the degrees of the two ends of~$R$ by
one, and hence decreases $\epsilon$ by at most~$2$ in total.
On the other hand, adding $u_2v_2$ and $u_3v_3$ increases the degrees
of $u_2$ and $u_3$ by one, respectively.
Since neither $u_2$ nor $u_3$
has degree~$4$ in~$H$, this increases $\epsilon$ by~$2$ in total.
Moreover, adding parity-preserving chords in the construction of
$K$ can only increase $\epsilon$.
Thus,
$\epsilon(K)\ge \epsilon(H)>f(k)$.

Moreover, by (2) of Lemma~\ref{lem:phi-bipartite},
$\abs{F_0(K)\cup F_1(K)}
    <
    \abs{F_0(H)\cup F_1(H)}$.
Hence, the induction parameter $k+|F_0(K)\cup F_1(K)|$
is strictly smaller than
$k+|F_0(H)\cup F_1(H)|$.
Therefore, by the induction hypothesis applied to~$K$,
the graph~$G$ contains an induced packing of~$k$ even cycles.
\end{proof}

\section{Path-matching graphs}\label{sec:pathmatching}

In this section, we study path-matching graphs, consisting of a collection of vertex-disjoint paths together with a matching on their vertices.
Our main goal is to show that, under a mild restriction on the arrangement of the matching edges, sufficiently many matching edges guarantee many vertex-disjoint cycles containing an even number of matching edges.
This will be used later to control the chords arising in a bipartite ear decomposition.

A graph~$G$ is a \emph{path-matching} graph if $G=(P_1\cup\cdots\cup P_m) + N$ where $P_1,\dots,P_m$ are vertex-disjoint paths and~$N$ is a matching on $V(G)$ such that
$N\cap \bigcup_{i\in[m]}E(P_i)=\emptyset$.
For each $i\in [m]$, we fix a linear ordering $<_{P_i}$ of $V(P_i)$ induced by traversing $P_i$ from one endpoint to the
other. 

The edges in~$N$ will represent parity-breaking paths of length
at least~$2$, rather than actual edges of the underlying graph.
For this reason, we refer to them as \emph{links}.
A cycle of~$G$ is an \emph{even-$N$ cycle} if it contains an even number of links in~$N$. 

A link $uv\in N$ is called a \emph{same-path link} if
$u,v\in V(P_i)$ for some $i\in [m]$.
For such a link $uv$, we denote by $I_{uv}$ the subpath $uP_iv$.

Two distinct same-path links $e=ab$ and $f=cd$ in~$N$ are called
\emph{serial} if their endpoints lie on the same path $P_i$ and,
after possibly exchanging~$e$ and~$f$ and their endpoints,
$a<_{P_i} b<_{P_i} c<_{P_i} d$.
Moreover, two serial links $e=ab$ and $f=cd$, ordered as above, are called \emph{consecutive serial} if
$(V(aP_i d)\setminus \{a,b,c,d\})\cap V(N)=\emptyset$,
where $V(N)$ denotes the set of all endpoints of links in~$N$.
In other words, $a,b,c,d$ appear consecutively in $V(N)\cap V(P_i)$ with respect to $<_{P_i}$.
We say that~$G$ satisfies the \emph{consecutive serial property} if it contains no pair of consecutive serial links.

A vertex set $Y\subseteq V(G)$ is said to be \emph{closed under~$N$} if for every $uv\in N$, $u\in Y$ implies $v\in Y$.
We will need a hitting set for all even-$N$ cycles that is closed under~$N$.

We will use the ordinary \ep property of $\theta$-minor models.
The $\theta$ graph is the multigraph consisting of two vertices with three parallel edges between them.
\begin{theorem}[Chatzidimitriou,  Raymond, Sau, and Thilikos~\cite{ChatRST2018}]\label{thm:thetaminor}
    There is a function $g_{\theta}(k)=O(k\log k)$ satisfying the following.
For every graph~$G$ and integer~$k$, $G$ contains either~$k$ vertex-disjoint $\theta$-minor models, or a vertex set~$X$ with $\abs{X}\le g_{\theta}(k)$ such that $G-X$ has no $\theta$-minor models.
Moreover, one of these two outcomes can be found in polynomial time.
\end{theorem}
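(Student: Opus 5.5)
The result is cited from Chatzidimitriou, Raymond, Sau, and Thilikos~\cite{ChatRST2018}, but I would reprove it along the same lines as Lemma~\ref{lem:manybranching}. The argument combines a reduction to multigraphs of minimum degree at least~$3$ with a greedy extraction of short $\theta$-models, which are found using Lemma~\ref{lem:girth}. The basic observation is that a graph has a $\theta$-minor model exactly when some block is neither an edge nor a cycle. Equivalently, it has one exactly when the multigraph obtained by repeatedly deleting vertices of degree at most~$1$ and suppressing vertices of degree~$2$ still contains a vertex of degree at least~$3$, or an edge of multiplicity at least~$3$. In that multigraph, a loop at a vertex of degree~$3$, or a triple edge, already gives a $\theta$-model.

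First, I apply this reduction to~$G$. Let $\widehat G$ be the resulting multigraph, where loops and parallel edges are allowed and every vertex has degree at least~$3$, and let $n=\abs{V(\widehat G)}$. Every vertex of $\widehat G$ is an actual vertex of~$G$. I take $X=V(\widehat G)$. After deleting~$X$ from~$G$, the remaining vertices induce the interiors of the suppressed paths together with pieces that were pruned as trees hanging off them. Hence every block of $G-X$ is an edge or a cycle, and $G-X$ has no $\theta$-minor model. It therefore suffices to prove the following claim: if a multigraph $\widehat G$ of minimum degree at least~$3$ has no $k$ vertex-disjoint $\theta$-minor models, then $n\le f(k)$ for some $f(k)=O(k\log k)$. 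Deleting vertices of degree at most~$1$ and suppressing degree-$2$ vertices does not change which $\theta$-minor models exist, so disjoint models found in $\widehat G$ lift to disjoint models in~$G$.

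For the claim I use induction on~$k$ in the style of Lemma~\ref{lem:manybranching}, with $f(k)=ck\log k$. Lemma~\ref{lem:girth} gives a cycle~$C$ in $\widehat G$ of length at most $2\log n$. Since the minimum degree is at least~$3$, some vertex of~$C$ has a further incident edge. Following a breadth-first search out of~$C$, I get either a second short cycle that touches~$C$, or a short path leaving~$C$ and returning to it. In either case there is a $\theta$-subgraph~$T$ with $\abs{V(T)}=O(\log n)$. I delete $V(T)$ and re-reduce the multigraph.

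The main point to check is that this deletion costs only $O(\log n)$ vertices of the reduced graph. I would handle this with the branching-excess bookkeeping from the proof of Lemma~\ref{lem:manybranching}. Deleting a single vertex~$v$ removes~$v$ itself. The subsequent cascade of pruning and suppression only propagates along the at most $\deg(v)$ corridors at~$v$, and each such corridor destroys at most one further branch vertex. Degrees can be large, so instead of counting vertices I count the quantity $\sum_v(\deg(v)-2)$, which is comparable to~$n$. The edges of~$\widehat G$ leaving $V(T)$ are the only ones whose loss can cause cascades. I expect the main obstacle to be ensuring that the number of such edges is $O(\log n)$. To get this I would choose~$T$ inside a minimal counterexample, or first choose~$C$ as a shortest cycle so that its vertices have few extra edges. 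If a vertex has large degree, I would instead use it as the centre of many short cycles, which directly yields many disjoint $\theta$-models.

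Given that bound, the remaining reduced graph has at least $n-c'\log n$ vertices and contains no $k-1$ disjoint $\theta$-models. The induction hypothesis then gives the recursion $f(k)\ge f(k-1)+c'\log f(k)$, which is verified as in Lemma~\ref{lem:recursion} for a suitable constant~$c$. The polynomial-time algorithm follows directly. Reduction, breadth-first search for short cycles, and the greedy extraction of thetas all run in polynomial time. The procedure outputs either $k$ disjoint models or the set $X=V(\widehat G)$.
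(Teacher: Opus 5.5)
The paper gives no proof of this statement; it quotes it from \cite{ChatRST2018}. Your reproof has a genuine gap, because its central reduction claim is false. The ``equivalently'' in your basic observation fails: cycle blocks that meet at cut vertices survive your reduction. Two triangles sharing a vertex reduce to a single vertex with two loops, of degree~$4$. A chain of $N$ cycles in which consecutive cycles share one vertex reduces to $N-1$ vertices of degree~$4$ joined by double edges. In both cases every block is a cycle, so there is no $\theta$-minor model at all. Consequently $X=V(\widehat G)$ is not a bounded hitting set. Your claim that a multigraph of minimum degree at least~$3$ with no $k$ disjoint $\theta$-models has at most $f(k)$ vertices already fails for $k=1$. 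The extraction step breaks for the same reason. Applied to $\widehat G$, Lemma~\ref{lem:girth} may return a loop or a digon. A second cycle meeting $C$ in a single vertex gives a figure-eight, not a $\theta$. Since every $\theta$-subdivision is $2$-connected and lies inside one block, a correct reduction must first dispose of such cycle blocks.

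Even after that repair, the claim is false for simple graphs of minimum degree~$3$. In the wheel $W_N$, deleting the hub leaves a cycle. Hence every $\theta$-subgraph contains the hub, and so does every $\theta$-minor model, since $\theta$ is subcubic. So there are no two disjoint models, while $n=N+1$. This is exactly the unbounded-degree obstacle you flagged, and your remedy does not work: the many short cycles through a high-degree vertex all share that vertex, so they do not yield disjoint models. The cascade bookkeeping you borrow from Lemma~\ref{lem:manybranching} works there only because a skeleton has maximum degree~$4$. What is missing is a bounded-degree frame, for instance a maximal subgraph that is a subdivision of a subcubic multigraph in the spirit of Simonovits, or a skeleton-like structure. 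The argument should then split into two cases. If the frame has $\Omega(k\log k)$ branch vertices, short $\theta$'s can be peeled off at cost $O(\log n)$ each. Otherwise, its few branch vertices, suitably augmented, must be shown to hit every $\theta$; the hitting set is then not the whole reduced vertex set. Proving that second case is the substantive content of the cited result, and your proposal does not supply it.
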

Note that the degree of the polynomial in the running time of Theorem~\ref{thm:thetaminor} depends on the algorithm of Bazgan, Tuza, and Vanderpooten~\cite{BazganTV2007} for partitioning a graph into subgraphs with prescribed minimum degree.
Although they did not explicitly state the running time, a straightforward implementation of their algorithm shows that the running time in Theorem~\ref{thm:thetaminor} can be bounded by $O(n^5)$.
Cames van Batenburg, Huynh, Joret, and Raymond~\cite{CamesHJR2019} generalized this result to planar~$H$-minors, obtaining an $O_H(k\log k)$ Erdős--Pósa bound together with a polynomial-time algorithm for finding one of the two outcomes.

We prove the following. 

\begin{lemma}\label{lem:counting-theta-free-virtual-links}
Let $J=(P_1\cup\cdots\cup P_m) + N$ be a path-matching graph satisfying the consecutive serial property and let $F=P_1\cup\cdots\cup P_m$.
Let $Y\subseteq V(J)$ be closed under~$N$ such that $J-Y$ has no $\theta$-minor model.
Let~$q$ be the number of components of $F-Y$, and $N_Y=\{uv\in N:u,v\notin Y\}$.
Then $\abs{N_Y}\le 11q$.
\end{lemma}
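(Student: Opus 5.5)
The plan is to reduce $J-Y$ to a theta-minor-free multigraph on $q$ vertices. Links inside a single component are then counted using the consecutive serial property.

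First note that, since $Y$ is closed under~$N$, every link of $N\setminus N_Y$ has both endpoints in~$Y$. Hence $J-Y=(F-Y)+N_Y$, and the components $Q_1,\dots,Q_q$ of $F-Y$ are paths, namely subpaths of the $P_i$. Split $N_Y$ into \emph{cross links}, whose endpoints lie in distinct components $Q_j$, and \emph{internal links}, whose endpoints lie in the same $Q_j$. Every internal link is a same-path link of~$J$, and its interval $I_{uv}$ is contained in $Q_j$.

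Step 1 (cross links). Contract each $Q_j$ to a single vertex and delete the internal links. The result is a loopless multigraph~$D$ on $q$ vertices whose edges are exactly the cross links. Since $D$ is a minor of $J-Y$ (contracting connected subgraphs and deleting edges), $D$ has no $\theta$-minor. In particular $D$ has no three parallel edges, and every block of~$D$ is a single edge or a cycle, where a cycle may have length~$2$, i.e.\ be a pair of parallel edges. A block with $b$ vertices has at most $\max(2,b)\le 2(b-1)$ edges for $b\ge2$. Summing over blocks gives at most $2(q-1)$ cross links.

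Step 2 (internal links on one $Q_j$). Let $ab$ and $cd$ be two internal links on $Q_j$ whose intervals are not disjoint, so they cross or are nested. Then the cycles $aQ_jb+ab$ and $cQ_jd+cd$ share at least two vertices, and their union contains a $\theta$-minor model in $J-Y$, a contradiction. (If they share an endpoint this is impossible anyway, since $N$ is a matching.) Hence the $s_j$ internal links on $Q_j$ have pairwise disjoint intervals. Order them as $e_1,\dots,e_{s_j}$ along $Q_j$.

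For consecutive $e_t=ab$ and $e_{t+1}=cd$ with $a<b<c<d$, the two links are serial in~$J$. By the consecutive serial property, some endpoint $x$ of a link of~$N$ lies in $V(aP_id)\setminus\{a,b,c,d\}$. Since $aP_id\subseteq Q_j$ avoids~$Y$, the link at $x$ lies in $N_Y$ by closedness. The vertex $x$ is not in $\internal(I_{e_t})$ or $\internal(I_{e_{t+1}})$, because an internal link there would be nested and a cross link there together with the cycle would need checking; to avoid that case analysis I would instead choose the witness $x$ strictly between $b$ and $c$, and argue this is possible as follows. If every witness lay inside $I_{e_t}$ or $I_{e_{t+1}}$, it would be an endpoint of a cross link there. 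One then applies the consecutive serial property to the pair obtained by shrinking, or accepts such witnesses directly. Either way, each gap between consecutive internal links on $Q_j$ is charged to a distinct cross-link endpoint on $Q_j$ lying in or near that gap, with each endpoint charged at most twice. An endpoint strictly between $b$ and $c$ cannot belong to an internal link, since that link's interval would have to lie between $e_t$ and $e_{t+1}$, contradicting their consecutiveness. This gives $s_j\le 1+2c_j$, where $c_j$ is the number of cross-link endpoints on $Q_j$.

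Step 3 (summing). Using $\sum_j c_j=2\cdot\#\text{cross}$,
\[
|N_Y| \le \#\text{cross}+\sum_j (1+2c_j)\le 2q+q+4\cdot 2q = 11q .
\]
This matches the constant in the statement.

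The main obstacle is the charging in Step 2. One must make sure that the witness supplied by the consecutive serial property (which refers to all of $V(N)$ in $J$) is really a cross-link endpoint in $J-Y$ and not inside an interval in a way that breaks the charging. One must also control how many gaps a single cross-link endpoint can be charged to; the factor~$2$ above is the slack I would allot for this. If the direct argument is delicate, I would fall back on the cactus structure: a cross link attached inside an interval $I_{e_t}$ together with the cycle of $e_t$ and a path through $D$ back to $Q_j$ would create a $\theta$, which limits such configurations.
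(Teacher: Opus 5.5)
Your proof follows the paper's route. You contract the components of $F-Y$ to get a cactus multigraph, which bounds the cross links by $2(q-c)\le 2q$. You use the $\theta$-argument to make the intervals of the internal links on each component pairwise disjoint. You charge each gap between consecutive internal links to a cross-link endpoint supplied by the consecutive serial property. The arithmetic $|N_Y|\le 5r+q\le 11q$ is the same. The one step you leave unresolved is the charging in Step~2, and one of the two options you offer there fails.

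The witness cannot always be chosen strictly between $b$ and $c$. Let $Q_j$ be the path $a\,x\,b\,c\,y\,d$ with internal links $ab$ and $cd$, plus a single cross link from $x$ to another component. Then $J$ is a cactus and the consecutive serial property holds. Since $b$ and $c$ are adjacent, the only witness is $x\in\internal(I_{ab})$.

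You need neither shrinking nor the cactus fallback. Accept any witness in the window
$W_t=V(a_tQ_jb_{t+1})\setminus\{a_t,b_t,a_{t+1},b_{t+1}\}$, and argue as follows.
\begin{itemize}
\item A witness is never an endpoint of an internal link on $Q_j$: the intervals are disjoint and ordered, so the only internal-link endpoints in $a_tQ_jb_{t+1}$ are the four excluded ones. Hence the witness is the unique $Q_j$-endpoint of a cross link, and distinct witnesses give distinct cross links.
\item $W_t\cap W_{t'}=\emptyset$ whenever $|t-t'|\ge 2$, because $b_{t+1}<_{Q_j}a_{t+2}$. So each vertex lies in at most two windows.
\end{itemize}
Together these give $s_j-1\le 2c_j$, as you claimed. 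The paper does exactly this, phrased as choosing witnesses only for every other gap; this makes them pairwise distinct and yields $\lfloor t/2\rfloor\le\deg_{K_0}(v_R)$.
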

\begin{proof}
Let $J_Y=J-Y$, $F_Y=F-Y$, and let~$K$ be the multigraph obtained from $J_Y$ by contracting every component of $F_Y$ to one vertex.
Thus, every edge of $N_Y$ becomes either
a loop or a non-loop edge of~$K$.
Let
$\ell$ be the the number of loops of~$K$, and $r=|N_Y|-\ell$.

Let $K_0$ be obtained from~$K$ by deleting all loops, and let
$c$ be the number of components of $K_0$.
Since $K_0$ is a minor of $J-Y$, it has no
$\theta$-minor model.
Thus, every block of $K_0$ is either an isolated vertex, a single
edge, or a cycle.
In other words, $K_0$ is a cactus
multigraph.

For every connected component~$C$ of $K_0$, let $b(C)$ denote
the number of cycle blocks of~$C$.
Since~$C$ is a cactus,
$\abs{E(C)}=|V(C)|-1+b(C)$. 
As $b(C)\le |V(C)|-1$.
we have 
$\abs{E(C)}\le 2(|V(C)|-1)$.
Summing over the components of $K_0$, we obtain
$r=|E(K_0)|\le 2(q-c)$.

We next bound the number $\ell$ of loops.
Let~$R$ be a component of $F_Y$, and let $v_R$ be the
corresponding vertex of~$K$. 
We use $<_{R}$ for the ordering
of $V(R)$ inherited from the path of $\{P_i:i\in [m]\}$ containing~$R$.

We first observe that the intervals of any two distinct links in
$N_Y$ whose endpoints are both contained in~$R$ are disjoint.
Indeed, let $e=ab$ and $f=cd$ be two such links, and without loss of generality, we assume that $a<_R b$, $c<_R d$, and $a<_R c$.
Since~$N$ is a matching, the four endpoints are distinct.
If $I_e$ and $I_f$ intersect, then either
$
a<_{R}c<_{R}b<_{R}d$
or
$
a<_{R}c<_{R}d<_{R}b$.
In the first case, $R+\{e,f\}$ contains a subdivision of
$\theta$ with branch vertices~$c$ and~$b$, and in the second
case it contains a subdivision of $\theta$ with branch vertices
$c$ and~$d$.
This contradicts the assumption that $J-Y$ contains no
$\theta$-minor model.

Let $e_1,\ldots,e_t$ be all links in $N_Y$ whose two endpoints
lie in~$R$.
By the observation above, we may write
$e_j=a_jb_j$ and index them so that
\[
a_1<_{R}b_1<_{R}a_2<_{R}b_2<_{R}\cdots
<_{R}a_t<_{R}b_t.
\]
By the consecutive serial condition, for each $i\in[t-1]$, $a_iRb_{i+1}-\{a_i, b_i, a_{i+1}, b_{i+1}\}$ contains an endpoint of some link of~$N$, say $z_i$.
Let $f_i\in N$ be the link incident with $z_i$.
Since $z_i\in V(R)\subseteq V(F)\setminus Y$ and~$Y$ is closed
under~$N$, both endpoints of $f_i$ lie outside~$Y$.
This implies that 
$f_i\in N_Y$.
As $e_1, \ldots, e_t$ are all links in $N_Y$ whose two endpoints lie in~$R$, $f_i\notin \{e_1, \ldots, e_t\}$, and $f_i$ is incident with a component of $F_Y$ other than~$R$.
That is, $f_i$ is a non-loop edge of~$K$.

For each $j\in \left[\lfloor t/2\rfloor\right]$, let $f_j'=f_{2j-1}$.
By the construction, $f_1', \ldots, f_{\lfloor t/2\rfloor}'$ are pairwise distinct.
It follows that
$\lfloor t/2\rfloor \le \deg_{K_0}(v_R)$
and 
$t\le 2\deg_{K_0}(v_R)+1$.
Summing it over all components~$R$ of $F_Y$, we obtain
\[
\ell
\le
2\sum_{v\in V(K_0)}\deg_{K_0}(v)+q
=
4r+q.
\]
Thus, we get
$\abs{N_Y}
=\ell+r\le 5r+q\le 10(q-c)+q=11q-10c\le 11q$,  as desired.
\end{proof}

\begin{proposition}\label{prop:many-links}
Let $J=(P_1\cup\cdots\cup P_m) + N$ be a path-matching graph satisfying the consecutive serial property.
If
$\abs{N}>11m+23g_\theta(k)$,
then~$J$ contains~$k$ pairwise vertex-disjoint even-$N$
cycles.
\end{proposition}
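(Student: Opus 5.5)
The plan is to apply Theorem~\ref{thm:thetaminor} to $J$ itself, combined with a parity observation about $\theta$-subgraphs.

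\textbf{Parity observation.} Every $\theta$-subgraph of $J$ contains an even-$N$ cycle. Label each edge $1$ if it is a link and $0$ otherwise. A $\theta$-subdivision consists of three internally disjoint paths $Q_1,Q_2,Q_3$ between two branch vertices. Two of them have the same parity of link count, and their union is an even-$N$ cycle.

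Since $J$ has maximum degree at most $3$, every $\theta$-minor model in $J$ contains a $\theta$-subdivision, hence an even-$N$ cycle. So if Theorem~\ref{thm:thetaminor} returns $k$ vertex-disjoint $\theta$-minor models, we obtain $k$ pairwise vertex-disjoint even-$N$ cycles. Otherwise we get a set $X$ with $\abs{X}\le g_\theta(k)$ such that $J-X$ has no $\theta$-minor model, and we aim for a contradiction with $\abs{N}>11m+23g_\theta(k)$.

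\textbf{Closing $X$ under $N$.} Lemma~\ref{lem:counting-theta-free-virtual-links} needs a set closed under $N$. Let $Y=X\cup\{v:uv\in N,\ u\in X\}$. Then $Y$ is closed under $N$, $\abs{Y}\le 2\abs{X}$, and $J-Y$ is still $\theta$-minor-free.

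\textbf{Counting links.} Let $q$ be the number of components of $F-Y$. Each deleted vertex splits a path into at most one additional piece, so $q\le m+\abs{Y}\le m+2g_\theta(k)$.

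The links not in $N_Y$ each have an endpoint in $Y$. Since $N$ is a matching, there are at most $\abs{Y}\le 2g_\theta(k)$ of them.

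By Lemma~\ref{lem:counting-theta-free-virtual-links},
\[
\abs{N}\le 11q+\abs{Y}\le 11m+22g_\theta(k)+2g_\theta(k)=11m+24g_\theta(k).
\]

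\textbf{Closing the gap.} This bound is slightly too weak: we need $23g_\theta(k)$, not $24g_\theta(k)$. The count must be sharpened, and I expect this to be the only delicate step.

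The refinement is to charge the vertices of $X$ and those of $Y\setminus X$ differently. Vertices of $Y\setminus X$ are matched partners of $X$-vertices. Each link with an endpoint in $Y$ is charged to its endpoint in $X$, so at most $\abs{X}$ links lie outside $N_Y$.

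For the component count, one can instead count components of $F-X$ via Lemma~\ref{lem:counting-theta-free-virtual-links}. Alternatively, observe that deleting a partner vertex $v\in Y\setminus X$ removes only the single link at $v$, which was already charged to $X$. So we may choose not to delete $v$, but to keep it and treat it as having no link. The consecutive serial property could be affected by removing links, so one has to check whether the lemma still applies, possibly with a slightly modified component count.

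Concretely, I would aim for
\[
q\le m+2\abs{X}, \qquad \abs{N\setminus N_Y}\le \abs{X},
\]
which gives $\abs{N}\le 11m+22g_\theta(k)+g_\theta(k)=11m+23g_\theta(k)$. That contradicts the hypothesis and completes the proof.
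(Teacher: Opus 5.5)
Your proof is correct and follows the paper's argument step for step: the parity step on $\theta$-subdivisions, closing $X$ under $N$, the bound $q\le m+2|X|$, and the observation that every link meeting $Y$ has an endpoint in $X$ (so at most $|X|$ links are lost). The alternatives you float in the ``closing the gap'' paragraph (counting components of $F-X$, or keeping partner vertices and dropping their links) are unnecessary and can be deleted, since that charging observation already gives $|N|\le 11m+23g_\theta(k)$.
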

\begin{proof}
Let $F=P_1\cup\cdots\cup P_m$.

Suppose first that~$J$ contains~$k$ pairwise vertex-disjoint
$\theta$-minor models.
Since the $\theta$ graph has maximum degree three, each such model contains
a subdivision of $\theta$.
Let $R_1,R_2,R_3$ be the three
internally vertex-disjoint paths between the two branch vertices of
one such subdivision.
Among $R_1,R_2,R_3$, two contain the same parity of links in~$N$.
The union of these two paths is therefore an even-$N$ cycle.
Choosing one such cycle from each of the~$k$ vertex-disjoint
$\theta$-minor models gives~$k$ pairwise vertex-disjoint even-$N$
cycles.

We may therefore assume that~$J$ contains no~$k$ pairwise
vertex-disjoint $\theta$-minor models.
By Theorem~\ref{thm:thetaminor}, there exists
a set $X\subseteq V(J)$ with $\abs{X}\le g_\theta(k)$
such that $J-X$ has no $\theta$-minor model.

We extend~$X$ to a set closed under~$N$.
Namely, let
\[\overline X=
X\cup
\{v\in V(F):uv\in N\text{ for some }u\in X\}.\]
Since~$N$ is a matching, we have $\abs{\overline X}\le 2|X|$.
Moreover, $\overline X$ is closed under~$N$, and
$J-\overline X$
has no $\theta$-minor model.

Let $N'$ be the set of all links $uv$ in~$N$ such that $\{u,v\}\cap X=\emptyset$.
Since~$N$ is a matching, at most $\abs{X}$ edges of~$N$ meet~$X$.
Thus
we have $\abs{N'}\ge |N|-|X|$.

Let~$q$ be the number of components of
$F-\overline X$.
Deleting one vertex from a disjoint union of paths increases the number of
components by at most one.
Therefore, we have 
$q\le m+|\overline X|
\le m+2|X|$.

Applying Lemma~\ref{lem:counting-theta-free-virtual-links} with
$Y=\overline X$, we obtain
$\abs{N'}\le 11q$.
Combining the previous inequalities, we obtain
$\abs{N}-|X|
\le |N'|
\le 11q
\le 11m+22|X|$.
Therefore,
$\abs{N}
\le 11m+23|X|
\le 11m+23g_\theta(k)$.
This contradicts our assumption.

Thus~$J$ contains~$k$ pairwise vertex-disjoint
$\theta$-minor models, and the first part of the proof yields~$k$
pairwise vertex-disjoint even-$N$ cycles.
\end{proof}

We will use the following definition. 
Let $F=P_1\cup\cdots\cup P_m$ be a disjoint union of paths in a graph~$G$, and
let $\mathcal Q$ be a family of vertex-disjoint~$F$-paths such that their
endpoints are pairwise distinct.
For each $Q\in\mathcal Q$, let
$x_Q$ and $y_Q$ be its endpoints, and let
$N(\mathcal Q):=\{x_Qy_Q:Q\in\mathcal Q\}$.
We define
\[
\aux(F,\mathcal Q)=F+N(\mathcal Q),
\]
and refer to $x_Qy_Q$ as the link corresponding to $Q\in \mathcal{Q}$.
We regard $\aux(H,\mathcal{Q})$ as a path-matching graph, where $N(\mathcal{Q})$ is the set of links.

\section{Reduction, ear decompositions, and the final cover}\label{sec:reduction}

We reduce Theorem~\ref{thm:main} to the following theorem.
Similar reduction arguments have been used in various Erd\H{o}s–P\'osa-type results.

\begin{theorem}\label{thm:main2}
Let~$G$ be a graph, $k$ be a positive integer, and~$C$ be a shortest even cycle of~$G$ such that $G-N_G[C]$ does not contain an even cycle.
Then~$G$ contains an induced packing of~$k$ even cycles or a set~$X$ of at most $O(k\log k)$ vertices in~$G$ such that $G-N_G[X]$ has no even cycle.
Moreover, there is an algorithm running in time polynomial in $\abs{V(G)}$ and~$k$ that outputs one of these two outcomes.
\end{theorem}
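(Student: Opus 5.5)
We follow the strategy outlined in the proof sketch. First we dispose of two easy cases. If $G$ contains a $C_4$ subgraph, then $C$ is a $4$-cycle. If $\abs{V(C)}$ is small in general, we may simply take $X=V(C)$. Otherwise, since $C$ is shortest, $G$ is $C_4$-subgraph-free and $C$ has length at least~$6$, so $C$ has at most one chord by Lemma~\ref{lem:shortestevencycle-chords}. In that setting every proposition of Section~\ref{sec:extendable} applies with $C\subseteq H$.

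\textbf{Phase 1 (bipartite ear construction).} Set $H_0=C$ and $\mathcal S_0$ to be the endpoints of the possible chord of $C$. Iteratively, given $H_i$ and $\mathcal S_i$, take a shortest $H_i$-extendable path $P$ in $G-N_{G-(V(H_i)\setminus\mathcal S_i)}[\mathcal S_i]$. Such a path is found by Derigs' parity-constrained shortest path algorithm, applied over all pairs of endpoints. Very short paths (length at most~$2$ or~$4$) are handled by simply adding their endpoints to $\mathcal S$. Otherwise set $H_{i+1}=H_i\cup P$, and add to $\mathcal S$ the vertex cover $V_{\ge2}(J)\cup Z$ of size at most~$10$ from Proposition~\ref{prop:boundingchord}(2), together with the new branching endpoints of $P$.

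We maintain the invariant that $H_i-\mathcal S_i$ is chordless and that every branching vertex lies in $\mathcal S_i$. After discarding the closed neighbourhood of $\mathcal S_i$ and suppressing, we obtain a semi-skeleton whose branching excess is linear in the number $i$ of ears. Its skeleton completion is a skeleton, so by Lemma~\ref{lem:manybranching}, once $i$ exceeds $c\cdot k\log k$ we output an induced packing of $k$ even cycles. Proposition~\ref{prop:boundingchord}(1) is used where a vertex is incident with several new chords: there it directly yields an even cycle through $w$ that can be fed into this skeleton. Hence the process stops with $H'=H_m$ and $S'=\mathcal S_m$, both of size $O(k\log k)$, and with no extendable $H'$-path avoiding $N[S']$.

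\textbf{Phase 2 (parity-breaking paths).} Let $F=H'-S'$, a disjoint union of $O(k\log k)$ paths. We greedily build a family $\mathcal Q$ of vertex-disjoint parity-breaking $F$-paths in $G-N[S']$ that avoid closed neighbourhoods of previously chosen ones. At each step we add either a single path linking two different components, or a pair of paths whose links cross or nest. Consequently $\aux(F,\mathcal Q)$ satisfies the consecutive serial property. An even-$N$ cycle in $\aux$ translates into an even cycle in $G$. To make vertex-disjoint such cycles induced, we will need shortest choices and the chordlessness of $F$; this is the step I expect to be the main obstacle. The fix I would try first is to choose each path (or pair) shortest and to delete $N[\cdot]$ of its endpoints before choosing the next. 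If $\abs{\mathcal Q}>11m+23g_\theta(k')$ with $k'=O(k)$, then Proposition~\ref{prop:many-links} gives the packing. Otherwise $\abs{\mathcal Q}=O(k\log k)$, and we add all endpoints of $\mathcal Q$ to $S'$.

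\textbf{Phase 3 (final greedy cover).} In $G-N[S'']$, maximality of both phases forces every remaining even cycle to meet the reduced $F$ in at most one vertex: two intersections would yield either an extendable path or a parity-breaking path/pair. Since $G-N[C]$ has no even cycle, every such cycle either touches $V(C)$ or has a neighbour in $F$. We then repeatedly pick a shortest remaining even cycle $D$ and add a bounded set of $O(1)$ vertices (its attachment to $F$ or $C$) to $X$, removing $N[D]$. After $k$ rounds the picked cycles form an induced packing; otherwise the process ends with $X=S''\cup$(the chosen vertices), of size $O(k\log k)$. Every step uses polynomial-time subroutines: Yuster–Zwick, Derigs, Theorem~\ref{thm:thetaminor}, and the recursive procedure behind Lemma~\ref{lem:manybranching}.
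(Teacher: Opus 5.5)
Your overall architecture matches the paper's (ear decomposition, then skeletons, then path-matching graphs, then a final greedy), but Phase~1 breaks exactly where the skeleton is produced. You keep every branching vertex in $\mathcal S_i$ and then discard $N[\mathcal S_i]$. What remains has maximum degree at most~$2$, so its branching excess is~$0$, not linear in~$i$. Deleting only the chord covers $\bigcup_i T_i$ is still too much: each $T_i$ has up to $10$ vertices and can cost up to $24$ branching vertices, while an ear creates only~$2$.

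The missing idea is the split into \emph{non-matching indices} $I=\{i : V_{\ge2}(J_i)\neq\emptyset\}$. A semi-skeleton tolerates chords forming a matching, and parity-preserving chords are absorbed by the skeleton completion. So the paper deletes $T_i$ only for $i\in I$, which leaves at least $2(m-1)-24\abs{I}$ branching vertices (Lemma~\ref{lem:transformtoskeleton}). If $\abs{I}\ge 11k$, it packs directly (Lemma~\ref{lem:nonmatching}). The even cycles $D_i$ from Proposition~\ref{prop:boundingchord}(1) each meet $H_{i-1}$ in at most one vertex and have at most $5$ neighbours there, and no vertex lies in three of them. Hence their conflict graph is $10$-degenerate. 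Saying these cycles are ``fed into the skeleton'' does not replace this argument.

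Your treatment of short ears, adding their endpoints to $\mathcal S$, also has no bound on how often it occurs, so $\abs{\mathcal S}$ is not controlled. The paper instead puts all vertices within distance~$2$ of an ear's endpoints into $S_{i+1}$. It then proves (Lemma~\ref{lem:length2Hpath}, using $C_4$-freeness, the covers $T_j$, and minimality of earlier ears) that every ear has length at least~$3$. That is exactly the hypothesis Proposition~\ref{prop:boundingchord} needs.

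In Phase~2 you rightly flag inducedness as the obstacle, but your proposed fix does not resolve it. Shortest choices plus deleting neighbourhoods of endpoints do not separate the interiors of different paths. Deleting neighbourhoods of whole paths cannot be charged to a small~$X$. The key ingredient is Lemma~\ref{lem:paritybreaking}: by maximality of the ear decomposition, no $(H^*\cup Q)$-path in $G^*$ runs from $V(H^*)\setminus V(Q)$ to an interior vertex of an $H^*$-path~$Q$, since one of its two completions would be parity-preserving. This makes the selected paths automatically disjoint, pairwise anticomplete, and anticomplete to $H^*$ away from their ends (Lemma~\ref{lem:inducedHstar}).

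Pairs must also be \emph{strongly} crossing or nested. The two paths of one pair are added in the same step and may land in different even-$N$ cycles, which would then be joined by an $H^*$-edge between their endpoints. The leftover weakly crossing or nested pairs are packed separately (Lemma~\ref{lem:removingcrossingnested}), using that the symmetric difference of the two odd cycles is an even cycle.

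Finally, Phase~3 is inconsistent as stated. You remove $N[D]$ for each picked cycle but put only $O(1)$ vertices into $X$, so cycles meeting $N[D]$ but avoiding $N[X]$ can survive. The paper removes $N[A_i]$, where $A_i$ is the attachment set of the $H^\dagger$-bridge containing $D_i$. This works for three reasons:
\begin{itemize}
\item every remaining even cycle meets $H^\dagger$ in at most one vertex, so it lies in a single bridge (Lemma~\ref{lem:one-intersection});
\item the closed neighbourhood of a bridge's attachments meets all even cycles inside that bridge (Lemma~\ref{lem:singlebridge});
\item every bridge has at most two attachments (Lemma~\ref{lem:bridge-attachments}, again via Lemma~\ref{lem:paritybreaking}).
\end{itemize}
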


We present the proof of Theorem~\ref{thm:main} assuming that Theorem~\ref{thm:main2} holds. 
We use the following algorithm to find a shortest even cycle in a graph. 

\begin{theorem}[Yuster and Zwick~\cite{YusterZ1997}]\label{thm:alg-evencycle}
    A shortest even cycle in a graph~$G$ can be found in time $O(|V(G)|^2)$.
\end{theorem}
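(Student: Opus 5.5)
This statement is quoted from Yuster and Zwick~\cite{YusterZ1997}, and the paper uses it as a black box. If I had to reprove it, I would use a truncated breadth-first search from every vertex. The running time would come from a sparsity bound, and correctness from a careful analysis of where the search stops.

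\textbf{Sparsity bound.} A graph with no even cycle has every block equal to an edge or an odd cycle. Such a graph on $n$ vertices therefore has at most $\frac{3}{2}(n-1)$ edges. Hence, as long as the part of $G$ explored by a search contains no even cycle, only $O(n)$ edges have been scanned. This is what keeps a single search at $O(n)$ time and the whole algorithm at $O(\abs{V(G)}^2)$.

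\textbf{The search from a fixed source $s$.} Run a breadth-first search from $s$, layer by layer, scanning every edge between consecutive layers and every edge inside a layer. Maintain the explored subgraph: the BFS tree together with the scanned non-tree edges. Stop at the first moment this subgraph contains an even cycle, and record a shortest even cycle inside it.
\begin{itemize}[label={}, leftmargin=1.5em]
\item \emph{Two parents.} The typical stopping event is a vertex $v$ in layer $d+1$ acquiring a second neighbour in layer $d$. Taking the last common ancestor $u$ of the two tree paths to $v$ gives an even cycle of length $2(d+1-\dist_G(s,u))$.
\item \emph{Intra-layer edges.} An edge inside a layer only closes an odd cycle with the tree. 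Several such odd cycles sharing an edge also force an even cycle, so the stopping rule must be checked incrementally.
\end{itemize}
To keep the test cheap, I would maintain the block structure of the explored subgraph incrementally. An edge closes an even cycle exactly when it creates a block that is neither a bridge nor a single odd cycle, and this can be checked in amortised constant time per scanned edge.

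\textbf{Correctness (the main obstacle).} Let $C$ be a shortest even cycle, of length $2k$. I need a source $s$ for which the search returns an even cycle of length at most $2k$; the algorithm then outputs the best cycle over all sources. Every vertex of $C$ is within distance $k$ of any $s\in V(C)$, so the ball of radius $k$ around $s$ contains $C$. Hence the search from $s$ certainly stops by layer $k$. The difficulty is that it may stop earlier, on an even cycle formed from the tree together with odd cycles, and such a cycle might be longer than $2k$.

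I would first try to choose $s\in V(C)$ extremally, for instance minimising the depth at which $C$ first becomes visible. I would then show, in the spirit of Lemma~\ref{lem:onechord} and Lemma~\ref{lem:shortestevencycle-chords}, that any even cycle in the explored graph at stopping time has length at most twice the current depth. Concretely, a stopping configuration consists of two tree paths plus at most two intra-layer edges. By the parity arguments used in Lemma~\ref{lem:shortestevencycle-chords}, this yields an even cycle of length at most $2\cdot\text{depth}+2$. A final comparison with the minimality of $C$, and with the fact that $C$ itself appears by depth $k$, should give length exactly $2k$ for a suitable $s$. I expect this comparison, ruling out overshoot by the additive constant, to be the delicate part; it is where Yuster and Zwick's specific lemma on the stopping layer is needed.
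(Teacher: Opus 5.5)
The step you defer is not merely unproved: for your stopping rule it is false, so no extremal choice of $s$ can rescue it. If you stop at the first even cycle of the explored subgraph, an even cycle formed by two odd fundamental cycles sharing an edge can end the search early, and such a cycle can be long.

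Here is a counterexample. Take a $12$-cycle $C$, and at each vertex $c$ of $C$ hang a theta: a new vertex $p$ adjacent to $c$, plus two further $(c,p)$-paths of length $8$ on new vertices. The blocks are $C$ and the thetas, so the only even cycles are $C$ and the $16$-cycles formed by the two long paths of each theta.
\begin{itemize}
\item From a vertex $c\in V(C)$, the $16$-cycle of its own theta is completed while layer $4$ is processed. Its two same-layer edges at layer $4$ close $9$-cycles that share the edge $cp$. By contrast, $C$ is completed only when the edges at its antipode are scanned, which happens while processing layer $5$.
\item From a theta vertex at distance $\delta\ge 1$ from its attachment vertex, that theta's $16$-cycle is complete once layer $\delta+4$ has been processed. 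The cycle $C$ is completed only during layer $\delta+5$.
\end{itemize}
So every source records $16$, and the algorithm outputs $16$ instead of $12$. The same example refutes your bound of $2\cdot\text{depth}+2$, since it produces a $16$-cycle at depth $4$.

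The missing idea is to stop only at local events that certify a short even cycle. There are two such events, each detected while processing layer $\ell$: a vertex of layer $\ell+1$ gains a second neighbour in layer $\ell$, or a vertex of layer $\ell$ gains a second neighbour in layer $\ell$. Either event gives an even cycle of length at most $2\ell+2$, formed by the two tree paths up to their last common ancestor. As long as neither event has occurred, every explored vertex has one parent and at most one same-layer neighbour, so at most $\tfrac32 n$ edges have been scanned. This removes the need for block maintenance, whose amortised-constant-time cost you assert without justification. Long even cycles such as the $16$-cycles above are then simply ignored.

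What remains, and this is the real content of the cited result, is to show that some source triggers such an event by layer $k-1$; the cycle found then has length exactly $2k$. This requires a genuinely careful choice of source. In a single theta as above, the $16$-cycle through $c$ is a shortest even cycle, yet the search from $c$ never meets an event. Your sketch does not supply this lemma, and the paper itself only cites Yuster and Zwick, so as it stands the proposal is not a proof.
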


When $G_{j+1}$ has a set $X_{j+1}$ whose closed neighborhood meets all even cycles, we apply the induction hypothesis to $G_j-N_{G_{j+1}}[X_{j+1}]$.

\begin{proof}[Proof of Theorem~\ref{thm:main}]
    Let~$c$ be a constant such that the set~$X$ in
Theorem~\ref{thm:main2} can always be chosen with
$\abs{X}\le ck\log k$ for every $k\ge2$.
Let $\mu_k=ck\log k$.
    
    By repeatedly applying Theorem~\ref{thm:alg-evencycle}, we can construct, in polynomial time, a sequence of graphs $G_1,\ldots, G_{\ell+1}$ and a sequence of cycles $C_1,\ldots , C_{\ell}$ in polynomial time  
such that 
\begin{itemize}
\item $G_1=G$, 
\item for each $i\in [\ell]$, $C_i$ is a shortest even cycle of $G_i$,  
\item for each $i\in [\ell]$, $G_{i+1}=G_i-N_{G_i}[C_i]$, and
\item $G_{\ell+1}$ has no even cycles.
\end{itemize}
If $\ell \geq k$, then $\{C_1, \ldots, C_k\}$ is an induced packing of~$k$ even cycles in~$G$.
Thus, we may assume that $\ell \leq k-1$.
In the following claim, we show that either we can find an induced packing of~$k$ even cycles or we can recursively construct a set in $G_j$ whose closed neighborhood meets all even cycles. 

\begin{claim}
For each $j \in [\ell+1]$, one can find in polynomial time either an induced packing of~$k$ even cycles of~$G$, or 
a set $X_{j}$ of at most $(\ell+1-j)\mu_k$ vertices in $G_{j}$ so that $G_j-N_{G_j}[X_j]$ has no even cycles.
\end{claim}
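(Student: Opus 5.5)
We argue by reverse induction on $j$, from $j=\ell+1$ down to $j=1$. For $j=\ell+1$ the graph $G_{\ell+1}$ has no even cycles, so $X_{\ell+1}=\emptyset$ works; its size is $0=(\ell+1-(\ell+1))\mu_k$.

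For the inductive step, let $j\le \ell$ and suppose we already have $X_{j+1}\subseteq V(G_{j+1})$ with $\abs{X_{j+1}}\le(\ell-j)\mu_k$ such that $G_{j+1}-N_{G_{j+1}}[X_{j+1}]$ has no even cycle. Consider the graph
\[
G_j' = G_j - N_{G_{j+1}}[X_{j+1}].
\]
Since $G_{j+1}=G_j-N_{G_j}[C_j]$, we have $G_j'-N_{G_j'}[C_j]\subseteq G_{j+1}-N_{G_{j+1}}[X_{j+1}]$, which contains no even cycle. The cycle $C_j$ survives in $G_j'$, because $N_{G_{j+1}}[X_{j+1}]\subseteq V(G_{j+1})$ is disjoint from $V(C_j)$. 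It remains a shortest even cycle of $G_j'$, because $G_j'$ is a subgraph of $G_j$.

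We therefore apply Theorem~\ref{thm:main2} to $G_j'$, $k$ and $C_j$; computing $G_j'$ and running this algorithm take polynomial time. If it returns an induced packing of $k$ even cycles in $G_j'$, this is an induced packing in $G$, since $G_j'$ is an induced subgraph of $G$. Otherwise it returns $Y$ with $\abs{Y}\le\mu_k$ such that $G_j'-N_{G_j'}[Y]$ has no even cycle. In that case set $X_j=X_{j+1}\cup Y$, so that $\abs{X_j}\le(\ell+1-j)\mu_k$.

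The main check is that $G_j-N_{G_j}[X_j]$ has no even cycle. Let $D$ be an even cycle of $G_j$, and split into two cases.
\begin{itemize}
\item If $D$ avoids $N_{G_{j+1}}[X_{j+1}]$, then $D$ lies in $G_j'$, so $D$ meets $N_{G_j'}[Y]\subseteq N_{G_j}[X_j]$.
\item If $D$ meets $N_{G_{j+1}}[X_{j+1}]$, it does so at a vertex of $N_{G_j}[X_{j+1}]$, since $N_{G_{j+1}}[X_{j+1}]\subseteq N_{G_j}[X_{j+1}]$.
\end{itemize}
In both cases $D$ meets $N_{G_j}[X_j]$.

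Finally, apply the claim with $j=1$. It gives either an induced packing of $k$ even cycles, or a set $X_1$ with $\abs{X_1}\le \ell\mu_k\le (k-1)ck\log k=O(k^2\log k)$ such that $G-N_G[X_1]$ has no even cycle. The whole procedure makes at most $k$ calls to polynomial-time subroutines, so it runs in polynomial time. For $k=1$ the statement is trivial, as either $G$ has an even cycle or $X=\emptyset$ works. The real work lies entirely in Theorem~\ref{thm:main2}; within this reduction the only delicate point is verifying that its hypothesis holds for $G_j'$, which was done above.
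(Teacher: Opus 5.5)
Your proposal is correct and follows the paper's proof essentially step for step. Both argue by reverse induction on $j$, apply Theorem~\ref{thm:main2} to $G_j-N_{G_{j+1}}[X_{j+1}]$ with the cycle $C_j$, and set $X_j=X_{j+1}\cup Y$; your explicit two-case check that $N_{G_j}[X_j]$ meets every even cycle of $G_j$ simply spells out the step the paper leaves as an observation.
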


\begin{clproof}
We proceed by reverse induction on~$j$.
When $j=\ell+1$ the statement holds with $X_{\ell+1}=\emptyset$ since $G_{\ell+1}$ has no even cycle.

Fix $j\leq \ell$.
We apply induction on $G_{j+1}$.
If $G_{j+1}$ has an induced packing of~$k$ even cycles, then we are done.
Thus, we may assume that there is a set $X_{j+1}$ of vertices of $G_{j+1}$ of size at most $(\ell+1-(j+1))\mu_k=(\ell-j)\mu_k$   such that $G_{j+1}-N_{G_{j+1}}[X_{j+1}]$ has no even cycles.
Since $G_{j+1}=G_j-N_{G_j}[C_j]$, $C_{j}$ is a shortest even cycle of $G_{j}-N_{G_{j+1}}[X_{j+1}]$, and $\left( G_{j}-N_{G_{j+1}}[X_{j+1}] \right)-N_{G_j}[C_j]$ has no even cycles.

By applying Theorem~\ref{thm:main2} to $(G_{j}-N_{G_{j+1}}[X_{j+1}], k, C_{j})$, 
we can find in polynomial time either an induced packing of~$k$ even cycles of $G_{j}-N_{G_{j+1}}[X_{j+1}]$, or a vertex set~$X$ of size at most $\mu_k$ whose closed neighborhood hits all even cycles of of $G_{j}-N_{G_{j+1}}[X_{j+1}]$. 

In the former case, we output an induced packing of~$k$ even cycles, and we are done.
In the latter case, we set $X_{j}=X_{j+1}\cup X$.
Observe that $G_{j}- N_{G_j}[X_j]$ has no even cycles and $\abs{X_{j}}\le (\ell+1-j)\mu_k$ as claimed. 
\end{clproof}
In particular, either we obtain an induced packing of~$k$ even cycles, or $N_{G}[X_1]$ hits all even cycles of $G_1=G$ and $\abs{X_1}\le \ell\mu_k\le (k-1)ck\log k=O(k^2\log k)$, as desired. 
\end{proof}

In the rest of this section, we focus on proving Theorem~\ref{thm:main2}. 
We assume that~$G$ is a graph, $k$ is a positive integer, and~$C$ is a shortest even cycle of~$G$ such that $G-N_G[C]$ does not contain an even cycle.
If~$C$ has length~$4$, then we can take $X=V(C)$.
Thus, we may assume that $\abs{V(C)}\ge 6$, and~$G$ is $C_4$-subgraph-free.

\subsection{Bipartite ear decompositions}\label{subsec:bipartiteear}
 Based on Proposition~\ref{prop:boundingchord}, we will take a certain ear decomposition of~$G$ starting from the given shortest even cycle~$C$.
Note that~$C$ has at most one chord by Lemma~\ref{lem:shortestevencycle-chords}, as~$C$ has length at least~$6$.

A \emph{bipartite ear decomposition} in~$G$ is a sequence $\{(P_i, M_i, J_i, T_i, S_i) \}_{i\in [m]}$ for some integer~$m$ defined as follows.
\begin{itemize}
    \item Let $P_1=C$ and $S_1=T_1$ be the set of endpoints of chords of~$C$.
Let $M_1$ be the set of chords of~$C$, $J_1$ be the graph $(V(P_1),M_1)$.
    \item Let~$i$ be a positive integer.
Let $H_i=\bigcup_{j\in [i]}P_j$ and $\mathcal{S}_i=\bigcup_{j\in [i]}S_j$.
    If $G-N_{G-(V(H_i)\setminus \mathcal{S}_i)}[\mathcal{S}_i]$ has no $H_i$-extension, then stop the procedure.
Otherwise, we define the following.
    \begin{itemize}
        \item Let $P_{i+1}$ be a shortest path among all $H_i$-extensions contained in $G-N_{G-(V(H_i)\setminus \mathcal{S}_i)}[\mathcal{S}_i]$.
        \item Let $M_{i+1}$ be the set of chords of $H_i\cup P_{i+1}$ in~$G$ having an endpoint in $\internal(P_{i+1})$.
        \item Let $J_{i+1}=(V(H_i\cup P_{i+1}), M_{i+1})$.
        \item Let $T_{i+1}$ be a vertex cover of $J_{i+1}$ given by Proposition~\ref{prop:boundingchord} applied to $(G,C,H_i,\mathcal{S}_i,P_{i+1})$.
        \item Let $S_{i+1}$ be the union of $T_{i+1}$ and the set of vertices of $H_i\cup P_{i+1}$ that are at distance at most~$2$ from an endpoint of $P_{i+1}$.
    \end{itemize}
\end{itemize}
We call~$i$ a \emph{non-matching index} if $V_{\ge 2}(J_i)\neq\emptyset$.
Note that if $V_{\ge 2}(J_i)=\emptyset$, then $J_i$ is a matching.
If~$i$ is a non-matching index and $v\in V_{\ge 2}(J_i)$, then by (1) of Proposition~\ref{prop:boundingchord}, there are two chords~$vz_1$ and~$vz_2$ incident with~$v$ such that $z_1, z_2\in V(P_i)$ and $z_1P_iz_2\cup z_2vz_1$ is an even cycle.

We say that a bipartite ear decomposition $\{(P_i, M_i, J_i, T_i, S_i) \}_{i\in [m]}$ is \emph{maximal} if there is no bipartite ear decomposition $\{(P_i', M_i', J_i', T_i', S_i') \}_{i\ge [m']}$ such that $m< m'$ and for all $i\in [m]$, $P_i=P_i'$.
In the remainder of this section,  we fix a maximal bipartite ear decomposition $\{(P_i, M_i, J_i, T_i, S_i) \}_{i\in [m]}$.

We include in $S_i$ all vertices of $H_i$ within distance~$2$ of the endpoints of $P_i$ to ensure that, in the subsequent step, every path in $\{P_i\}_{i\in [m]\setminus \{1\}}$ has length at least~$3$. 
We prove this in the next lemma.

\begin{lemma}\label{lem:length2Hpath}
     Each path in $\{P_i\}_{i\in [m]\setminus \{1\}}$ has length at least~$3$.
\end{lemma}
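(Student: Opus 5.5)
The plan is to argue by contradiction: fix $i\ge 1$ with $i+1\le m$, and suppose that $P_{i+1}$ has length at most~$2$. Let $x$ and $y$ be its endpoints; both lie in $V(H_i)$. Recall that $P_{i+1}$ lies in $G-N_{G-(V(H_i)\setminus\mathcal{S}_i)}[\mathcal{S}_i]$. In particular, $x,y\notin\mathcal{S}_i$, and neither $x$ nor $y$ is adjacent in $G-(V(H_i)\setminus\mathcal{S}_i)$ to a vertex of $\mathcal{S}_i$. The same holds for the internal vertex of $P_{i+1}$ if there is one.

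\textbf{Length~$1$.} Then $P_{i+1}=xy$ is a chord of $H_i$, and it is parity-preserving since $H_i\cup P_{i+1}$ is bipartite. The chords of $H_i$ are exactly the edges that appeared as chords at some earlier stage. If $xy$ was a chord of $H_j\cup P_{j+1}$ with an endpoint in $\internal(P_{j+1})$ for some $j<i$, then $xy\in M_{j+1}$. Since $T_{j+1}$ is a vertex cover of $J_{j+1}$, one of $x,y$ lies in $T_{j+1}\subseteq\mathcal{S}_i$, a contradiction. Otherwise $xy$ is a chord of $C=P_1$, so both endpoints lie in $S_1\subseteq\mathcal{S}_i$, again a contradiction.

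\textbf{Length~$2$.} Write $P_{i+1}=xzy$ with $z\notin V(H_i)$. The idea is that $x$ and $y$ must lie on the last ear $P_i$ near its ends, or else be tied to earlier ears by edges that the set $\mathcal{S}_i$ has already absorbed. Suppose first that $z$ was available at stage $j+1$ for some earlier $j<i$, meaning $z$ and the endpoints $x,y$ all survive the deletion there. If both $x,y\in V(H_j)$, then $xzy$ is an $H_j$-path of length~$2$ and is $H_j$-extendable, because the parity condition of Lemma~\ref{lem:indicatornot} involves only $\beta(x)+\beta(y)$, which is unchanged when $H_j$ is enlarged. Each ear is chosen as a shortest extension, so $P_{j+1}$ then has length at most~$2$. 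I would use this to set up an induction on $i$ with strengthened hypothesis: every ear so far has length at least~$3$, and no such length-$2$ extension was ever available. This reduces the problem to the case where at least one of $x,y$ lies in $\internal(P_j)$ for the largest relevant $j$.

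\textbf{Handling a newly added ear.} In that case, say $x\in\internal(P_i)$. Consider the $H_{i-1}$-path obtained by following $P_i$ from one of its endpoints to $x$ and then continuing along $xzy$. This has to be compared with $P_i$, using the shortest-ear property of $P_i$ together with a parity argument as in Lemmas~\ref{lem:shortestHpath-chordandedge} and~\ref{lem:chordandedge}. The distance-$2$ ball around the endpoints of $P_i$ placed in $S_i$ is what rules out the degenerate positions of $x$ close to the ends of $P_i$.

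\textbf{Main obstacle.} I expect this last step to be the hardest: making the parity bookkeeping actually force a shorter extension, or force $x$ or $y$ into $\mathcal{S}_i$. The first thing I would try is to observe that $z$ is adjacent to $x\in V(H_i)$. If $x$ or $y$ were within distance~$1$ of $\mathcal{S}_i$ via vertices outside $V(H_i)\setminus\mathcal{S}_i$, the path would be excluded. So the argument must show that one of $x,y$ lies within distance~$2$ of an ear endpoint, and hence in some $S_j$, placing it in $\mathcal{S}_i$ and contradicting $x,y\notin\mathcal{S}_i$.
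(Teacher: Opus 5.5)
\textbf{There is a gap in the length-$2$ case.} Your length-$1$ case is correct and matches the paper's argument. Your reduction for length $2$ is also sound as far as it goes. If $x,y\in V(H_j)$ for some $1\le j<i$, then $xzy$ is an admissible $H_j$-extension at stage $j$: the vertex $z\notin V(H_i)$ has no neighbor in $\mathcal{S}_i\supseteq\mathcal{S}_j$, and $x,y\in V(H_j)\setminus\mathcal{S}_j$. So $P_{j+1}$ would have length at most $2$. However, this only moves the difficulty to the stage at which the later endpoint entered, and you never supply the arguments needed there. Three pieces are missing. (i) The induction has no base. For $i=1$, i.e.\ $x,y\in V(C)$, there is no earlier ear to compare with, and you must use that $C$ is a shortest even cycle and $G$ is $C_4$-subgraph-free. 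Bipartiteness of $C\cup xzy$ makes both $(x,y)$-paths of $C$ even, and neither has length $2$ (else there is a $C_4$). So $xzy$ together with one of them is an even cycle shorter than $C$. (ii) The subcase $x,y\in\internal(P_i)$ is not addressed. Here $xP_iy$ is even, has length at least $4$ by $C_4$-freeness, and replacing it by $xzy$ gives a shorter admissible $H_{i-1}$-extension than $P_i$.

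(iii) In the mixed case $x\in\internal(P_i)$, $y\in V(H_{i-1})$, you stop at the decisive step, and the obstacle you anticipate does not arise. No parity bookkeeping in the style of Lemmas~\ref{lem:shortestHpath-chordandedge} and~\ref{lem:chordandedge} is needed, because everything happens inside the bipartite graph $H_i\cup P_{i+1}$. Let $x'$ be the endpoint of $P_i$ different from $y$, and $y'$ the other endpoint. Then $H_{i-1}\cup x'P_ix\cup xzy$ is a subgraph of $H_i\cup P_{i+1}$, so $x'P_ix\cup xzy$ is $H_{i-1}$-extendable. It lies in $G-N_{G-(V(H_{i-1})\setminus\mathcal{S}_{i-1})}[\mathcal{S}_{i-1}]$, since $z$ has no neighbor in $\mathcal{S}_i\supseteq\mathcal{S}_{i-1}$ and $y\in V(H_{i-1})\setminus\mathcal{S}_{i-1}$. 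Since $x\notin S_i$ gives $\dist_{P_i}(x,y')\ge 3$, its length $\abs{E(x'P_ix)}+2$ is less than $\abs{E(P_i)}$, contradicting the choice of $P_i$. So the role of $S_i$ is only to supply this length inequality. The contradiction comes from the minimality of $P_i$ (or of $C$), not from forcing $x$ or $y$ into some $S_j$ as your last paragraph suggests. These three arguments are exactly the three cases of the paper's proof. The paper organizes them by the index at which each endpoint first appears and compares directly with the ear that introduced the later endpoint, which makes your induction unnecessary.
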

\begin{proof}
    Suppose that $P_i$ has length at most~$2$ for some $i\in [m]\setminus \{1\}$.
Let~$v$ and~$w$ be the endpoints of $P_i$. 
    For $z\in \{v,w\}$, let $\tau(z)=\min\{r\in [i-1]:z\in V(H_r)\}$.
In other words, for $z\notin V(C)$, $\tau(z)$ is the minimum integer~$j$ such that~$z$ is an internal vertex of $P_j$. 

    Suppose first that $P_i$ has length~$1$.
Then $vw$ is a chord of $H_{i-1}$.
Let $j=\max\{\tau(v),\tau(w)\}$.
    If $j=1$, then $vw$ is a chord of~$C$, and hence
    $v,w\in S_1\subseteq\mathcal S_{i-1}$.
    If $j\ge2$, then 
    one of~$v$ and~$w$ is an internal vertex of $P_j$, and $vw\in M_j$.
    Since $T_j$ is a vertex cover of $J_j$, one of~$v$ and~$w$ belongs to 
    $T_j\subseteq S_j\subseteq\mathcal S_{i-1}$.
    In either case, this contradicts the choice of $P_i$.

     Suppose now that $P_i$ has length~$2$ for some $i\in [m]\setminus\{1\}$. 
    We divide cases depending on whether $\tau(v)=\tau(w)$.

    \medskip \noindent
    \textbf{Case 1. $\tau(v)=\tau(w)=j$ for some $j\in [m]$.}

First assume that $j=1$.
Then $v,w\in V(C)$.
Since $H_m$ is bipartite and
$P_i$ has length~$2$, the two $(v,w)$-paths in~$C$ have even
length.
Neither of them has length~$2$, since otherwise $C\cup P_i$ contains a subgraph isomorphic to $C_4$.
Thus, both have
length at least~$4$.
Consequently, the union of $P_i$ and
a $(v,w)$-path in~$C$ is an even cycle shorter than~$C$,
a contradiction.

   Suppose then that $j\ge 2$.
In this case,
$v,w\in V(\internal(P_j))$.
Since $H_m$ is bipartite,
$vP_jw$ has even length.
If $\abs{E(vP_jw)} = 2$, then $vP_jw \cup P_i$ is isomorphic to $C_4$, contradicting the assumption that~$G$ is $C_4$-subgraph-free.
Thus, $\abs{E(vP_jw)} > 2$, and in this case, the path obtained from $P_j$ by replacing $vP_jw$ with $P_i$ is a shorter $H_{j-1}$-extension than $P_j$.
That is also a contradiction. 
    
    \medskip \noindent
    \textbf{Case 2. $\tau(v)\neq \tau(w)$.}

Without loss
of generality, let $a=\tau(v)<\tau(w)=b.$
Then
$v\in V(H_{b-1})$ and $w\in V(\internal(P_b))$.
Let~$x$ and~$y$ be the endpoints of $P_b$, where we choose~$x$ so
that $x\ne v$.

Since~$w$ is an endpoint of $P_i$, we have
$w\notin\mathcal S_{i-1}$, and $w\notin S_b$.
By the definition of $S_b$,
$\dist_{H_b}(w,\{x,y\})\ge 3.$
    
    Since $\dist_{P_b}(w, \{x,y\})\ge 3$, $xP_bw\cup P_i$ is a shorter $H_{j-1}$-path than $P_j$.
    This new path preserves the bipartite condition, as $H_m$ is bipartite.
This contradicts the choice  of $P_j$.

    We conclude that $P_i$ has length at least~$3$.
\end{proof}

\begin{lemma}\label{lem:atmostonechord}
    For each $i\in [m]$, every vertex of $H_i-\mathcal{S}_i$ is incident with at most one chord of $H_i$.
\end{lemma}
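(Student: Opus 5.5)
We argue by induction on~$i$. For $i=1$, $H_1=C$ has at most one chord by Lemma~\ref{lem:shortestevencycle-chords}, since $\abs{V(C)}\ge 6$, so every vertex of $C$ meets at most one chord. (In fact both endpoints of that chord lie in $S_1=\mathcal S_1$, so $H_1-\mathcal S_1$ meets no chord at all.) Now let $i\ge 2$ and $v\in V(H_i)\setminus\mathcal S_i$. The key observation is that every chord of $H_i$ is either a chord of $H_{i-1}$ or an element of $M_i$. Indeed, a chord of $H_i$ with both endpoints in $V(H_{i-1})$ is not an edge of $P_i$, since $P_i$ is an $H_{i-1}$-path of length at least~$3$ by Lemma~\ref{lem:length2Hpath}; it is therefore a chord of $H_{i-1}$. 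Every other chord has an endpoint in $\internal(P_i)$, so it lies in $M_i$. Conversely, a chord of $H_{i-1}$ remains a chord of $H_i$ unless it lies in $E(P_i)$, which is impossible because $\internal(P_i)$ is disjoint from $V(H_{i-1})$ and $\abs{E(P_i)}\ge 3$.

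We split according to whether $v$ meets a chord in $M_i$. Suppose first that $v$ meets none. Then every chord at $v$ is a chord of $H_{i-1}$, so $v\in V(H_{i-1})$. Since $\mathcal S_{i-1}\subseteq\mathcal S_i$, we also have $v\notin\mathcal S_{i-1}$, and the induction hypothesis gives at most one such chord.

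Suppose instead that $v$ is incident with some $e\in M_i$. Because $v\notin T_i$ and $T_i=V_{\ge 2}(J_i)\cup Z$ is the vertex cover from Proposition~\ref{prop:boundingchord}(2), $v$ has degree exactly~$1$ in $J_i$. So $e$ is the only $M_i$-chord at $v$, and it remains to rule out a second chord $f$ of $H_{i-1}$ at $v$. If $v\in V(\internal(P_i))$, this is automatic, since $v\notin V(H_{i-1})$. So assume $v\in V(H_{i-1})\setminus\mathcal S_{i-1}$ and that $f$ is a chord of $H_{i-1}$ at $v$.

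To handle this case we trace back to the step that created $f$. Let $j\le i-1$ be the index at which $f$ first became a chord. If $j=1$, both endpoints of $f$ lie in $S_1$, contradicting $v\notin\mathcal S_{i-1}$. So $j\ge 2$ and $f\in M_j$. Since $T_j$ covers $J_j$ and $v\notin T_j$, the other endpoint $u$ of $f$ lies in $T_j\subseteq\mathcal S_{i-1}$, and $v$ has degree~$1$ in $J_j$. Now consider $P_i$. It lies in $G-N_{G-(V(H_{i-1})\setminus\mathcal S_{i-1})}[\mathcal S_{i-1}]$, so none of its vertices, apart from ones in $V(H_{i-1})\setminus\mathcal{S}_{i-1}$, is adjacent to $u\in\mathcal S_{i-1}$. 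The internal vertices of $P_i$ are outside $H_{i-1}$, and this restriction alone does not forbid an $M_i$-chord at $v$. So a second argument is needed.

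This is the step I expect to be the main obstacle. The first approach I would try is a parity and shortest-path argument in the spirit of Lemmas~\ref{lem:shorthestHpath-edges}--\ref{lem:shorthestHpath-edges-3}. The chord $e=vb$ with $b\in\internal(P_i)$, together with $f=vu$, would yield an $H_{i-1}$-path $u\,v\,b$ followed by a piece of $P_i$. Since $f$ was a chord of $H_{j-1}\cup P_j$, it was either parity-breaking or handled by Proposition~\ref{prop:boundingchord}(1). Combining $e$ with a subpath of $P_i$ should give either a shorter $H_{i-1}$-extension avoiding the forbidden set, which contradicts the minimality of $P_i$, or a path through $u$'s neighbourhood, which contradicts the choice of the forbidden region. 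If this fails, the fallback is to note that $v$ is within distance~$1$ of $u\in\mathcal S_{i-1}$. One could then check whether the definition of the forbidden region excludes $b$, since $b$ is adjacent to $v$, and $v$ is adjacent to $u$ only via a chord rather than an edge of $G-(V(H_{i-1})\setminus\mathcal S_{i-1})$. I would then case-split on whether $f$ is parity-breaking for $H_{i-1}$.
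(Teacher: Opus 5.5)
Your induction setup and the easy cases are correct, but the proof never closes the one nontrivial case: $v\in V(H_{i-1})\setminus\mathcal S_{i-1}$ incident with some $e\in M_i$. Tracing $f$ back to the step $j$ at which it was created and then trying to combine it with $e$ leads nowhere definite. The fallback also fails. Since $v\in V(H_{i-1})\setminus\mathcal S_{i-1}$ is deleted when forming $G-(V(H_{i-1})\setminus\mathcal S_{i-1})$, the adjacency $bv$ has no bearing on whether $b$ lies in the forbidden region $N_{G-(V(H_{i-1})\setminus\mathcal S_{i-1})}[\mathcal S_{i-1}]$. As written, this case is a genuine gap.

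The missing idea is to look at the other endpoint of $e$. Doing so shows that this case cannot occur at all, without using $f$.
\begin{itemize}
\item Since $S_i$ contains the endpoints of $P_i$ and $v\notin\mathcal S_i$, the vertex $v$ is not an endpoint of $P_i$. Hence $e=vb$ with $v\in V(H_{i-1})\setminus V(P_i)$ and $b\in V(\internal(P_i))$.
\item No further chord $bz\in M_i$ exists. If $z\in V(P_i)$, the pair $vb$, $bz$ contradicts Lemma~\ref{lem:chordandedge}, since $b$ is not an internal vertex of $bP_iz$.
\item If instead $z\in V(H_{i-1})\setminus V(P_i)$ with $z\neq v$, the pair $vb$, $zb$ contradicts Lemma~\ref{lem:shorthestHpath-edges-2}. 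That lemma applies because $P_i$ has length at least~$3$ by Lemma~\ref{lem:length2Hpath}.
\item Thus $\deg_{J_i}(b)=1$, and you already showed $\deg_{J_i}(v)=1$. So $v$ belongs to the set $Z$ of Proposition~\ref{prop:boundingchord} for $J_i$. This gives $v\in T_i\subseteq\mathcal S_i$, a contradiction.
\end{itemize}

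With this step inserted, your induction is complete. The paper proves the lemma without induction: it takes the chord at $v$ with the maximum index $b$ and runs exactly this argument at step $b$.
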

\begin{proof}
Suppose for a contradiction that some vertex
$v\in V(H_i)\setminus\mathcal S_i$ is incident with two
distinct chords of $H_i$.
Let $vy$ be a chord of $H_i$ with $vy\in M_b$ such that~$b$ is maximum. 

If $v\in V(\internal(P_b))$, then~$v$ is incident with two chords in $H_b$, which implies that $v\in T_b$.
However,  $v\notin T_b\subseteq \mathcal{S}_i$, which is a contradiction.
Thus, $v$ is contained in $H_{b-1}$ and $y\in V(\internal(P_b))$.
Moreover, as $v\notin S_b$ and $S_b$ contains the endpoints
of $P_b$, we have
$v\in V(H_{b-1})\setminus V(P_b)$.

We claim that $\deg_{J_b}(y)=1$.
Suppose that $yz\in M_b$ for
some $z\ne v$.
If $z\in V(P_b)$, then the chords $vy$ and
$yz$ contradict Lemma~\ref{lem:chordandedge}, since~$y$ is
not an internal vertex of $yP_bz$.
If
$z\in V(H_{b-1})\setminus V(P_b)$,
then $vy$ and $zy$ contradict
Lemma~\ref{lem:shorthestHpath-edges-2}.
Therefore, $\deg_{J_b}(y)=1$.

Since $v\notin T_b$ and
$V_{\ge2}(J_b)\subseteq T_b$, while $vy\in M_b$, we also have
$\deg_{J_b}(v)=1$.
Thus
\[
v,y\in Z_b\subseteq T_b\subseteq S_b\subseteq\mathcal S_i,
\]
contrary to $v\notin\mathcal S_i$.
\end{proof}

We show that if there are $11k$ many non-matching indices, then we can find an induced packing of~$k$ even cycles. 

\begin{lemma}\label{lem:nonmatching}
    If there are $11k$ non-matching indices, then~$G$ contains an induced packing of~$k$ even cycles.
\end{lemma}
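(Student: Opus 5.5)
For every non-matching index $i$ I would fix a witness $v_i\in V_{\ge2}(J_i)$. By (1) of Proposition~\ref{prop:boundingchord}, it gives chords $v_iz_1,v_iz_2\in M_i$ with $z_1,z_2\in V(P_i)$ and $v_i\notin V(z_1P_iz_2)$. The associated cycle is $D_i=z_1P_iz_2\cup z_2v_iz_1$. It is even, because $H_i$ is bipartite and $v_i$ is adjacent to both $z_1$ and $z_2$, so $z_1P_iz_2$ has even length. The plan is to choose $k$ of these cycles greedily. After each choice we discard every index $j$ whose cycle $D_j$ meets or is adjacent to the cycle just chosen, and we aim to show that each choice discards at most $10$ other indices, so that $11k$ indices suffice.

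The first step locates the cycles. Every $D_i$ consists of a subpath of $P_i$ together with one extra vertex $v_i$. Either $v_i\in\internal(P_i)$, in which case $D_i\subseteq \internal(P_i)\cup\{\text{ends}\}$, or $v_i\in V(H_{i-1})$. The endpoints of $P_i$ lie in $\mathcal S_{i-1}$-free territory, and $P_i$ avoids $N_{G-(V(H_{i-1})\setminus\mathcal S_{i-1})}[\mathcal S_{i-1}]$. Since $v_i\in T_i\subseteq\mathcal S_i$, the vertex $v_i$ together with its neighbours outside the previous skeleton are forbidden to all later ears $P_j$ with $j>i$. The same holds for the vertices of $D_i$ close to $T_i$.

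The second step controls the interaction between two cycles $D_i$ and $D_j$ with $i<j$. Such a cycle $D_j$ can touch $D_i$ in only two ways. In the first, $v_j$ or a vertex of $P_j$ is adjacent to a vertex of $D_i$ through a chord of $H_j$. In the second, $v_j$ itself lies on $D_i$. Internal vertices of $P_j$ are not adjacent to $\mathcal S_{j-1}\ni v_i$ via non-$H$ edges. So every edge between $D_j$ and $D_i$ is a chord in $M_j$, or one involving $v_j$. By part (2) of Proposition~\ref{prop:boundingchord}, the vertex cover $T_j$ has size at most $10$, and Lemma~\ref{lem:atmostonechord} restricts the chords at the vertices outside $\mathcal S$. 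From this I want to deduce a charging scheme: for fixed $i$, only boundedly many ($\le 10$) indices $j$ have $D_j$ conflicting with $D_i$. Alternatively, one could process the indices from the largest to the smallest, so that each newly chosen cycle $D_j$ is charged only to conflicts with later-chosen cycles through $T_j$.

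The main obstacle is making this bound genuinely local. A single vertex of $D_i$ may be an endpoint of many later ears, or a vertex $v_j$ may sit in $V(D_i)$. I would handle this by using the distance-$2$ buffer in $S_j$ around the endpoints of the ears, together with the fact that endpoints of later ears avoid $\mathcal S$. The aim is to show that any conflicting $j$ must have $v_j$ or an endpoint of $P_j$ among a set of at most $10$ vertices associated with $D_i$ through $T_i$. If this works, the greedy choice from the latest index downward yields $k$ pairwise non-adjacent, disjoint even cycles from $11k$ indices.
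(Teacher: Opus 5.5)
Your greedy needs a two-sided bound: for a fixed $i$, at most ten indices $j$, earlier \emph{or later}, have $D_j$ meeting or adjacent to $D_i$. The forward half of this bound is not available, and the step you flag as the main obstacle is where it breaks. The cycle $D_i$ can be arbitrarily long, and most of its vertices are internal vertices of $P_i$ lying outside $\mathcal S_i$: $T_i$ has at most ten vertices, and the distance-$2$ buffer in $S_i$ only covers the ends of $P_i$. Endpoints of a later ear $P_j$ only have to avoid $\mathcal S_{j-1}$, so $P_j$ may end at any such vertex $y$ of $D_i$. Then $D_j$ contains $y$ (for instance, if one of the two chords at $v_j\in\internal(P_j)$ ends at $y$), or $D_j$ is adjacent to $y$ (if the segment of $P_j$ on $D_j$ stops at the neighbour of $y$). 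The buffer in $S_j$ only prevents $y$ itself from being reused. Different later ears can use different vertices along $D_i$, so no set of ten vertices ``associated with $D_i$ through $T_i$'' catches all conflicting $j>i$. Hence nothing bounds how many indices a single greedy choice discards.

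What works is a one-sided (degeneracy) bound: each $D_i$ conflicts with at most ten cycles $D_a$ with $a<i$. You then repeatedly keep the largest remaining index, which is your ``alternative'' ordering. The charging, however, is not through $T_i$ but through $N_G[D_i]\cap V(H_{i-1})$, since every earlier $D_a$ lies in $H_a\subseteq H_{i-1}$. It needs two facts you never establish.

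(a) $|V(D_i)\cap V(H_{i-1})|\le 1$, and no chord of $H_m$ joins $V(D_i)\setminus V(H_{i-1})$ to $V(H_{i-1})\setminus V(D_i)$; this follows from Lemmas~\ref{lem:shortestHpath-chordsofpath}, \ref{lem:chordandedge} and~\ref{lem:shorthestHpath-edges-3}. The common vertex, if any, is $v_i$ or an end of $P_i$, hence lies outside $\mathcal S_{i-1}$. So it has degree $2$ in $H_{i-1}$ and, by Lemma~\ref{lem:atmostonechord}, at most one chord. This gives $|N_G(D_i)\cap V(H_{i-1})|\le 5$.

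(b) A vertex of $D_a\cap D_i$ with $a<i$ is $v_i$ or an end of $P_i$, hence lies in $S_i$. On the other hand, a vertex of $D_c\cap H_{c-1}$ avoids $\mathcal S_{c-1}$. So no vertex lies on three of the cycles, and a vertex outside $\mathcal S_{i-1}$ lies on at most one earlier cycle.

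A short count using (a) and (b) then bounds the backward conflicts by ten.

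Separately, your reason for $D_i$ being even is wrong: bipartiteness of $H_i$ says nothing about the chords $v_iz_1,v_iz_2\notin E(H_i)$. Evenness comes from Lemma~\ref{lem:onechordforpath} when $v_i\in\internal(P_i)$. When $v_i\in V(H_{i-1})$, it comes from the minimality of $P_i$, which forces both chords to be parity-breaking.
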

\begin{proof}
    Let~$I$ be the set of non-matching indices, and assume that $\abs{I}\ge 11k$.
For each $i\in I$, let $w_i\in V_{\ge 2}(J_i)$.
By Proposition~\ref{prop:boundingchord}, there exist two distinct
vertices $z_i^1,z_i^2\in V(P_i)\setminus\{w_i\}$ such that
$w_i z_i^1,w_i z_i^2\in M_i$
and
$w_i\notin V(z_i^1P_i z_i^2).$
    Let
$D_i=w_i z_i^1\cup z_i^1P_i z_i^2\cup z_i^2w_i$.
    Note that $D_i$ is an even cycle.

We observe the following property.
\begin{claim}\label{claim:nochordDi}
    Let $i\in I$.
Then $\abs{V(D_i)\cap V(H_{i-1})}\le 1$.
    Also, there is no chord of $H_m$ between $V(D_i)\setminus V(H_{i-1})$ and $V(H_{i-1})\setminus V(D_i)$.
\end{claim}
\begin{clproof}
    We first deal with the case when $w_i\in V(H_{i-1})$.
    Then
$z_i^1,z_i^2\in V(\internal(P_i))$.
So, $\abs{V(D_i)\cap V(H_{i-1})}= 1$.

    Now, assume that there is a chord $ab$ with $a\in V(H_{i-1})\setminus V(D_i)$ and $b\in V(D_i)\setminus V(H_{i-1})$.
    If both~$a$ and $w_i$ are endpoints of $P_i$, then there are two chords contradicting Lemma~\ref{lem:shortestHpath-chordsofpath}.
    If exactly one of~$a$ and $w_i$ is an endpoint of $P_i$, then $ab$ and one of $w_iz_i^1$ and $w_iz_i^2$ contradict Lemma~\ref{lem:chordandedge}. 
    Otherwise, neither~$a$ nor $w_i$ is an endpoint of $P_i$.
    So, $w_i\in V(H_{i-1})\setminus V(P_i)$ and
    $a\in V(H_{i-1})\setminus
\bigl(V(P_i)\cup\{w_i\}\bigr).$
Since $w_i$ has two chords $w_i z_i^1$ and $w_i z_i^2$
to $P_i$, whereas~$a$ has the chord $ab$ to $P_i$,
this contradicts Lemma~\ref{lem:shorthestHpath-edges-3}.

Suppose now that $w_i\in V(\internal(P_i))$.
Since
$w_i\notin V(z_i^1P_i z_i^2)$,
the path $z_i^1P_i z_i^2$ is contained in one component of
$P_i-w_i$, and hence contains at most one endpoint of $P_i$.
Thus, we have $\abs{V(D_i)\cap V(H_{i-1})}\le 1$.

Assume that there is a chord $ab$ with $a\in V(H_{i-1})\setminus V(D_i)$ and $b\in V(D_i)\setminus V(H_{i-1})$.
If~$a$ is an endpoint of $P_i$, then there are two chords of $P_i$ violating (2) of Lemma~\ref{lem:shortestHpath-chordsofpath}.
On the other hand, if $a\in V(H_{i-1})\setminus V(P_i)$, then there are two chords of $P_i$ violating Lemma~\ref{lem:chordandedge}.
This is a contradiction.
\end{clproof}

\begin{claim}\label{claim:intersectionDi}
    Let $i,j\in I$ with $i<j$.
If $v\in V(D_i)\cap V(D_j)$, then $v\in S_j$.
\end{claim}
\begin{clproof}
    Assume that $v\in V(D_i)\cap V(D_j)$.
Then $v\in V(D_j)\cap V(H_{j-1})$. 
    If $w_j\in V(H_{j-1})$, then 
    $v=w_j$ and~$v$ is a vertex of degree at least~$2$ in $J_j$.
In this case, $v\in S_j$.
Otherwise, $w_j\in V(\internal(P_j))$ and~$v$ is an endpoint of $P_j$.
Thus, we have $v\in S_j$. 
\end{clproof}
\begin{claim}\label{claim:nothreeintersection}
   Every vertex of $H_m$ is contained in at most two cycles of $\{D_i:i\in I\}$. 
\end{claim}
\begin{clproof}
    Suppose that there is a vertex~$v$ contained in three cycles in $\{D_i:i\in I\}$, say $D_a, D_b, D_c$ with $a<b<c$.
    By Claim~\ref{claim:intersectionDi}, $v$ is contained in $S_b$.
As $b<c$, $v$ has no neighbors in $V(P_c)\setminus V(H_{c-1})$ and~$v$ is not an endpoint of $P_c$. 
    
    On the other hand, $v\in V(D_c)\cap V(H_{c-1})$.
By the definition of $D_c$, $v$ is either an endpoint of $P_c$ or a vertex having a chord to $V(\internal(P_c))$.
Both cases contradict the preceding observation.
\end{clproof}

    Let $\Gamma$ be an auxiliary graph on vertex~$I$ such that for distinct $a,b\in I$, $ab\in E(\Gamma)$ if and only if \begin{itemize}
        \item $V(D_a)\cap V(D_b)\neq \emptyset$ or
        \item there is an edge in~$G$ between $D_a$ and $D_b$.
    \end{itemize} 

    \begin{claim}\label{claim:degreefive}
    For every $i\in I$, $\abs{N_G(D_i)\cap V(H_{i-1})}\le 5$.
    \end{claim}
    \begin{clproof}
        First assume that $w_i\in V(H_{i-1})$.
By Claim~\ref{claim:nochordDi},
$V(D_i)\cap V(H_{i-1})=\{w_i\}$,
and there is no chord between
$V(D_i)\setminus V(H_{i-1})$ and
$V(H_{i-1})\setminus V(D_i)$.

Since $w_i\notin\mathcal S_{i-1}$, by
Lemma~\ref{lem:atmostonechord}, $w_i$ is incident with at most
one chord of $H_{i-1}$.
Moreover, $w_i$ has at most two
neighbors in $H_{i-1}$ by edges of $H_{i-1}$.
The only other possible edges from $D_i$ to $H_{i-1}$ are
the edges of $P_i$ leaving $z_i^1P_i z_i^2$ at its two ends.
There are at most two such edges.
Therefore,
\[
\bigl|N_G(D_i)\cap
V(H_{i-1})\bigr|
\le 2+1+2=5.
\]

Now assume that $w_i\notin V(H_{i-1})$.
Then
$w_i\in V(\internal(P_i))$.
By Claim~\ref{claim:nochordDi},
$\abs{V(D_i)\cap V(H_{i-1})}\le 1$,
and there is no chord between
$V(D_i)\setminus V(H_{i-1})$
and 
$V(H_{i-1})\setminus V(D_i)$.

Suppose first that
$V(D_i)\cap V(H_{i-1})=\{x\}$.
Then~$x$ is an endpoint of $P_i$.
Since $P_i$ is a path in $G-N_{G-(V(H_{i-1})\setminus \mathcal{S}_{i-1})}[\mathcal{S}_{i-1}]$, we have
$x\notin\mathcal S_{i-1}$.
Thus, by
Lemma~\ref{lem:atmostonechord}, $x$ is incident with at most
one chord of $H_{i-1}$.
Moreover, $x$ is incident with at most
two edges of $H_{i-1}$.
Thus, at most three vertices of
$V(H_{i-1})\setminus V(D_i)$ can be adjacent to~$x$.

By Claim~\ref{claim:nochordDi}, every other edge from
$D_i$ to $H_{i-1}$ must be an edge of $P_i$.
As~$x$ is an endpoint of $P_i$, there is at most one edge of $P_i$
joining a vertex of $D_i\setminus V(H_{i-1})$ to a vertex of
$H_{i-1}\setminus V(D_i)$.
Thus, we have 
\[
\bigl|N_G(D_i)\cap
V(H_{i-1})\bigr|
\le 3+1=4.
\]

If $V(D_i)\cap V(H_{i-1})=\emptyset$, then, again by
Claim~\ref{claim:nochordDi}, every edge from $D_i$ to
$H_{i-1}$ is an edge of $P_i$.
There are at most two such
edges, and hence the desired bound follows as well.
    \end{clproof}

By Claims~\ref{claim:intersectionDi}, \ref{claim:nothreeintersection}, and~\ref{claim:degreefive}, for every $i\in I$, there are at most ten indices $a\in I$ with $a<i$ such that $ai\in E(\Gamma)$.
So, $\Gamma$ is $10$-degenerate, and it has an independent set $I'$ of size at least $11k/11=k$.
Then $\{D_i:i\in I'\}$ is an induced packing of~$k$ even cycles.
\end{proof} 

    We now consider a case when~$m$ is large.
    In this case, we show that~$G$ has an induced packing of~$k$ even cycles, using Lemma~\ref{lem:manybranching}. 

\begin{lemma}\label{lem:transformtoskeleton}
    Let~$I$ be the set of all non-matching indices.
Let~$H$ be the graph obtained from $H_m$ by removing $\bigcup_{i\in I}T_i$, and then recursively deleting vertices of degree at most~$1$. 
    Then~$H$ is a semi-skeleton with at least $2(m-1)-24\abs{I}$ branching vertices.
\end{lemma}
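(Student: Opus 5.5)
We must prove two things: that $H$ is a semi-skeleton, and that it has at least $2(m-1)-24\abs{I}$ branching vertices. We handle them separately, starting with the structure of $H_m$.

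\emph{Structure and branching count of $H_m$.} By construction $H_m$ is bipartite, since every $P_{i}$ with $i\ge2$ is an $H_{i-1}$-extension. Each ear $P_i$ with $i\ge 2$ has distinct endpoints in $H_{i-1}$, and each endpoint is outside $\mathcal{S}_{i-1}$. An endpoint of $P_i$ is placed into $S_i$ together with every vertex within distance $2$ of it. So no vertex is used as an endpoint of two different ears, and degrees in $H_m$ are at most $3$. We then count: $H_m$ has exactly $2(m-1)$ vertices of degree $3$, namely the endpoints of the ears, and all other vertices have degree $2$. Hence $H_m$ is a bipartite $(2,4)$-graph (in fact a $(2,3)$-graph) with $2(m-1)$ branching vertices.

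\emph{Effect of the deletions.} Let $T=\bigcup_{i\in I}T_i$, so $\abs{T}\le 10\abs{I}$ by Proposition~\ref{prop:boundingchord}(2). In a subcubic graph, deleting one vertex and then iteratively pruning vertices of degree at most $1$ destroys only a bounded number of branching vertices. The deleted vertex itself may be branching, and each of its at most $3$ incident corridors propagates a pruning that can end by turning one branching vertex into a non-branching one. This gives a crude bound of about $4$ per deleted vertex, which would yield $40\abs{I}$ rather than $24\abs{I}$. I expect this sharper constant to be the main obstacle. To reach $24$, I would exploit the structure of $T_i$ from the proof of Proposition~\ref{prop:boundingchord}. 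The set $T_i=T_P\cup T_H$ consists mostly of internal vertices of $P_i$, which have degree $2$, plus at most one vertex $a$ of $H_{i-1}$, or at most two such vertices in the matching case. Deleting any set of internal vertices of the single corridor $P_i$ prunes only $P_i$ itself and kills just its two endpoints, a cost of $2$. A vertex of $H_{i-1}$ costs at most $4$. Combining these with the cost of deleting the endpoint-type vertices, I would aim for a per-index cost of at most $24$, after recounting carefully. Once this is done, $H$ has at least $2(m-1)-24\abs{I}$ branching vertices. It is also an induced-degree $(2,3)$ subgraph of $H_m$, so it is a bipartite $(2,4)$-graph.

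\emph{Chords.} A chord of $H$ is an edge of $G[V(H)]\setminus E(H)$. Either it was a chord of $H_m$, or it is an edge of $H_m$ lost during pruning. The second case is impossible: pruning removes vertices, so such an edge has an endpoint outside $V(H)$. Every chord of $H_m$ lies in $M_j$ for some $j$, or is the at most one chord of $C$, whose endpoints lie in $S_1$. Since the edges of the sequence $M_i$ are new at each step, it suffices to show that the chords surviving in $H$ form a matching. For $i\notin I$, $J_i$ is already a matching. For $i\in I$, $T_i$ is a vertex cover of $J_i$, so no edge of $M_i$ survives. Two chords from different $M_j$ and $M_{j'}$ cannot share a vertex $v$ unless $v\in\mathcal{S}$. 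I would show this using Lemma~\ref{lem:atmostonechord} together with the fact that vertices of $Z_j$ and of $V_{\ge2}(J_j)$ lie in $T_j$.

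This last point needs care: $T_j$ for $j\notin I$ is not deleted. So I would argue directly via the mechanism of Lemma~\ref{lem:atmostonechord}. Let $v$ be incident with chords from $M_j$ and $M_{j'}$ where $j<j'$. Then $v$ has a neighbour in $\internal(P_{j'})$, so $v$ is not in $\mathcal{S}_{j'-1}$. On the other hand, $v$ is an endpoint of a $J_j$-edge whose other endpoint has degree $1$, which puts $v\in Z_j\subseteq S_j$. This is a contradiction. It remains to check that no chord joins two branching vertices. Branching vertices are endpoints of ears, and these lie in $S_i$, so they cannot be adjacent to internal vertices of later ears. Hence a chord between two branching vertices would have to be an ear of length $1$, which Lemma~\ref{lem:length2Hpath} rules out. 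Finally, $H$ has no vertices of degree $4$, so the last condition of a semi-skeleton holds vacuously.
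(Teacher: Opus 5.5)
\textbf{Main gap: the count of branching vertices.} You leave this open yourself (``I would aim for a per-index cost of at most $24$, after recounting carefully''). The one concrete step you offer toward it is false.

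In $H_m$ the internal vertices of $P_i$ do not form a single corridor. A later ear $P_j$ may end at an internal vertex of $P_i$ outside $\mathcal S_{j-1}$, for instance between two vertices of $T_i$. Deleting several vertices of $T_i\cap V(\internal(P_i))$ can therefore cost up to $2$ each, not $2$ in total.

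In the other direction, charging $4$ to every vertex of $T_i\cap V(H_{i-1})$ is too generous. There can be four such vertices: both endpoints of $P_i$ in $T_P$, and two vertices of $V(H_{i-1})\setminus V(P_i)$ in $T_H$ in the matching case. That gives $4\cdot 4+2\cdot 6=28$, not $24$.

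The missing observation is that at most two vertices of $T_i$ are branching in $H_m$, namely the endpoints of $P_i$:
\begin{itemize}
\item every vertex of $T_i$ lies in $S_i$, so it is not an endpoint of a later ear;
\item every other vertex of $T_i$ is either internal to $P_i$, or a vertex of $H_{i-1}$ joined by a chord of $M_i$ to $\internal(P_i)$. In both cases it lies outside $\mathcal S_{i-1}$, so it is not an endpoint of an earlier ear.
\end{itemize}
Combine this with a per-vertex bound. In a subcubic graph of minimum degree $2$, deleting $v$ and passing to the $2$-core destroys at most $\deg(v)+1=4$ branching vertices if $v$ is branching, and at most $\deg(v)=2$ otherwise. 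This bound applies at each step of the sequential deletion. It yields $2\cdot 4+8\cdot 2=24$ per index, which is exactly the paper's count.

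\textbf{Smaller gap: no chord joining two branching vertices.} Your step ``hence a chord between two branching vertices would have to be an ear of length $1$, which Lemma~\ref{lem:length2Hpath} rules out'' does not follow. A parity-breaking chord is not an extension, and that lemma only constrains the ears themselves. Your first sentence also does not cover a chord $xy$ where $x$ is an endpoint of $P_a$ and $y\in\internal(P_a)$ later becomes an endpoint of some $P_b$. Nothing you cite excludes such a chord in $H_m$. It can occur only when $a\in I$, and it disappears from $H$ only because $x\in T_a$ is deleted. So this property has to be derived after the deletion.

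Your own matching argument gives it directly. A surviving chord $xy\in M_j$ has $j\notin I$ and $x,y\in T_j\subseteq S_j$. Hence:
\begin{itemize}
\item neither $x$ nor $y$ is an endpoint of a later ear;
\item neither is an endpoint of an earlier ear, since such vertices lie in $\mathcal S_{j-1}$ and are neither in nor adjacent to $\internal(P_j)$;
\item they cannot both be endpoints of $P_j$, since one of them lies in $\internal(P_j)$.
\end{itemize}
The rest of your proposal is correct: $H_m$ is subcubic with exactly $2(m-1)$ branching vertices, and the chords of $H$ form a matching. The matching argument, via $Z_j\subseteq T_j\subseteq S_j$ for $j\notin I$, is tighter than the paper's appeal to Lemma~\ref{lem:atmostonechord}.
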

\begin{proof}
    By construction, $H$ is bipartite.
By Lemma~\ref{lem:atmostonechord}, every vertex of $H-\mathcal{S}_m$ is incident with at most one chord of~$H$.
    
    Since each subsequent step of the construction is performed outside the neighborhoods of the existing branching vertices, no newly created branching vertex can be joined by a chord to an existing branching vertex.
Thus, no chord joins two branching vertices.

    However, a chord may join an endpoint of a newly added extension path, say $P_j$, to a vertex of the existing construction outside $\mathcal{S}_{j-1}$.
We claim that every endpoint of $P_j$ is incident with at most one such chord.
Suppose not.
Let $va$ and $vb$ be two such chords, where~$v$ is an endpoint of $P_j$.
Before adding $P_j$, the vertex~$v$ belongs to the existing construction outside $\mathcal{S}_{j-1}$, contradicting Lemma~\ref{lem:atmostonechord}.
Therefore, the claim holds.
    
    Consequently, any vertex of degree~$4$ in~$H$ created by the operation $\phi$ is not incident with a chord of~$H$.
Moreover, the chords of~$H$ form a matching in~$G$.
    
    Since all vertices of degree at most~$1$ have been recursively deleted, $H$ is a $(2,4)$-subgraph.
    Thus, $H$ is a semi-skeleton.

    Finally, we count the number of branching vertices of~$H$. 
    Each addition of an extension path increases the number of branching vertices by~$2$. 
    For a graph~$J$, let $\operatorname{core}_2(J)$ denote the graph obtained from~$J$ by recursively deleting vertices of degree at most~$1$.
Let $H_m$ denote the construction before deleting $\bigcup_{i\in I} T_i$ to obtain~$H$.
That is, $H= \operatorname{core}_2(H_m - \bigcup_{i\in I} T_i)$.
    
    We know that $H_m$ has $2(m-1)$ branching vertices.
    Let~$v$ be an arbitrary vertex of $H_m$.
The vertex~$v$ is contained in at most $\deg_{H_m}(v)$ corridors.
    If~$v$ is not a branching vertex of $H_m$, then $\operatorname{core}_2(H_m-v)$ has at least $\abs{V_{\geq 3}(H_m)}-2$ branching vertices.
If~$v$ is a branching vertex of $H_m$, then we additionally lose~$v$ itself, and thus $\operatorname{core}_2(H_m-v)$ has at least $\abs{V_{\geq 3}(H_m)} -4$ branching vertices. 
    Since each $P_i$ is added outside the closed neighborhoods of the branching vertices of $H_{i-1}$, $T_i$ contains no branching vertex of $H_{i-1}$.
Furthermore, by the definition of $T_i$, the only vertices of $T_i$ that can be branching vertices of $H_m$ are those created by the addition of $P_i$.
Thus, each $T_i$ contains at most~$2$ branching vertices of $H_m$. 
    By Lemma~\ref{prop:boundingchord}, $\abs{T_i} \le 10$ for every $i \in I$.
Therefore, for each $i\in I$,
    \[
    \abs{V_{\ge3}(\operatorname{core}_2(H_m-T_i))} \ge \abs{V_{\ge3}(H_m)} - \bigl((3+1)\cdot2+2\cdot (|T_i|-2)\bigr) \ge \abs{V_{\ge3}(H_m)} - 24.
    \]
    Thus, 
    $\abs{V_{\ge3}(H)}  = \abs{V_{\ge3}(\operatorname{core}_2(H_m - \bigcup_{i\in I}T_i))}  \ge 2(m-1) -24\abs{I}$.
\end{proof}

\begin{lemma}\label{lem:applyingskeleton}
    If $m\ge 104k\log k+132k+1$, then~$G$ contains an induced packing of~$k$ even cycles.
\end{lemma}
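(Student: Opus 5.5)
We split according to the number of non-matching indices. Let $I$ be the set of non-matching indices. If $\abs{I}\ge 11k$, then Lemma~\ref{lem:nonmatching} directly gives an induced packing of~$k$ even cycles, so we assume $\abs{I}\le 11k-1$.

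In the remaining case, let $H$ be the graph from Lemma~\ref{lem:transformtoskeleton}: remove $\bigcup_{i\in I}T_i$ from $H_m$, then recursively delete vertices of degree at most~$1$. By that lemma, $H$ is a semi-skeleton with at least $2(m-1)-24\abs{I}$ branching vertices. With $m\ge 104k\log k+132k+1$ and $\abs{I}\le 11k-1$, this count is at least
\[
2(104k\log k+132k)-24(11k-1)=208k\log k+24>208k\log k .
\]
So the constant $132$ is exactly what absorbs the $24\abs{I}$ loss.

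We then pass to a skeleton completion $H'$ of $H$, which exists by Lemma~\ref{lem:semiskeleton-ppchord} and the remark after it. Adding edges cannot destroy branching vertices, so $\abs{V_{\ge3}(H')}\ge\abs{V_{\ge3}(H)}$. Since $\deg_{H'}(v)-2\ge1$ for every branching vertex, we get $\epsilon(H')\ge\abs{V_{\ge3}(H')}>208k\log k$. Lemma~\ref{lem:manybranching} then yields an induced packing of~$k$ even cycles in~$G$. For $k=1$ the statement is trivial, since $C$ itself is an even cycle.

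The main subtlety is the bookkeeping. We need that $\abs{I}\le 11k-1$ exactly when Lemma~\ref{lem:nonmatching} fails; that lemma is stated for $11k$ indices, and having more only helps. We also need that the branching count of Lemma~\ref{lem:transformtoskeleton} transfers to the branching excess $\epsilon$. Both follow immediately from the stated lemmas, so the argument is essentially arithmetic plus the two invocations.
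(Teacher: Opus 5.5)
Your proof is correct and follows the paper's argument: you dispose of the case $\abs{I}\ge 11k$ via Lemma~\ref{lem:nonmatching}, apply Lemma~\ref{lem:transformtoskeleton}, pass to a skeleton completion, and invoke Lemma~\ref{lem:manybranching}. Your bookkeeping is slightly tighter than the paper's. The paper bounds $24\abs{I}$ by $24\cdot 11k$ and so only gets at least $208k\log k$ branching vertices. Using $\abs{I}\le 11k-1$ together with $\epsilon(H')\ge\abs{V_{\ge3}(H')}$, as you do, gives the strict inequality that Lemma~\ref{lem:manybranching} actually requires.
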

\begin{proof}
    Let~$I$ be the set of all non-matching indices.
     Let~$H$ be the graph obtained from $H_m$ by removing $\bigcup_{i\in I}T_i$, and then recursively deleting vertices of degree at most~$1$.
    By Lemma~\ref{lem:transformtoskeleton}
    $H$ is a semi-skeleton with at least $2(m-1)-24\abs{I}$ branching vertices.

    If $\abs{I}\ge 11k$, then by Lemma~\ref{lem:nonmatching}, $G$ contains an induced packing of~$k$ even cycles.
    So, we may assume that $\abs{I}< 11k$.
    Therefore, $H$ has at least 
    $2(m-1)-24(11k)\ge 208k\log k$ 
    many branching vertices.
Let $H'$ be a skeleton completion of~$H$.
Then $H'$ is a skeleton with at least $208k\log k$ branching vertices.
Then by Lemma~\ref{lem:manybranching}, $G$ contains an induced packing of~$k$ even cycles.
\end{proof}

\subsection{Parity-breaking extensions}

In the next step, we add parity-breaking paths on top of a maximal bipartite ear decomposition.
Before defining it, we observe that two parity-breaking paths whose endpoints are disjoint cannot share an internal vertex, as in such a case, we can extract a parity-preserving path.
We prove a slightly stronger statement.

    Recall that $\{(P_i, M_i, J_i, T_i, S_i) \}_{i\in [m]}$ is a maximal bipartite ear decomposition in~$G$, and $H_m=\bigcup_{j\in [m]}P_j$ and $\mathcal{S}_m=\bigcup_{j\in [m]}S_j$.
Note that $\mathcal{S}_m$ meets all the chords of $H_m$.
Therefore, $H_m-\mathcal{S}_m$ is an induced subgraph of~$G$.
Let 
    \[H^*=H_m-\mathcal{S}_m \qquad \text{and} \qquad G^*=G_m-N_{G-(V(H_m)\setminus \mathcal{S}_m})[\mathcal{S}_m].\]

\begin{lemma}\label{lem:paritybreakingbasic}
    Every $H^*$-path in $G^*$ is parity-breaking for $H_m$ in~$G$.
\end{lemma}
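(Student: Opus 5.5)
The plan is to argue by contradiction against the maximality of the fixed bipartite ear decomposition. Let $Q$ be an $H^*$-path in $G^*$ with endpoints $x,y\in V(H^*)$, and suppose $Q$ is parity-preserving for $H_m$, meaning that $H_m\cup Q$ is bipartite. The aim is to show that $Q$ is an $H_m$-extension contained in $G-N_{G-(V(H_m)\setminus\mathcal S_m)}[\mathcal S_m]$. The procedure defining the decomposition would then not have stopped at step $m$. Indeed, a shortest such extension $P_{m+1}$ would exist, and appending it (together with $M_{m+1},J_{m+1},T_{m+1},S_{m+1}$) would give a longer bipartite ear decomposition agreeing with the first $m$ paths, contradicting maximality.

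\textbf{Step 1: $Q$ is an $H_m$-path.} Its endpoints $x,y$ lie in $V(H^*)\subseteq V(H_m)$. Its internal vertices avoid $V(H^*)$ by definition. We must also check that they avoid $\mathcal S_m$ and $V(H_m)\setminus\mathcal S_m$. Since $Q\subseteq G^*$ and $G^*$ removes $N_{G-(V(H_m)\setminus\mathcal S_m)}[\mathcal S_m]\supseteq\mathcal S_m$, no vertex of $Q$ lies in $\mathcal S_m$. Hence every internal vertex of $Q$ lying in $V(H_m)$ would lie in $V(H_m)\setminus\mathcal S_m=V(H^*)$, which is impossible. So the internal vertices of $Q$ are outside $V(H_m)$.

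If $Q$ has length $1$, then $xy$ is a chord of $H_m$ (it is not an edge of $H^*$ and $x,y\notin\mathcal S_m$, so it is not an edge of $H_m$ either, since $H^*$ is induced in $H_m$). But $\mathcal S_m$ meets every chord of $H_m$, as noted before the lemma. This contradicts $x,y\notin\mathcal S_m$. Hence $Q$ is an $H_m$-path of length at least $2$, not an edge of $H_m$.

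\textbf{Step 2: $Q$ lies in the required graph.} The vertices of $Q$ avoid $N_{G-(V(H_m)\setminus\mathcal S_m)}[\mathcal S_m]$, which is exactly the set removed to form $G^*$. Here I read $G_m$ in the definition of $G^*$ as $G$. So $Q$ is contained in $G-N_{G-(V(H_m)\setminus\mathcal S_m)}[\mathcal S_m]$, and, being parity-preserving, it is an $H_m$-extension there. This yields the contradiction with maximality described above.

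\textbf{Main obstacle.} The delicate point is Step 1: ruling out that $Q$ is merely an edge of $H_m$, or that it passes through vertices of $H_m$ outside $H^*$. Both are handled by the facts that $\mathcal S_m$ covers all chords of $H_m$ and that $G^*$ excludes $\mathcal S_m$. One should also double-check the intended meaning of ``$H_m$-extension'': it should be an $H_m$-extendable path, which is exactly the parity-preserving condition via Lemma~\ref{lem:indicatornot} when $H_m$ is connected. $H_m$ is connected since it is built from $C$ by attaching ears.
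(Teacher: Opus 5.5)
Your proof is correct and follows the paper's argument: a parity-preserving $H^*$-path in $G^*$ would be an $H_m$-extension inside $G-N_{G-(V(H_m)\setminus\mathcal S_m)}[\mathcal S_m]$, contradicting the maximality of the fixed bipartite ear decomposition. The paper's proof is a one-line version of this. You additionally spell out checks the paper leaves implicit: internal vertices of $Q$ avoid $V(H_m)$, $Q$ is not an edge of $H_m$, and $G_m$ should be read as $G$.
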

\begin{proof}
    Let~$Q$ be an $H^*$-path in $G^*$. 
    If~$Q$ is parity-preserving for $H_m$, then $\{(P_i, M_i, J_i, T_i, S_i)\}_{i\in[m]}$ could be extended by adding a parity-preserving path, contradicting its maximality.
Thus, $Q$ is parity-breaking for $H_m$.
\end{proof}

\begin{lemma}\label{lem:paritybreaking}
    Let~$Q$ be an $H^*$-path in $G^*$.
    Then there is no $(H^*\cup Q)$-path~$R$ in $G^*$ such that one endpoint of~$R$ is in $V(H^*)\setminus V(Q)$ and the other endpoint is an internal vertex of~$Q$. 
\end{lemma}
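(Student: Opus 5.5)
Suppose for contradiction that such an $R$ exists. Let $Q$ have endpoints $x,y\in V(H^*)$, and let $R$ have endpoints $u\in V(H^*)\setminus V(Q)$ and $z\in V(\internal(Q))$. The plan is to build two $H^*$-paths in $G^*$ from pieces of $Q$ and $R$ and to show by parity that one of them is parity-preserving for $H_m$. That contradicts Lemma~\ref{lem:paritybreakingbasic}.

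\emph{Step 1: form the two paths.} Since $R$ is an $(H^*\cup Q)$-path, its internal vertices avoid $V(H^*)\cup V(Q)$. Set $Q_1=xQz\cup R$ and $Q_2=yQz\cup R$. Each is a path from a vertex of $H^*$ (namely $x$ or $y$) to $u\in V(H^*)$, and its internal vertices lie outside $V(H^*)$. We must check that each is a genuine $H^*$-path: it has length at least~$1$ and is not an edge of $H^*$. Since $z$ is internal to $Q$, both $xQz$ and $yQz$ have length at least~$1$. So each $Q_j$ has length at least~$2$ and cannot be an edge of $H^*$. If $u\in\{x,y\}$ the construction would degenerate, but this is excluded because $u\notin V(Q)$. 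Both paths lie in $G^*$, because $Q$ and $R$ do.

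\emph{Step 2: parity computation.} Fix a bipartition indicator $\beta$ of $H_m$. This is where I expect care to be needed, since Lemma~\ref{lem:indicatornot} requires a connected bipartite graph. $H_m$ is connected, because it is built from $C$ by ears, and it is bipartite. An $H^*$-path in $G^*$ is also an $H_m$-path. Its endpoints are in $V(H_m)$, and its internal vertices avoid $V(H_m)$: every vertex of $V(H_m)\setminus V(H^*)$ lies in $\mathcal S_m$, and $\mathcal S_m$ is removed in $G^*$. So Lemma~\ref{lem:indicatornot} applies to such paths with respect to $H_m$.

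By Lemma~\ref{lem:paritybreakingbasic}, $Q$ is parity-breaking, so
\[\abs{E(xQz)}+\abs{E(zQy)}\equiv\beta(x)+\beta(y)+1 \pmod 2.\]
If both $Q_1$ and $Q_2$ were parity-breaking, we would also have
\[\abs{E(xQz)}+\abs{E(R)}\equiv\beta(x)+\beta(u)+1 \quad\text{and}\quad \abs{E(yQz)}+\abs{E(R)}\equiv\beta(y)+\beta(u)+1.\]
Adding these two congruences gives $\abs{E(Q)}\equiv\beta(x)+\beta(y)\pmod 2$, which contradicts the first one. Hence one of $Q_1,Q_2$ is a parity-preserving $H^*$-path in $G^*$, contradicting Lemma~\ref{lem:paritybreakingbasic}.

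The main obstacle is the bookkeeping in Step~1, namely confirming that $Q_1$ and $Q_2$ really are paths with no repeated vertices. This follows from the internal vertices of $R$ being disjoint from $Q$ and $u\notin V(Q)$. One must also confirm that they are legitimate $H^*$-paths inside $G^*$, so that the maximality of the ear decomposition can be invoked via Lemma~\ref{lem:paritybreakingbasic}.
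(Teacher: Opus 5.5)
Your proof is correct and follows essentially the paper's argument. Like the paper, you split $Q$ at the attachment vertex $z$ and form $xQz\cup R$ and $yQz\cup R$. You then show by parity that one of these is a parity-preserving $H^*$-path, contradicting maximality via Lemma~\ref{lem:paritybreakingbasic}; your summed congruences replace the paper's two-case check on $\beta(u)$ versus $\abs{E(R)}$. Your extra checks fill in details the paper leaves implicit: that these are genuine $H^*$-paths in $G^*$, and that Lemma~\ref{lem:indicatornot} applies because $H_m$ is connected.
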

\begin{proof}
    Suppose that such a path~$R$ exists. 
    Let $Q=q_1q_2 \cdots q_m$ and $R=r_1r_2 \cdots r_d$ where $r_d\in V(Q)$.
Let $\beta$ be a bipartition indicator of $H_m$. 

    Let $Q_1=q_1Qr_d$ and $Q_2=q_mQr_d$.
As~$Q$ is parity-breaking for $H_m$ by Lemma~\ref{lem:paritybreakingbasic}, without loss of generality, we may assume that 
    \begin{itemize}
        \item $\beta(q_1)\not\equiv |E(Q_1)|\pmod 2$ and $\beta(q_m)\equiv |E(Q_2)|\pmod 2$.
    \end{itemize}
    If $\beta(r_1)\not\equiv |E(R)|\pmod 2$, then $Q_1\cup R$ is parity-preserving.
If $\beta(r_1)\equiv |E(R)|\pmod 2$, then $Q_2\cup R$ is parity-preserving.
Thus, in any case, we can add one more $H_m$-extension. 
    This contradicts the assumption that 
    $\{(P_i, M_i, J_i, T_i, S_i) \}_{i\in [m]}$ is a maximal bipartite ear decomposition.
\end{proof}

Let~$F$ be a linear forest, and let~$P$ and~$Q$ be two~$F$-paths
with endpoints~$p_1$, $p_2$ and~$q_1$, $q_2$, respectively.
For an~$F$-path~$X$ whose endpoints lie in the same component~$R$ of
$F$, let $I_X$ denote the unique subpath of~$R$ joining the endpoints of~$X$.

We say that~$P$ and~$Q$ are \emph{crossing with respect to~$F$}
if their endpoints lie in the same component~$R$ of~$F$, and, after
possibly reversing the ordering of~$R$ and exchanging~$P$ and~$Q$,
\[p_1 <_R q_1 <_R p_2 <_R q_2.\]
We say that~$P$ and~$Q$ are \emph{strongly crossing with respect to~$F$}
if they are crossing with respect to~$F$ and there is no edge in~$R$
between $\{p_1,p_2\}$ and $\{q_1,q_2\}$.

We say that~$P$ and~$Q$ are \emph{nested with respect to~$F$}
if their endpoints lie in the same component of~$F$, and either
$I_P$ is a subpath of $I_Q$ or $I_Q$ is a subpath of $I_P$.
We say that~$P$ and~$Q$ are \emph{strongly nested with respect to~$F$}
if they are nested with respect to~$F$ and there is no edge in~$F$
between $\{p_1,p_2\}$ and $\{q_1,q_2\}$.

We define a \emph{parity-breaking extension}
$\{(L_j,Y_j)\}_{j\in[q]}$
as follows.

\begin{itemize}
    \item Suppose that $L_1,\ldots,L_{i-1}$, $Y_1,\ldots,Y_{i-1}$ have been defined.
    At step~$i$, we apply an applicable rule in the following list.
    \begin{enumerate}[label=(\arabic*)]
        \item\label{prior1} If~$G$ contains an $H^*$-path contained in $G^*-\bigcup_{j\in [i-1]}Y_j$ that connects distinct components of $H^*-\bigcup_{j\in [i-1]}Y_j$, then we choose a shortest such path and call it $L_i$.
        \item\label{prior2} If~$G$ contains two vertex-disjoint $H^*$-paths contained in $G^*-\bigcup_{j\in [i-1]}Y_j$ that are strongly crossing with respect to $H^*-\bigcup_{j\in [i-1]}Y_j$, then we choose such a pair.
We repeatedly shorten the obtained paths as follows.
If one of them, say~$P$, has a chord $xy$, then we replace $xPy$ with $xy$.
Let $L_i$ be the union of the resulting paths.
        \item\label{prior3} If~$G$ contains two vertex-disjoint $H^*$-paths contained in $G^*-\bigcup_{j\in [i-1]}Y_j$ that are strongly nested with respect to $H^*-\bigcup_{j\in [i-1]}Y_j$, then we choose such a pair.
        We repeatedly shorten the obtained paths as follows.
If one of them, say~$P$, has a chord $xy$, then we replace $xPy$ with $xy$.
Let $L_i$ be the union of the resulting paths.
    \end{enumerate}
\item Let $Y_i=N_{H^*}[V_1(L_i)]$.
   If none of the above rules applies, then we stop the procedure.
\end{itemize}

We say that a parity-breaking extension $\{(L_j, Y_j) \}_{j\in [q]}$ is \emph{maximal} if there is no parity-breaking extension $\{(L_j', Y_j') \}_{j\in [q']}$ such that $q< q'$ and for all $i\in [q]$, $L_i=L_i'$. 

In the remainder of this section, we fix a maximal parity-breaking extension $\{(L_j, Y_j) \}_{j\in [q]}$.
Let 
\[\widehat{H}=H^*\cup \left(\bigcup_{i\in [q]}L_i\right).\]
We first observe that 
$\widehat{H}$ is an induced subgraph of~$G$.

\begin{lemma}\label{lem:inducedHstar}
    The subgraphs $L_1,\ldots,L_q$ are pairwise vertex-disjoint, and $\widehat{H}$ is an induced subgraph of~$G$.
\end{lemma}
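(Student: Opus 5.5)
We prove both assertions together by induction on $i$, showing that $\widehat H_i:=H^*\cup\bigcup_{j\le i}L_j$ is an induced subgraph of $G$ and that $L_i$ is disjoint from $L_1,\dots,L_{i-1}$. The base case $i=0$ holds: $\mathcal S_m$ meets every chord of $H_m$, so $H^*=H_m-\mathcal S_m$ is induced.

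\emph{Disjointness.} Each $L_i$ lies in $G^*-\bigcup_{j<i}Y_j$, so its endpoints avoid $\bigcup_{j<i}Y_j$. For its internal vertices we argue by contradiction. Suppose an internal vertex $x$ of a path $Q\subseteq L_i$ lies on some path $Q'\subseteq L_j$ with $j<i$. Follow $Q$ from $x$ towards one of its endpoints. This yields a subpath that, together with a suitable subpath of $Q'$, forms an $(H^*\cup Q')$-path in $G^*$. One endpoint of this path lies in $V(H^*)\setminus V(Q')$ and the other is an internal vertex of $Q'$, or more precisely the first vertex of $Q'$ met when walking along $Q$ from an endpoint of $Q$. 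Lemma~\ref{lem:paritybreaking} excludes exactly this configuration. The endpoint of $Q$ is not an endpoint of $Q'$, because the endpoints of $Q'$ lie in $Y_j$ and $Y_j$ is removed. The same argument shows that the two paths forming $L_i$ in rules~\ref{prior2} and~\ref{prior3} are internally disjoint. The rules require them to be vertex-disjoint anyway.

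\emph{Inducedness.} Let $e=ab$ be an edge of $G$ with both ends in $V(\widehat H_i)$ that is not an edge of $\widehat H_i$. We split into cases according to where $a$ and $b$ lie.

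\begin{enumerate}
\item Both $a,b\in V(H^*)$. This is excluded by the base case.
\item One end is an internal vertex of a path $Q\subseteq L_i$ and the other lies in $H^*$ outside $Q$. Then the edge $e$ is itself an $(H^*\cup Q)$-path of length one of the kind forbidden by Lemma~\ref{lem:paritybreaking}.
\item The endpoint set is contained in $V(Q)$ for a single path $Q$. For rules~\ref{prior2} and~\ref{prior3}, the shortening step eliminated all such chords. For rule~\ref{prior1}, $Q$ is a shortest such path, and replacing $xQy$ by the chord $xy$ gives a shorter path connecting the same two components, a contradiction.
\item The two ends are internal vertices of distinct paths, $Q\subseteq L_i$ and $Q'\subseteq L_j$ with $j\le i$. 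Then $e$ together with a subpath of $Q$ running to an endpoint of $Q$ is an $(H^*\cup Q')$-path, which again contradicts Lemma~\ref{lem:paritybreaking}. We must make sure that this endpoint of $Q$ is not on $Q'$, which follows from the disjointness just established.
\item One end of $e$ is an endpoint of $Q$ on $H^*$. This is covered by case 2 with the roles of $Q$ and $Q'$ exchanged, or by the fact that the endpoints of earlier $L_j$ lie in $Y_j=N_{H^*}[V_1(L_j)]$. The neighbours of these endpoints in $H^*$ were deleted, but their neighbours outside $H^*$ were not, so here we rely on Lemma~\ref{lem:paritybreaking}.
\end{enumerate}

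The main obstacle is the bookkeeping in case 4 and in the disjointness step. There, Lemma~\ref{lem:paritybreaking} is stated for one $H^*$-path $Q$ at a time, whereas here $Q'$ lives in $G^*$ while later paths live in $G^*-\bigcup Y_j$. We need to verify that the constructed path has an endpoint in $V(H^*)\setminus V(Q')$ and otherwise avoids $H^*\cup Q'$. To arrange this, we choose the shortest offending subpath, meaning the first hit of $Q'$ when walking along $Q$.
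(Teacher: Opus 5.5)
Your proof is correct and is essentially the paper's argument. Disjointness comes from taking the subpath of one path from an endpoint up to its first vertex on the other path and invoking Lemma~\ref{lem:paritybreaking}; inducedness comes from minimality/shortening for chords within a single component, plus Lemma~\ref{lem:paritybreaking} for edges from an internal vertex to $H^*$ or to another path, and your induction on $i$ is only a reorganization. Two minor points: the phrase ``together with a suitable subpath of $Q'$'' is spurious, since the forbidden $(H^*\cup Q')$-path is just that subpath of $Q$, as you then say; and your case~5 is already covered by cases~1--3.
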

\begin{proof}
Recall that $H^*$ is an induced subgraph of~$G$.
Moreover, as we choose $L_i$ with the minimum number of vertices in each iteration,
every component of $L_i$ is an induced path in~$G$.
Also, when $L_i$ is the union of two paths, say $L_i'$ and $L_i''$,
there is no edge between $L_i'$ and $L_i''$; otherwise, this would
contradict Lemma~\ref{lem:paritybreaking}.

We claim that $L_1,\ldots,L_q$ are pairwise vertex-disjoint.
By construction, for distinct $i,j\in[q]$, we have $V_1(L_i)\cap V_1(L_j)=\emptyset$.
Thus, if $L_i$ and $L_j$ intersect, then they have a common vertex in $V_2(L_i)\cap V_2(L_j)$.

Suppose for a contradiction that for some $i\ne j$, $V_2(L_i)\cap V_2(L_j)$ contains a vertex~$z$. 
Let $x_j$ be an endpoint of the component of $L_j$ containing~$z$, and let~$v$ be the first vertex of $L_j$, starting from $x_j$, that belongs to $L_i$.
Let $L_i'$ be the component of $L_i$ containing~$v$.
Then $x_jL_jv$
is an $(H^*\cup L_i')$-path in $G^*$ with one endpoint
$x_j\in V(H^*)\setminus V(L_i')$ and the other endpoint
$v\in\internal(L_i')$, contradicting
Lemma~\ref{lem:paritybreaking}.
Thus, $L_1,\ldots,L_q$ are vertex-disjoint.

We next prove that for each $i\in [q]$, there is no edge between $V_2(L_i)$ and $\widehat{H}-V(L_i)$.
Suppose for contradiction that there is an edge $ab$ with $a\in V_2(L_i)$ and $b\in V(\widehat{H})\setminus V(L_i)$.
Let $L_i'$ be the component of $L_i$ containing~$a$.
If~$b$ is a vertex of $H^*$, then $ab$ is an $(H^*\cup L_i')$-path in $G^*$.
If~$b$ is a vertex of $V(\internal(L_j))$ for some $j\neq i$, then the union of $ab$ and a subpath of $L_j$ from~$b$ to one of its endpoint is an $(H^*\cup L_i')$-path.
This contradicts Lemma~\ref{lem:paritybreaking}.

We deduce that $\widehat{H}$ is an induced subgraph of~$G$.
\end{proof}

Now, we want to show that if~$q$ is large, then we can find an induced packing of~$k$ even cycles.
Recall the definition of $\aux(F,\mathcal Q)$ defined in Section~\ref{sec:pathmatching}.
To apply Proposition~\ref{prop:many-links}, we show that the corresponding $\aux(F,\mathcal Q)$ satisfies the consecutive serial property. 

\begin{lemma}\label{lem:consecutiveserial}
    Let $\mathcal{L}$ be the family of all components of graphs in $\{L_j:j\in [q]\}$.
The path-matching graph $\aux(H^*,\mathcal{L})$ satisfies the consecutive serial property.
\end{lemma}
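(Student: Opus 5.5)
We argue by contradiction. Suppose $\aux(H^*,\mathcal{L})$ has two consecutive serial links $e=ab$ and $f=cd$, corresponding to components $Q_e,Q_f\in\mathcal{L}$. Both endpoints of $e$ and $f$ then lie on one component of $H^*$, say the path $R$, in the order $a<_R b<_R c<_R d$. Moreover, no endpoint of any link lies in the interior of $aRd$ other than $a,b,c,d$. Let $i\le j$ be the indices with $Q_e\subseteq L_i$ and $Q_f\subseteq L_j$. The plan is to show that at the step $j$ where the later of the two paths was chosen (or already at step $i$ if $i=j$), a higher-priority rule was applicable. That contradicts the rule actually used, or else the choice of $L_i$ and $L_j$ as minimal.

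\textbf{Case $i=j$.} Here $L_i$ is a pair of paths produced by rule (2) or (3). The endpoint order $a<b<c<d$ along one component is neither crossing nor nested. Shortening only replaces subpaths by chords and never changes endpoints, so the configuration produced by rule (2) or (3) is preserved. This is a contradiction. Rule (1) produces a single path, so the case does not arise there.

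\textbf{Case $i<j$.} Consider the graph $H^*-\bigcup_{\ell<j}Y_\ell$ at step $j$. The component containing $c,d$ contains $cRd$. We first check that the segment $bRc$ is not destroyed, using $Y_i=N_{H^*}[V_1(L_i)]$. The set $Y_i$ removes $a$, $b$ and their $H^*$-neighbours. Any other removal inside $aRd$ would come from a link endpoint in the interior of $aRd$, and the consecutive serial property excludes this.

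So at step $j$, the vertex $c$ lies on a component whose end near $b$ is at distance about $1$ from $b$. The link $f$ is a same-component path, which rules out rule (1) for $Q_f$; hence $f$ came from rule (2) or (3). Rule (2) or (3) selects pairs, so $Q_f$ has a partner $Q'$ in $L_j$. That partner crosses $Q_f$ or is nested with it on the same component. If it crosses or contains $Q_f$, one of its endpoints lies in the interior of $aRd$ or separates $c$ from $d$ in a way that places a link endpoint in $bRc$ or inside $cRd$. A partner nested inside $I_f$ would make $c,d$ non-consecutive with respect to $V(N)$. Either way we contradict the consecutive serial assumption.

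\textbf{Main obstacle.} The hardest step is the bookkeeping when $Q'$ lies partly outside $aRd$. One has to verify that the strong crossing or strong nesting condition forces one of the endpoints of $Q'$ strictly between $a$ and $d$ but distinct from $a,b,c,d$. The worry is that some endpoint could escape through a deleted vertex of $Y_\ell$. I would handle this by noting that every vertex of $Y_\ell$ is within distance $1$ of a link endpoint, and that the link endpoints adjacent to $aRd$ are exactly $a$ and $d$. This pins down the structure. It is also where the strongness conditions, which exclude edges between endpoints, are needed.
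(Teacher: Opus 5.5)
Your proof is correct and is essentially the paper's argument recast as a contradiction. The paper inducts on the steps of the parity-breaking extension and shows that the links added at step $i$ never form a consecutive serial pair with each other or with earlier links; this matches your analysis of the later link $f$, where rule (1) is impossible since $cRd$ survives in $H^*-\bigcup_{\ell<j}Y_\ell$, and under rules (2) or (3) the partner of $Q_f$ has an endpoint strictly between $b$ and $d$ because the component containing $c$ does not contain $b\in Y_i$. Two small remarks: the assumption $i\le j$ tacitly uses the symmetry of reversing $R$, and the obstacle in your last paragraph is not real, since the strongness conditions are never needed—only the crossing/nested relation and the deletion of $b$.
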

\begin{proof}
For each $i\in[q]$, let $\mathcal{L}_i$ be the family of all components
of the graphs in $\{L_j:j\in[i]\}$, and let
$J_i:=\aux(H^*,\mathcal{L}_i)$.
We prove by induction on~$i$ that $J_i$ satisfies the consecutive
serial property.

The claim is trivial for $i=1$.
Suppose that $i\ge2$ and that
$J_{i-1}$ satisfies the consecutive serial property.
Set
\[
F_{i-1}:=H^*-\bigcup_{j\in[i-1]}Y_j.
\]
Since every endpoint of a path in $\mathcal{L}_{i-1}$ belongs to
$\bigcup_{j\in[i-1]}Y_j$, no component of $F_{i-1}$ contains an
endpoint of a link of $J_{i-1}$.

We distinguish the rule used to construct $L_i$.

Suppose first that $L_i$ is chosen by Rule~\ref{prior1}, and let~$x$ and~$y$ be its endpoints.
If~$x$ and~$y$ lie in different components
of $H^*$, then the link $xy$ is not a same-path link and hence cannot
belong to a consecutive serial pair.

Thus, suppose that~$x$ and~$y$ lie in the same component~$R$ of $H^*$.
Since they lie in distinct components of $F_{i-1}$, the subpath
$xRy$ contains an endpoint of a link of $J_{i-1}$ in its interior.
Therefore, the new link $xy$ cannot form a consecutive serial pair
with a link of $J_{i-1}$.
As adding a new link cannot create a new
consecutive serial pair consisting of two old links, $J_i$ satisfies
the consecutive serial property.

Now suppose that $L_i$ is chosen by Rule~\ref{prior2} or
Rule~\ref{prior3}.
Let~$P$ and~$Q$ be the two components of $L_i$,
and let~$e$ and~$f$ be their corresponding links.
By the definition
of these rules, the four endpoints of~$e$ and~$f$ lie in the same
component~$R$ of $F_{i-1}$, and~$e$ and~$f$ are crossing or nested.
In particular, $e$ and~$f$ are not serial, and hence they do not form
a consecutive serial pair with each other.

It remains to consider a pair consisting of a new link, say~$e$, and
an old link~$g$ of $J_{i-1}$.
Suppose that~$e$ and~$g$ are serial.
Since~$R$ contains no endpoint of an old link, both endpoints of~$g$
lie on the same side of~$R$.
On the other hand, since~$e$ and~$f$
are crossing or nested, some endpoint of~$f$ lies strictly between
the extreme endpoints of~$e$ and~$g$.
Hence~$e$ and~$g$ are not
consecutive serial.
The same argument applies with~$e$ and~$f$
interchanged.

Therefore, no new consecutive serial pair is created when passing
from $J_{i-1}$ to $J_i$.
Thus $J_i$ satisfies the consecutive serial
property.

By induction, $J_q=\aux(H^*,\mathcal L)$ satisfies the consecutive
serial property.
\end{proof}

\begin{lemma}\label{lem:caselargeq}    
Let $m'$ be the number of components of $H^*$.
If $q>11m'+23g_{\theta}(k)$, then~$G$ contains an induced packing of~$k$ even cycles.
\end{lemma}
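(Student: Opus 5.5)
We plan to apply Proposition~\ref{prop:many-links} to the path-matching graph $J=\aux(H^*,\mathcal L)$, where $\mathcal L$ is the family of all components of $L_1,\dots,L_q$. By Lemma~\ref{lem:inducedHstar}, the $L_j$ are pairwise vertex-disjoint. By construction, the endpoints of distinct paths of $\mathcal L$ are distinct, because endpoints are removed via the sets $Y_j$. Hence $N(\mathcal L)$ is a matching, and $J$ is a genuine path-matching graph. The paths of $J$ are the $m'$ components of $H^*$; each is a path, since $H^*=H_m-\mathcal S_m$ and $\mathcal S_m$ contains all branching vertices. By Lemma~\ref{lem:consecutiveserial}, $J$ has the consecutive serial property. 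Each step contributes at least one link, so $\abs{N(\mathcal L)}\ge q>11m'+23g_\theta(k)$. Proposition~\ref{prop:many-links} therefore yields $k$ pairwise vertex-disjoint even-$N$ cycles $D_1,\dots,D_k$ in $J$.

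Next, we lift each $D_t$ to $G$. Replace each link $x_Qy_Q$ of $D_t$ by the path $Q\in\mathcal L$, obtaining a cycle $D_t'$ in $\widehat H$. Each $Q$ is parity-breaking for $H_m$ by Lemma~\ref{lem:paritybreakingbasic}, so with respect to a bipartition indicator $\beta$ of $H_m$ we have $\abs{E(Q)}\not\equiv\beta(x_Q)+\beta(y_Q)\pmod 2$. Each $H^*$-edge contributes a parity change of exactly one. Summing around $D_t'$, the total length is congruent to the number of links used modulo~$2$, which is even. Hence each $D_t'$ is an even cycle. The lifted cycles are vertex-disjoint, because the paths in $\mathcal L$ are internally disjoint from each other and from $H^*$.

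The main obstacle is inducedness: we need no edges of $G$ between distinct $D_s'$ and $D_t'$. We will use that $\widehat H$ is an induced subgraph of $G$ (Lemma~\ref{lem:inducedHstar}), so every edge between them is an edge of $\widehat H$. By the proof of Lemma~\ref{lem:inducedHstar}, internal vertices of the $L_i$ have no neighbours in $\widehat H$ outside $L_i$. Thus any such edge is an edge of $H^*$ joining two vertices of different cycles, or an edge of some $Q$ incident with an endpoint. In the latter case, both ends lie on $Q$ and hence in one lifted cycle, unless $Q$ is not used; in that case the edge touches at most one cycle, which is harmless.

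So the remaining danger is an $H^*$-edge $uv$ with $u\in D_s$ and $v\in D_t$, i.e.\ a path edge of $J$ between two of the disjoint cycles. We will handle this by passing from $J$ to a modified path-matching graph in which adjacent clashes cannot occur. The first idea is to subdivide each path edge of $H^*$ so that nearby vertices are separated. This fails, since subdividing does not change which edges exist between the cycles. The working approach is to apply Proposition~\ref{prop:many-links} with $2k$ or $ck$ cycles (still within the $O(k\log k)$ regime after adjusting constants) and then pick a subfamily greedily. Each cycle $D_t$ meets each $H^*$-path in boundedly many maximal segments per link, and such a segment is adjacent along $H^*$ to at most two vertices outside it. So we would build a conflict graph on the cycles and argue that it has bounded average degree, or is planar-like because the conflicts follow the linear order of the paths. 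If this bound cannot be made uniform, a fallback is to use the slack $q>11m'+23g_\theta(k)$ together with the $Y_j$-separation: endpoints of links are pairwise at distance at least~$2$ in $H^*$, and rules~\ref{prior2} and~\ref{prior3} require strong crossing or nesting, which forbids edges between endpoints. The hope is that, after rerouting, disjoint even-$N$ cycles can be chosen so that no two of them contain consecutive vertices of any path of $H^*$.
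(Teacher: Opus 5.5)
Your setup matches the paper: applying Proposition~\ref{prop:many-links} to $\aux(H^*,\mathcal L)$, lifting links to paths, the parity count, and vertex-disjointness are all fine. The gap is exactly at the step you flag as the main obstacle. You never show that no edge of $H^*$ joins two lifted cycles. The subdivision idea fails, as you note yourself. The route ``apply the proposition with $ck$ cycles and prune greedily'' does not prove the lemma as stated, because the hypothesis only gives $q>11m'+23g_\theta(k)$, not $q>11m'+23g_\theta(ck)$. You also never argue that the conflict graph has bounded degree. The final fallback is only a hope, so inducedness remains unproved.

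The missing idea is a local degree argument, which makes any pruning unnecessary. Suppose $uv\in E(H^*)$ with $u\in V(D_s)$ and $v\in V(D_t)$, $s\neq t$. In $J$, the vertex $u$ has at most two path-neighbours, one of which is $v$, and at most one link-partner. Since $v\notin V(D_s)$, the cycle $D_s$ must use the link at $u$. Symmetrically, $D_t$ uses the link at $v$. So $u$ and $v$ are both link endpoints and are adjacent in $H^*$.

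Endpoints of distinct links are never adjacent in $H^*$. If the links come from steps $i<j$, the endpoints of $L_j$ avoid $Y_i=N_{H^*}[V_1(L_i)]$. If they are the two components of one $L_i$ from Rule~\ref{prior2} or Rule~\ref{prior3}, the word ``strongly'' in the definition forbids exactly such an edge. You state both facts in your fallback but do not combine them with the degree observation. Hence $u$ and $v$ are the two endpoints of the same path in $\mathcal L$. Then the link at $u$ is $uv$ itself, so $v\in V(D_s)$, contradicting disjointness.

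Combined with your observation that every edge of $\widehat H$ between two lifted cycles lies in $H^*$, this gives the induced packing of exactly $k$ even cycles, with no change of constants.
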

\begin{proof}
    Suppose that $q>11m'+23g_{\theta}(k)$.
    Let $\mathcal{L}$ be the family of all components of graphs in $\{L_j:j\in [q]\}$, and
    $N=N(\mathcal{L})$.
    By applying Proposition~\ref{prop:many-links} to $\aux(H^*, \mathcal{L})$, 
    we have that $\aux(H^*, \mathcal{L})$ contains~$k$ pairwise vertex-disjoint even-$N$ cycles, say $C_1, \ldots, C_k$. 
    For each $j\in[k]$, let $D_j$ be the cycle in~$G$ obtained
from $C_j$ by replacing each link $x_iy_i\in E(C_j)\cap N$
with the corresponding component of $L_i$.

    We claim that $\{D_1, \ldots, D_k\}$ is an induced packing of even cycles in~$G$.
Note that each $C_i$ is an even-$N$ cycle, and so each $D_i$ contains an even number of paths in $\mathcal{L}$.
Since each component of $L_j$ is parity-breaking with respect to the bipartition of $H_m$ by Lemma~\ref{lem:paritybreakingbasic}, and $H^*$ inherits this bipartition,
    replacing an even number of links of $C_i$ by the corresponding paths results in an even cycle.
Thus, each $D_i$ is an even cycle.

    As $C_1,\ldots,C_k$ are pairwise vertex-disjoint and the paths
in $\mathcal{L}$ are pairwise vertex-disjoint by Lemma~\ref{lem:inducedHstar}, the cycles
$D_1,\ldots,D_k$ are pairwise vertex-disjoint.

Suppose for a contradiction that there is an edge $ab\in E(G)$
joining two distinct cycles $D,D'\in\{D_1,\ldots,D_k\}$, where
$a\in V(D)$ and $b\in V(D')$.
By Lemma~\ref{lem:inducedHstar},
$\widehat{H}$ is an induced subgraph of~$G$.
Thus, $ab$ is an edge of this
subgraph.

We first observe that $ab\in E(H^*)$.
If $ab$ is an edge of some $L\in\mathcal{L}$, then one of $D,D'$ containing an internal vertex of~$L$ must contain the whole path
$L$, since~$L$ replaces a single link of the corresponding auxiliary cycle.
This contradicts the fact that~$D$ and $D'$ are vertex-disjoint.

Let~$C$ and $C'$ be the cycles in $\aux(H^*, \mathcal{L})$ corresponding to
$D$ and $D'$, respectively.
Since $ab\in E(H^*)$ but $ab\notin E(C)$, the cycle~$C$ must use a link incident with
$a$.
Thus, $a$ is an endpoint of a path in $\mathcal{L}$.
Similarly, $b$ is an endpoint of a path in $\mathcal{L}$.

By the construction of $\mathcal{L}$, endpoints of distinct paths
in $\mathcal{L}$ are not adjacent in $H^*$.
Since $ab\in E(H^*)$,
the vertices~$a$ and~$b$ must therefore be the endpoints of the
same path $L\in\mathcal{L}$.
Consequently, the link corresponding to~$L$ belongs to both~$C$ and $C'$, contradicting the fact that~$C$ and $C'$ are
vertex-disjoint.

We conclude that  $\{D_1, \ldots, D_k\}$ is an induced packing of even cycles.
\end{proof}

By Lemma~\ref{lem:caselargeq}, we may assume that
$q\le 11m'+23g_{\theta}(k)$, where $m'$ is the number of components of $H^*$.

Note that there may still be crossing or nested pairs of $H^*$-paths.
However, any such pair is neither strongly crossing nor strongly nested.
Thus, the endpoints of the two paths are at distance at most~$1$ in $H^*$.
Consequently, after removing the closed $H^*$-neighborhoods of their
endpoints, no remaining $H^*$-path can cross or be nested with either
of the two paths.

Therefore, we can greedily select such pairs.
This either yields many
pairwise separated pairs, or, after removing the closed neighborhoods
of the endpoints of the selected pairs, eliminates all remaining
crossing and nested pairs.

\begin{lemma}\label{lem:removingcrossingnested}
    Either~$G$ contains an induced packing of~$k$ even cycles, or it contains a vertex set $Z\subseteq V(H^*)$ of size at most $4k$ such that there are no two internally disjoint $H^*$-paths contained in $G^*-\bigcup_{i\in [q]}Y_i-N_{H^*}[Z]$ that are crossing or nested with respect to  $H^*-\bigcup_{i\in [q]}Y_i-N_{H^*}[Z]$.
\end{lemma}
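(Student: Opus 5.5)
We will run a greedy procedure. Write $F_0=H^*-\bigcup_{i\in[q]}Y_i$ and $Z_0=\emptyset$. At step $t\ge 1$, suppose $Z_{t-1}\subseteq V(H^*)$ has been defined, and consider $G^*-\bigcup_{i}Y_i-N_{H^*}[Z_{t-1}]$ together with $F_{t-1}=F_0-N_{H^*}[Z_{t-1}]$. If this graph contains no pair of internally disjoint $H^*$-paths that cross or are nested with respect to $F_{t-1}$, we stop and output $Z=Z_{t-1}$. Otherwise we pick such a pair $P_t,Q_t$, choosing it minimal in the same way as in Rules~\ref{prior2} and~\ref{prior3}, so that both paths are induced. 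We then let $Z_t=Z_{t-1}\cup\{\text{endpoints of }P_t,Q_t\}$, adding at most four vertices. If the procedure runs for $k$ steps, we will extract an induced packing of $k$ even cycles. Otherwise it stops with $\abs{Z}\le 4(k-1)\le 4k$, as required.

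\emph{The pair is not strong.} Maximality of the parity-breaking extension shows that Rules~\ref{prior2} and~\ref{prior3} do not apply in $G^*-\bigcup_iY_i$. Deleting $N_{H^*}[Z_{t-1}]$ only shrinks the graph, and a pair crossing or nested with respect to $F_{t-1}$ is also crossing or nested with respect to $F_0$ on the same component of $F_0$. Hence $P_t,Q_t$ is not strongly crossing or strongly nested. (If the two paths are not vertex-disjoint, Lemma~\ref{lem:paritybreaking} gives a contradiction, because they are internally disjoint and have distinct endpoints after the minimal choice.) So some endpoint of $P_t$ is adjacent in $H^*$ to some endpoint of $Q_t$.

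\emph{An even cycle.} From such a pair we build an even cycle $D_t$. By Lemma~\ref{lem:paritybreakingbasic} each of $P_t,Q_t$ is parity-breaking for $H_m$. Let the adjacent endpoints be $p,q$, and let the other endpoints be $p',q'$, all lying on the same component $R$ of $F_{t-1}$. Then $P_t\cup pq\cup Q_t\cup p'Rq'$ is a cycle: the crossing or nested configuration together with the adjacency $pq$ ensures that $p'Rq'$ avoids $p$ and $q$, after choosing which endpoints to pair. It contains exactly two parity-breaking pieces and otherwise only edges of the bipartite graph $H^*$, so it is even.

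\emph{Separation.} The cycles $D_t$ must be pairwise vertex-disjoint and have no edges between them. Every endpoint of $P_s,Q_s$ lies in $Z_s$, and later steps work in $F_0-N_{H^*}[Z_s]$. This keeps the $H^*$-parts of later cycles away from earlier endpoints. Interactions between internal vertices of the paths, and between internal vertices and $H^*$, should be excluded by Lemma~\ref{lem:paritybreaking}, arguing as in Lemma~\ref{lem:inducedHstar}. An edge from an internal vertex of $P_s$ to an $H^*$-vertex or path of a later cycle would yield an $(H^*\cup P_s)$-path ending inside $P_s$.

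\emph{Main obstacle.} The hardest part is controlling the segment $p'Rq'$ of $D_s$. It is not protected, since only the endpoints enter $Z_s$ and not the whole interval, so a later cycle could reuse vertices of $I_{P_s}$. To handle this we will define $Z_t$ and $D_t$ more carefully. We first choose the pair so that the interval spanned by its four endpoints is inclusion-minimal. Then we argue that the remaining vertices of this interval are cut off: once its endpoints' neighbourhoods are removed, the interval becomes a separate component of $F_t$ hit by no further relevant paths. Any later crossing or nested pair on this interval would, together with $P_t,Q_t$, contradict either minimality or the non-strongness established above. If this fails, a fallback is to put at most two extra boundary vertices into $Z_t$, replacing the bound $4k$ by a larger constant times $k$, which still suffices for the main theorem.
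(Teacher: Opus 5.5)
Your greedy procedure and the count $\abs{Z}\le 4(k-1)$ match the paper's. However, the step you call the main obstacle is the heart of the lemma, and your proposal leaves it open. The inclusion-minimal choice is not needed, and the ``cut off'' claim is never proved. The fallback of adding vertices to $Z_t$ would break the bound $4k$ and also misses the real danger: a later pair whose four endpoints all lie inside one piece of an earlier interval.

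The missing idea is to apply non-strongness to a \emph{later} path against an \emph{earlier} one, not to $P_t,Q_t$ against each other. Fix $r<s$. Let $X$ be any $H^*$-path in $G^*-\bigcup_iY_i-N_{H^*}[Z_{s-1}]$ whose endpoints lie on the same component of $F_0$ as those of $P_r$. Suppose $X$ were crossing or nested with $P_r$ with respect to $F_0$. The endpoints of $X$ avoid $N_{H^*}[Z_r]$, so they are distinct from and non-adjacent to those of $P_r$. Also, $X$ and $P_r$ are vertex-disjoint by Lemma~\ref{lem:paritybreaking}. So they would be strongly crossing or strongly nested in $G^*-\bigcup_iY_i$, contradicting the termination of Rules~\ref{prior2} and~\ref{prior3}. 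Hence $X$ is serial with $P_r$, and likewise with $Q_r$.

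Applying this to $X\in\{P_s,Q_s\}$ shows that $I(P_s)\cup I(Q_s)$ is disjoint from $I(P_r)\cup I(Q_r)$. An edge of $H^*$ between these two intervals would join an endpoint of $P_s$ or $Q_s$ to a vertex of $Z_r$, which is excluded. Together with Lemma~\ref{lem:paritybreaking} for the internal vertices, this gives the induced packing with nothing extra added to $Z$.

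Separately, your cycle $P_t\cup pq\cup Q_t\cup p'Rq'$ is not always a cycle. Take the crossing order $p_1<_Rq_1<_Rp_2<_Rq_2$ where $q_1p_2$ is the only adjacency among the endpoints. Then you are forced to take $\{p',q'\}=\{p_1,q_2\}$, and $p_1Rq_2$ passes through $q_1$ and $p_2$; no re-pairing helps. Moreover, internally disjoint nested paths may share an endpoint, and your minimal choice does not prevent this. In that case non-strongness gives no adjacency at all.

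Both problems disappear if you take $D_t$ to be the symmetric difference of the odd cycles $P_t\cup I(P_t)$ and $Q_t\cup I(Q_t)$; in the crossing case this is $P_t\cup Q_t\cup p_1Rq_1\cup p_2Rq_2$. This is a cycle for every internally disjoint crossing or nested pair, and it is even as the symmetric difference of two odd cycles. Your ``the pair is not strong'' step then becomes unnecessary.
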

\begin{proof}
Let
\[
F_0=H^*-\bigcup_{i\in[q]}Y_i
\qquad\text{and}\qquad
G_0=G^*-\bigcup_{i\in[q]}Y_i.
\]
By the termination of the priority procedure, $G_0$ contains no pair
of vertex-disjoint $H^*$-paths that are strongly crossing or strongly
nested with respect to $F_0$.

We construct a sequence of pairs of $H^*$-paths greedily.
Suppose that pairs
$(P_1,Q_1),\ldots,(P_{s-1},Q_{s-1})$
have been chosen.
For each $j\in[s-1]$, let
$A_j=V_1(P_j)\cup V_1(Q_j)$, and
$Z_{s-1}=\bigcup_{j\in[s-1]}A_j$.
Set
\[
F_{s-1}=F_0-N_{H^*}[Z_{s-1}]
\quad\text{and}\quad
G_{s-1}=G_0-N_{H^*}[Z_{s-1}].
\]
If $G_{s-1}$ contains two internally disjoint $H^*$-paths $P_s$ and $Q_s$
that are crossing or nested with respect to $F_{s-1}$, then we choose
such a pair and continue.
Otherwise, we stop the procedure.

For each chosen pair $(P_s,Q_s)$, let $I(P_s)$ and $I(Q_s)$ denote
the subpaths of $F_{s-1}$ between the endpoints of $P_s$ and $Q_s$,
respectively.
Since every $H^*$-path in $G^*$ is parity-breaking for $H_m$, both
$P_s\cup I(P_s)$ and $Q_s\cup I(Q_s)$
are odd cycles.
As $P_s$ and $Q_s$ are crossing or nested, the symmetric difference
of these two cycles is a cycle; denote it by $D_s$.
Since it is the symmetric difference of two odd cycles, $D_s$ is even.

We claim that the cycles obtained in distinct iterations are
vertex-disjoint and anticomplete.
We first observe the following.

\begin{claim}\label{claim:serial}
For $r<s$, every $H^*$-path contained in
$G_r$ is serial with both $P_r$ and $Q_r$ whenever their endpoints
lie in the same component of $F_0$.
\end{claim}
\begin{clproof}
Let~$R$ be such an $H^*$-path.
Suppose, for instance, that
$R$ is crossing or nested with $P_r$.
Since the endpoints of~$R$
avoid $N_{H^*}[A_r]$, no endpoint of~$R$ is equal or adjacent to an
endpoint of $P_r$.
Moreover, Lemma~\ref{lem:paritybreaking} implies
that~$R$ and $P_r$ are vertex-disjoint.
Hence~$R$ and $P_r$ are
strongly crossing or strongly nested with respect to $F_0$, a
contradiction to the termination of the priority procedure.
The same argument applies to $Q_r$.
\end{clproof}

Let $r<s$.
By Claim~\ref{claim:serial}, each of $P_s$ and $Q_s$ is serial with both $P_r$
and $Q_r$.
Since $P_s$ and $Q_s$ are crossing or nested with each
other, $I(P_s)\cup I(Q_s)$
and 
$I(P_r)\cup I(Q_r)$ are vertex-disjoint.
Furthermore, since the endpoints of $P_s$ and $Q_s$ avoid
$N_{H^*}[A_r]$, there is no edge of $H^*$ between these two
subpaths.

It remains to consider edges involving internal vertices of the
chosen $H^*$-paths.
By Lemma~\ref{lem:paritybreaking}, an internal
vertex of one of $P_r,Q_r$ has no neighbor in $H^*$ outside its own
endpoints.
Moreover, there is no edge from an internal vertex of one
of $P_r$ and $Q_r$ to an internal vertex of one of $P_s$ and $Q_s$; otherwise,
by following the latter path from one of its endpoints up to such an
edge, we obtain an $(H^*\cup P_r)$-path or an $(H^*\cup Q_r)$-path
from $H^*$ to an internal vertex of $P_r$ or $Q_r$, contradicting
Lemma~\ref{lem:paritybreaking}.

Thus, $D_r$ and $D_s$ are vertex-disjoint and there is no edge between them.
Therefore, if the procedure chooses~$k$ pairs, then 
$D_1,\ldots,D_k$
form an induced packing of~$k$ even cycles in~$G$.
We may therefore assume that the procedure stops after $t<k$ iterations. 

Let $Z=\bigcup_{j\in[t]}A_j$.
Since $\abs{A_j}\le 4$ for every $j\in[t]$, we have $\abs{Z}\le 4t\le 4(k-1)<4k$.
By the stopping condition, there are no two $H^*$-paths contained in
\[
G_0-N_{H^*}[Z]
=
G^*-\bigcup_{i\in[q]}Y_i-N_{H^*}[Z]
\]
that are crossing or nested with respect to
\[
F_0-N_{H^*}[Z]
=
H^*-\bigcup_{i\in[q]}Y_i-N_{H^*}[Z].
\]
This proves the lemma.
\end{proof}

\subsection{Bridges}\label{subsec:bridge}

Let~$Z$ be the set obtained by applying Lemma~\ref{lem:removingcrossingnested}.
Let 
\[
H^{\dagger}=H^*-\bigcup_{i\in [q]}Y_i-N_{H^*}[Z]
\quad\text{and}\quad
G^{\dagger}=G^*-\bigcup_{i\in [q]}Y_i-N_{H^*}[Z].
\]

We now consider the $H^{\dagger}$-bridges of $G^{\dagger}$.
We first show that every $H^{\dagger}$-bridge in $G^{\dagger}$ has at most two
attachments in $H^{\dagger}$.
We then show that every even cycle of $G^{\dagger}$ is contained in a single $H^{\dagger}$-bridge.
Finally, we greedily consider the $H^{\dagger}$-bridges containing
an even cycle.

\begin{lemma}\label{lem:path-same-component}
Every $H^\dagger$-path in $G^\dagger$ has both endpoints in the same
component of $H^\dagger$.
\end{lemma}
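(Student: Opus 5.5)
The natural approach is by contradiction, using the maximality of the parity-breaking extension $\{(L_j,Y_j)\}_{j\in[q]}$. Suppose some $H^\dagger$-path $Q$ in $G^\dagger$ has its endpoints $x,y$ in distinct components of $H^\dagger$. I will turn $Q$ into a candidate for Rule~\ref{prior1} at step $q+1$.

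First, $Q$ is an $H^*$-path. Its endpoints lie in $V(H^\dagger)\subseteq V(H^*)$. Its internal vertices avoid $V(H^\dagger)$ and lie in $G^\dagger$, which was obtained from $G^*$ by deleting only vertices of $H^*$, namely $\bigcup_i Y_i$ and $N_{H^*}[Z]$. Hence an internal vertex of $Q$ lying in $H^*$ would lie in $H^\dagger$, a contradiction. So $\internal(Q)$ avoids $V(H^*)$. Also $Q$ is not an edge of $H^*$, since $H^\dagger$ is an induced subgraph of $H^*$ and $x,y$ lie in distinct components. Moreover, $Q$ lies in $G^*-\bigcup_{j\in[q]}Y_j$, because $G^\dagger$ is a subgraph of it.

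Second, I compare components. $H^\dagger$ is an induced subgraph of $F_q:=H^*-\bigcup_{j\in[q]}Y_j$, obtained by further deleting $N_{H^*}[Z]$. The key point is that $x$ and $y$ might lie in the same component $R$ of $F_q$, with $xRy$ passing through $N_{H^*}[Z]$. If $x,y$ lie in distinct components of $F_q$, then $Q$ is a valid $L_{q+1}$ under Rule~\ref{prior1}, contradicting maximality and finishing the proof.

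Third, and this is the main obstacle, I must handle the case where $x,y$ lie in one component $R$ of $F_q$. Let $z\in Z$ be such that $xRy$ meets $N_{H^*}[z]$. Then $z$ is an endpoint of one of the greedily chosen pairs $(P_s,Q_s)$ from Lemma~\ref{lem:removingcrossingnested}, and these lie on components of $F_0=F_q$. The plan is to show that $Q$ is then crossing with, or nested with, one of $P_s,Q_s$ with respect to $F_0$. We have $x,y\notin N_{H^*}[A_s]$. By Lemma~\ref{lem:paritybreaking}, $Q$ is vertex-disjoint from $P_s$ and $Q_s$. So a crossing or nesting with non-adjacent endpoints would be a strongly crossing or strongly nested pair, contradicting the termination of the priority procedure, exactly as in Claim~\ref{claim:serial}.

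The subtle case is when $Q$ is serial with both $P_s$ and $Q_s$, yet $xRy$ still passes through $N_{H^*}[A_s]$. Since $I(P_s)$ and $I(Q_s)$ cross or nest, their union is an interval of $R$ whose extreme points are endpoints in $A_s$. If $xRy$ passes near such an endpoint without containing either interval, one endpoint of $Q$ would lie strictly inside $I(P_s)$ or $I(Q_s)$, giving a crossing. If $xRy$ contains the whole union, then $Q$ is nested with $P_s$. Either way we get a contradiction.

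One point is still open: $xRy$ might touch only a neighbor of some $z\in A_s$ that lies just outside the union of the intervals, without containing $z$. I would rule this out by noting that then $x$ or $y$ equals that neighbor, or the subpath would have to pass through $z$ itself. The first option is excluded since $x,y\notin N_{H^*}[Z]$. The second reduces to the containment case above.
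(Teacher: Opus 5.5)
Your proof is correct and follows the paper's argument. Rule~\ref{prior1} handles endpoints in distinct components of $H^*-\bigcup_{i\in[q]}Y_i$. Otherwise, a vertex of $Z$ on $xRy$ gives a path from the greedily chosen pairs that is strongly crossing or strongly nested with $Q$, and vertex-disjoint from it by Lemma~\ref{lem:paritybreaking}, contradicting the termination of Rules~\ref{prior2} and~\ref{prior3}.

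The paper avoids your detour through the ``serial with both'' and ``neighbor just outside the union'' cases. It notes directly that since $x,y\notin N_{H^*}[Z]$ and $R$ is a path, $xRy$ must contain some $z\in Z$ in its interior. Your last paragraph proves exactly this fact.
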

\begin{proof}
Suppose for a contradiction that there is an $H^\dagger$-path~$Q$
in $G^\dagger$ whose endpoints~$x$ and~$y$ lie in distinct components
of $H^\dagger$.
Since $G^\dagger$ is a subgraph of 
$G^*-\bigcup_{i\in[q]}Y_i$,
$Q$ is also an $H^*$-path contained in $G^*-\bigcup_{i\in[q]}Y_i$.

Let
$F=H^*-\bigcup_{i\in[q]}Y_i$.
If~$x$ and~$y$ lie in distinct components of~$F$, then~$Q$ is a path
described in Rule~\ref{prior1}, contradicting the termination of the
parity-breaking extension.

Thus, $x$ and~$y$ lie in the same component~$R$ of~$F$.
Since they lie in distinct components of
$H^\dagger=F-N_{H^*}[Z]$,
the subpath $xRy$ meets $N_{H^*}[Z]$.
As $x,y\notin N_{H^*}[Z]$ and~$R$ is a path, $xRy$ contains a vertex
$z\in Z$ in its interior.

By the construction of~$Z$, the vertex~$z$ is an endpoint of one of
the two $H^*$-paths in some pair selected in
Lemma~\ref{lem:removingcrossingnested}; let~$P$ be this path, and let
$z'$ be its other endpoint.
In particular, $z,z'\in Z$.

If $z'\in V(xRy)$, then
$I_P$ is a subpath of $xRy=I_Q$,
and thus~$P$ and~$Q$ are nested with respect to~$F$.
If $z'\notin V(xRy)$, then~$P$ and~$Q$ are crossing with respect to
$F$.

Moreover, since $x,y\notin N_{H^*}[Z]$, no endpoint of~$Q$ is adjacent
in $H^*$ to an endpoint of~$P$.
Thus, $P$ and~$Q$ are strongly nested
or strongly crossing with respect to~$F$.
By Lemma~\ref{lem:paritybreaking}, $P$ and~$Q$ are vertex-disjoint.
Hence this contradicts the termination of
Rules~\ref{prior2} and~\ref{prior3}.

Therefore, every $H^\dagger$-path in $G^\dagger$ has both endpoints
in the same component of $H^\dagger$.
\end{proof}

\begin{lemma}\label{lem:one-intersection}
Every even cycle of $G^\dagger$ intersects $H^\dagger$ in at
most one vertex. 
\end{lemma}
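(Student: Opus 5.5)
Suppose for a contradiction that $D$ is an even cycle of $G^\dagger$ with $\abs{V(D)\cap V(H^\dagger)}\ge 2$. The plan is to decompose $D$ along $H^\dagger$ and play the parity bookkeeping of $H_m$ against the absence of crossing and nested pairs.

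First, $D$ cannot lie entirely in $H^\dagger$: $H^\dagger$ is an induced subgraph of the linear forest $H^*=H_m-\mathcal S_m$, so it is acyclic. Hence $D$ contains at least one subpath that leaves $H^\dagger$. Cut $D$ at its vertices in $V(H^\dagger)$. This writes $D$ as a cyclic concatenation of maximal segments of two kinds: subpaths lying in $H^\dagger$, and $H^\dagger$-paths $Q_1,\dots,Q_s$ in $G^\dagger$ with $s\ge 1$. (An edge of $G$ between two vertices of $H^\dagger$ that is not in $H^\dagger$ counts as an $H^\dagger$-path; since $H^*$ is induced, no such edge exists.)

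Each $Q_j$ is also an $H^*$-path in $G^*$, so Lemma~\ref{lem:paritybreakingbasic} makes it parity-breaking for $H_m$. By Lemma~\ref{lem:path-same-component}, both endpoints of $Q_j$ lie in one component $R_j$ of $H^\dagger$, so the interval $I_{Q_j}\subseteq R_j$ is defined.

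Now consider the arrangement of the intervals. If $s=1$, then $D$ minus $\internal(Q_1)$ is a path in the forest $H^\dagger$ joining the endpoints of $Q_1$, so it must be $I_{Q_1}$. Then $D=Q_1\cup I_{Q_1}$. Since $Q_1$ is parity-breaking and $I_{Q_1}$ respects the bipartition $\beta$ of $H_m$, this cycle is odd, a contradiction.

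So $s\ge 2$. Distinct $Q_j$ on $D$ are internally disjoint. By Lemma~\ref{lem:removingcrossingnested}, no two of them are crossing or nested with respect to $H^\dagger$. So any two whose endpoints lie on the same component are serial, meaning their intervals are disjoint.

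Next I count parity. Going around $D$, each $H^\dagger$-segment has length congruent mod $2$ to the $\beta$-difference of its ends, and each $Q_j$ has length congruent to the $\beta$-difference of its ends plus $1$. Since $D$ is even, $s$ must be even. Contract each component of $H^\dagger$ to a point and view the $Q_j$ as links, as in the path-matching setting. All links are loops by Lemma~\ref{lem:path-same-component}, and $D$ traverses the components in some cyclic order. Any two consecutive link endpoints on $D$ are joined either by a segment of $H^\dagger$, which lies in one component, or by a link, which also stays in one component. Hence all of $D\cap H^\dagger$ lies in a single component $R$.

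Within $R$, $D$ alternates between subpaths of $R$ and links whose intervals are pairwise disjoint. A closed walk on the path $R$ formed by disjoint subpaths $A_1,\dots,A_s$ of $R$ alternating with jumps $Q_j$ must reuse a vertex of $R$. To see this, order the intervals $I_{Q_j}$ left to right along $R$. The segment of $D$ leaving the right end of the leftmost interval $I_{Q_1}$ must travel along $R$ and eventually return to the left end of $I_{Q_1}$. That forces it to pass over $I_{Q_1}$'s region again, or to use another link whose interval contains or crosses $I_{Q_1}$. Either way we get a repeated vertex, contradicting that $D$ is a cycle, or a crossing or nested pair, contradicting Lemma~\ref{lem:removingcrossingnested}.

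The main obstacle is making this last planar-ordering argument rigorous: showing that serial links together with subpaths of a single path cannot close into a cycle unless $s=1$. I would formalize it by taking the link $Q_j$ whose interval has the leftmost endpoint and arguing directly that $D-\internal(Q_j)$ restricted to $R$ must contain $I_{Q_j}$. This reduces to the $s=1$ parity contradiction, or else some other link crosses or nests $Q_j$.
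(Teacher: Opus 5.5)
Your proposal is correct and follows the paper's proof. All of $D\cap H^\dagger$ lies in one component $R$ by Lemma~\ref{lem:path-same-component}. A single $H^\dagger$-path gives the odd cycle $Q_1\cup I_{Q_1}$, and two or more such paths force a crossing or nested pair, contradicting Lemma~\ref{lem:removingcrossingnested}.

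Your leftmost-endpoint argument correctly supplies the step the paper compresses into ``since $U$ is a cycle.'' Following $D$ from the leftmost link endpoint $x$ along $R$, it must either reach the other end of $I_{Q_j}$, stop at an interior point of $I_{Q_j}$, or run past that end, which gives degree three there. This argument also handles two links sharing an endpoint. Your intermediate claim that non-crossing, non-nested intervals are disjoint misses that case, but the claim is not needed.

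The parity count showing that $s$ is even is correct but unnecessary.
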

\begin{proof}
Suppose for a contradiction that~$U$ is an even cycle of $G^\dagger$
such that
$\abs{V(U)\cap V(H^\dagger)}\ge 2$.

Let $\mathcal P$ be the family of maximal subpaths of~$U$ whose
endpoints are in $H^\dagger$ and whose internal vertices are outside
$H^\dagger$.
By Lemma~\ref{lem:path-same-component}, every $H^\dagger$-path in $G^\dagger$ has both endpoints in the
same component of $H^\dagger$.
Thus all vertices of
$V(U)\cap V(H^\dagger)$ lie in the same component, say~$R$, of $H^\dagger$.

For each $P\in\mathcal P$, let $I_P$ denote the unique subpath of~$R$
joining the endpoints of~$P$.

Suppose first that $\abs{\mathcal P}=1$, say $\mathcal P=\{P\}$.
Then
$U=P\cup I_P$.
Since every $H^*$-path in $G^*$ is parity-breaking for $H_m$, the cycle $P\cup I_P$ is odd, a contradiction.

Thus, $\abs{\mathcal P}\ge2$.
Since~$U$ is a cycle, there exist two paths~$P$ and $Q\in\mathcal P$ such
that $I_P$ and $I_Q$ are either crossing or one is contained in the
other.
Moreover, $P$ and~$Q$ are internally disjoint, as they are distinct
maximal subpaths of the cycle~$U$.

Hence, $P$ and~$Q$ are two internally disjoint $H^*$-paths in
$G^\dagger$ that are crossing or nested with respect to
$H^\dagger$, contradicting Lemma~\ref{lem:removingcrossingnested}.
Therefore,
we have $\abs{V(U)\cap V(H^\dagger)}\le 1$.
\end{proof}

\begin{lemma}\label{lem:singlebridge}
    Every even cycle of $G^\dagger$ is contained in a single
$H^\dagger$-bridge~$F$, where $N_{G^{\dagger}}[V(F)\cap V(H^{\dagger})]$ meets every even cycle contained in~$F$.
\end{lemma}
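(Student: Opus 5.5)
Fix an even cycle $U$ of $G^\dagger$. By Lemma~\ref{lem:one-intersection}, $\abs{V(U)\cap V(H^\dagger)}\le 1$, so $U-V(H^\dagger)$ is either all of $U$ or a path obtained by deleting one vertex. In both cases it is connected and non-empty, since $U$ has at least six vertices because $G$ is $C_4$-subgraph-free. Hence $U-V(H^\dagger)$ lies in a single component $K$ of $G^\dagger-V(H^\dagger)$. Every edge of $U$ either lies in $K$ or joins $K$ to the (at most one) vertex of $U$ in $H^\dagger$, so $U$ is contained in the $H^\dagger$-bridge $F$ corresponding to $K$. This settles the first part.

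For the second part, let $F$ be an $H^\dagger$-bridge and $U$ an even cycle in $F$, and write $A=V(F)\cap V(H^\dagger)$ for the attachments. I first show $\abs{A}\le 2$. Suppose $F$ has three distinct attachments $a_1,a_2,a_3$. Any two of them are joined by an $H^\dagger$-path through $K$, so by Lemma~\ref{lem:path-same-component} they lie in a common component $R$ of $H^\dagger$, which is a path. Order them $a_1<_R a_2<_R a_3$. In the connected subgraph $K$, choose a vertex $c$ from which there are three internally disjoint paths to $a_1,a_2,a_3$ (a tripod), or fall back to the degenerate configuration. Combining two of these routes with $R$ gives two internally disjoint $H^\dagger$-paths that are nested or crossing with respect to $H^\dagger$. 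Concretely, I would take an $(a_1,a_3)$-path $P$ through $K$ together with a path $Q$ from $a_2$ meeting $P$ at an internal vertex. By Lemma~\ref{lem:paritybreaking}, a path from $a_2\in V(H^*)\setminus V(P)$ to an internal vertex of $P$ cannot exist inside $G^*$. This gives a contradiction, so $\abs{A}\le 2$.

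With this established, suppose $U$ avoids $N_{G^\dagger}[A]$. Then $U$ lies in $K-N_{G^\dagger}(A)$, and $U$ has no neighbour in $H^\dagger$ at all, because every edge from $K$ to $H^\dagger$ ends at a vertex of $A$. I then want to use the assumption that $G-N_G[C]$ has no even cycle. For this, $U$ must have a neighbour in $C$ or meet $N_G[C]$; since $C\subseteq H_m$, I have to trace where such a vertex could sit relative to $H^\dagger$. Vertices of $C$ outside $H^\dagger$ were either deleted as part of $\mathcal S_m$, $Y_i$ or $N_{H^*}[Z]$, or else they are removed neighbourhoods that are not in $G^\dagger$. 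The intended conclusion is that any even cycle inside $K$ avoiding $N[A]$ would already have been caught as an extension or a parity-breaking path, which contradicts maximality. More precisely, I would take a shortest path from $U$ to $H^\dagger$ inside $F$, which must end at some $a\in A$, and combine it with $U$. Together with the parity-breaking structure, this should yield either a second attachment-based $\theta$ configuration or show that $U$ meets $N[a]$.

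I expect this last step to be the main obstacle. The cleanest route may instead be to observe that the paper's intended statement says $N_{G^\dagger}[A]$ meets every even cycle of $F$ \emph{when} $F$ is treated in the final greedy step, i.e.\ cycles disjoint from $N[A]$ lie in $G-N_G[C]$ modulo the deleted sets. I would try first to prove that $K\cap N_G[C]\subseteq N_{G^\dagger}[A]$ using that every vertex of $C$ remaining in $G^\dagger$ belongs to $H^\dagger$.
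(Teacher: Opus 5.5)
Your first part is correct and is the paper's opening step. By Lemma~\ref{lem:one-intersection}, $U-V(H^\dagger)$ is connected and lies in one component $K$ of $G^\dagger-V(H^\dagger)$, so $U$ sits in the corresponding bridge. The bound $\abs{A}\le 2$ is not needed here. It is Lemma~\ref{lem:bridge-attachments}, proved separately by essentially the $(H^*\cup P)$-path argument you sketch, and it contributes nothing toward showing that $N_{G^\dagger}[A]$ meets $U$.

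The genuine gap is that this second assertion is never proved. Appeals to maximality cannot work. An even cycle hanging off the structure through a single vertex produces no $H_m$-extension and no $H^*$-path, so nothing is contradicted. Likewise, a shortest path from $U$ to $A$ gives nothing, since $U$ can be far from $A$, or $A$ can be empty. The only lever is the hypothesis that $G-N_G[C]$ has no even cycle, used as in your final sentence. The paper argues as follows. Since $V(G^\dagger)\cap V(H_m)=V(H^\dagger)$, a cycle $U\subseteq K$ avoids $C$ and so has a neighbour in $C$. The paper then asserts that no vertex of $G^\dagger$ is adjacent to a vertex of $C-V(H^\dagger)$. Hence this neighbour lies in $H^\dagger$, and therefore in $A$, contradicting $V(U)\cap N_{G^\dagger}[A]=\emptyset$.

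That adjacency assertion is exactly what your proposed inclusion $V(K)\cap N_G[C]\subseteq N_{G^\dagger}[A]$ requires. It is justified only for vertices of $C$ in $\mathcal S_m$, whose neighbours outside $H_m$ are all deleted when forming $G^*$. The sets $Y_i$ and $N_{H^*}[Z]$ consist of vertices of $H^*$ only, so their neighbours outside $H_m$ survive in $G^\dagger$.

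Here is an example. Take $C=x_0x_1\cdots x_{11}x_0$ with chord $x_0x_6$. Add a path $L$ of length $7$ from $x_3$ to $x_9$ with new internal vertices, and an even cycle $D$ of length $12$ joined to $x_2$ by one edge. Then:
\begin{itemize}
\item $m=1$ and $\mathcal S_m=\{x_0,x_6\}$;
\item $L$ is chosen by Rule~\ref{prior1}, giving $Y_1=\{x_2,x_3,x_4,x_8,x_9,x_{10}\}$, and nothing else is chosen, so $Z=\emptyset$;
\item $D$ is an entire $H^\dagger$-bridge with no attachments, so $N_{G^\dagger}[\emptyset]$ misses it.
\end{itemize}

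So the step you flagged cannot be closed for the lemma as stated. One must also delete the neighbours outside $H_m$ of $\bigcup_i Y_i\cup N_{H^*}[Z]$, or equivalently put these $O(k\log k)$ vertices of $H^*$ into the final hitting set. After that, the argument above goes through.
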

\begin{proof}
    After deleting the possible unique vertex of the cycle in
$H^\dagger$, the remaining part of the cycle is connected and hence
contained in a single component of
$G^\dagger-V(H^\dagger)$.

Moreover, since $G-N_G[V(C)]$ contains no even cycle, every even cycle
$D$ contained in $G^\dagger-V(H^\dagger)$ has a neighbor in~$C$.
Since no vertex of $G^\dagger$ is adjacent to a vertex of
$C-V(H^\dagger)$, such a neighbor must belong to $H^\dagger$.
Therefore, the component of $G^\dagger-V(H^\dagger)$ containing~$D$,
together with its edges to $H^\dagger$, forms an $H^\dagger$-bridge
containing~$D$.

Let
$A(F)=V(F)\cap V(H^\dagger)$.
We claim that
$N_{G^\dagger}[A(F)]$
meets every even cycle contained in~$F$.

Suppose for a contradiction that~$F$ contains an even cycle~$D$
disjoint from $N_{G^\dagger}[A(F)]$. 
As observed above, every even cycle contained in
$G^\dagger-V(H^\dagger)$ has a neighbor in $H^\dagger$.
Thus, some vertex of~$D$ has a neighbor in $H^\dagger$.
Since~$D$ is contained in the $H^\dagger$-bridge~$F$, every such
neighbor in $H^\dagger$ belongs to $A(F)$.
Therefore,
$V(D)\cap N_{G^\dagger}[A(F)]\neq\emptyset$,
a contradiction.
\end{proof}

\begin{lemma}\label{lem:bridge-attachments}
Every $H^\dagger$-bridge of $G^\dagger$ has at most two attachments in $H^\dagger$.
\end{lemma}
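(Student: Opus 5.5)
We argue by contradiction and use the absence of crossing or nested pairs of $H^\dagger$-paths guaranteed by Lemma~\ref{lem:removingcrossingnested}, together with Lemma~\ref{lem:path-same-component}. Suppose some $H^\dagger$-bridge~$F$ of $G^\dagger$ has three distinct attachments $x_1,x_2,x_3$. By Lemma~\ref{lem:path-same-component}, any two attachments are joined by an $H^\dagger$-path through the connected interior $K$ of~$F$. Hence all attachments lie in a single component~$R$ of $H^\dagger$, which is a path since $H^*$ is a linear forest. We order them along~$R$ so that $x_1<_R x_2<_R x_3$.

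Next we build two internally disjoint $H^\dagger$-paths inside~$F$ that are nested or crossing with respect to $H^\dagger$. The natural choice is a tripod. Since $K$ is connected and each $x_i$ has a neighbour in~$K$, there is a subtree of $F$ whose leaves are exactly $x_1,x_2,x_3$. This subtree is either a subdivided claw with centre $c\in V(K)$, or a path through one attachment. The second case cannot occur, because internal vertices of the subtree lie in $K$, outside $H^\dagger$. So we have three internally disjoint paths $T_1,T_2,T_3$ from $c$ to $x_1,x_2,x_3$. Set $P=T_1\cup T_3$, an $H^\dagger$-path from $x_1$ to $x_3$ with $I_P=x_1Rx_3$, and $Q=T_2\cup T_3$ (or $T_1\cup T_2$).

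These two paths share the segment $T_3$, so they are not internally disjoint. We repair this with parity, which is the essential point. By Lemma~\ref{lem:paritybreakingbasic}, every $H^*$-path in $G^*$ is parity-breaking for $H_m$, and each of $T_1\cup T_2$, $T_1\cup T_3$, $T_2\cup T_3$ is such a path. Let $\beta$ be a bipartition indicator and let $\ell_i=|E(T_i)|$. Parity-breaking gives
\[
\ell_i+\ell_j\not\equiv \beta(x_i)+\beta(x_j)\pmod 2
\]
for all three pairs $\{i,j\}$. Write $a_i=\ell_i+\beta(x_i)$. The three conditions say $a_1\ne a_2$, $a_1\ne a_3$ and $a_2\ne a_3$ modulo~$2$. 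This is impossible for three values in $\{0,1\}$.

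The contradiction comes directly from parity, so the crossing/nesting machinery is not even needed. Note that this argument also never uses the subtree-case analysis beyond producing the claw. The main point to verify carefully is that each $T_i\cup T_j$ really is an $H^*$-path in $G^*$. It is a path because the $T_i$ are internally disjoint and meet only at $c$. Its endpoints lie in $H^\dagger\subseteq H^*$. Its interior lies in $K\subseteq V(G^\dagger)\setminus V(H^\dagger)$. We also need that interior to avoid $V(H^*)\setminus V(H^\dagger)$. Vertices of $H^*$ removed via $Y_i$ or $N_{H^*}[Z]$ are deleted from $G^\dagger$ entirely, since those deletions are applied to $G^*$ as well. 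So the interior avoids $V(H^*)$, and Lemma~\ref{lem:paritybreakingbasic} applies. It then remains only to build the claw, a standard argument: take a path in $K$ between neighbours of $x_1$ and $x_2$, then a shortest path in $K$ from a neighbour of $x_3$ to it. If that path meets the first one at an end-vertex, we use that vertex as the centre.
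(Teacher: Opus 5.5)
Your proof is correct and is essentially the paper's argument. The paper builds the same tripod: a path $P$ between two attachments, plus a path $R$ from the third attachment to an internal vertex of $P$. It then appeals to Lemma~\ref{lem:paritybreaking}, whose proof is exactly your parity count via Lemma~\ref{lem:paritybreakingbasic}: the three paths $T_i\cup T_j$ cannot all be parity-breaking.

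You correctly handle the detail that needs checking, namely that each interior avoids $V(H^*)$ because $V(G^\dagger)\cap V(H^*)=V(H^\dagger)$. Your explicit construction of the claw inside $K$ plus three edges is the right way to secure this, since an arbitrary minimal subtree of $F$ could pass through a fourth attachment.
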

\begin{proof}
Suppose for a contradiction that there is an $H^\dagger$-bridge~$B$
of $G^\dagger$ having at least three distinct attachments
$a,b,c\in V(H^\dagger)$.

 Let~$K$ be the component of
$G^\dagger-V(H^\dagger)$ corresponding to~$B$.
Choose neighbors $a',b',c'\in V(K)$ of $a,b,c$, respectively.
Since~$K$ is connected, there is a tree~$T$ contained in
$K+\{aa',bb',cc'\}$
that connects $a,b,c$ and is minimal with this property.
In particular, $a,b,c$ are leaves of~$T$.

Let~$P$ be the unique path from~$a$ to~$b$ in~$T$.
Since
$V(G^\dagger)\setminus V(H^\dagger)
   \subseteq V(G^*)\setminus V(H^*)$,
$P$ is an $H^*$-path contained in $G^*$.

Let~$R$ be the path in~$T$ from~$c$ to the first vertex~$w$ of~$P$
met when traversing~$T$ from~$c$ towards~$P$.
Since $a,b,c$ are leaves of~$T$, the vertex~$w$ is an internal vertex
of~$P$.
Moreover,
$c\in V(H^*)\setminus V(P)$,
and all internal vertices of~$R$ lie outside $H^*\cup P$.
Hence~$R$ is an $(H^*\cup P)$-path in $G^*$ with one endpoint in
$V(H^*)\setminus V(P)$ and the other endpoint in $\internal(P)$.
This contradicts Lemma~\ref{lem:paritybreaking}.

Therefore, every $H^\dagger$-bridge of $G^\dagger$ has at most two
attachments in $H^\dagger$.
\end{proof}

\begin{lemma}\label{lem:final-greedy}
Either~$G$ contains an induced packing of~$k$ even cycles, or there
exists a set $W\subseteq V(H^\dagger)$ of size at most $2(k-1)$ such
that
$G^\dagger-N_{G^\dagger}[W]$
contains no even cycle.
\end{lemma}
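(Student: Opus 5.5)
We will run a greedy procedure over the $H^\dagger$-bridges of $G^\dagger$, using Lemmas~\ref{lem:singlebridge} and~\ref{lem:bridge-attachments}. By Lemma~\ref{lem:singlebridge}, every even cycle of $G^\dagger$ lies in a single $H^\dagger$-bridge $F$. Moreover, $N_{G^\dagger}[A(F)]$ meets every even cycle in $F$, where $A(F)=V(F)\cap V(H^\dagger)$ has size at most~$2$ by Lemma~\ref{lem:bridge-attachments}. Call a bridge \emph{active} if it contains an even cycle.

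The greedy step is as follows. Set $W_0=\emptyset$. At step $s$, look for an even cycle $D_s$ in $G^\dagger-N_{G^\dagger}[W_{s-1}]$; such a cycle can be found as a shortest even cycle by Theorem~\ref{thm:alg-evencycle}. If none exists, stop and output $W=W_{s-1}$. Otherwise, let $F_s$ be the bridge containing $D_s$, and set $W_s=W_{s-1}\cup A(F_s)$. Since $|A(F_s)|\le 2$, we get $|W_s|\le 2s$. If the process runs for $k$ steps, we aim to show that $D_1,\dots,D_k$ is an induced packing. Otherwise it stops after $t\le k-1$ steps, and then $|W|\le 2(k-1)$ and $G^\dagger-N_{G^\dagger}[W]$ has no even cycle, as required.

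The main obstacle is showing that the cycles $D_r$ and $D_s$, with $r<s$, are vertex-disjoint with no edges between them. First, $D_s$ avoids $N_{G^\dagger}[A(F_r)]$. Second, $D_s$ cannot lie in $F_r$, since $N[A(F_r)]$ meets every even cycle of $F_r$; so $F_s\neq F_r$. Distinct bridges share only attachment vertices, and the interior components are different components of $G^\dagger-V(H^\dagger)$, so there are no edges between their interiors. Any common vertex or edge must therefore involve $V(H^\dagger)$.

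By Lemma~\ref{lem:one-intersection}, $D_r$ meets $H^\dagger$ in at most one vertex, and that vertex is an attachment of $F_r$, hence in $A(F_r)$. So $D_s$ avoids $V(D_r)\cap V(H^\dagger)$ together with its neighbours. It remains to rule out an edge from a vertex of $D_r$ outside $H^\dagger$ to a vertex of $D_s$. Such an edge cannot go to the interior of $F_s$. It also cannot go to a vertex of $D_s$ in $H^\dagger$: that vertex would then be an attachment of $F_r$, so it would lie in $A(F_r)\subseteq W_r$, but $D_s$ avoids $N[W_r]$.

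Hence the cycles $D_1,\dots,D_k$ are pairwise disjoint and anticomplete. They are even cycles of $G^\dagger\subseteq G$, so they form an induced packing of $k$ even cycles in $G$. The subtle points to check carefully are that the attachments of $F_r$ are exactly the vertices of $H^\dagger$ adjacent to the interior of $F_r$, and that edges of $G$ between $D_r$ and $D_s$ are edges of $G^\dagger$ because $G^\dagger$ is an induced subgraph of $G$.
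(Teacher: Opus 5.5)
Your proposal is correct and follows essentially the same route as the paper's proof. Both run the same greedy procedure: pick an even cycle $D_s$ avoiding $N_{G^\dagger}[W_{s-1}]$, and add to $W$ the at most two attachments of the bridge containing it. Both then establish disjointness and anticompleteness the same way, by showing $F_s\neq F_r$ via Lemma~\ref{lem:singlebridge}, noting that the interiors of distinct bridges lie in different components, and observing that any vertex of $H^\dagger$ adjacent to the interior of $F_r$ is an attachment and therefore lies in $W_r$.
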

\begin{proof}
We greedily construct even cycles $D_1,D_2,\ldots$ and
$H^\dagger$-bridges $F_1,F_2,\ldots$ as follows.

Suppose that $D_1,\ldots,D_{i-1}$ and
$F_1,\ldots,F_{i-1}$ have been chosen.
For each $j\in[i-1]$, let
$A_j=V(F_j)\cap V(H^\dagger)$,
and $W_{i-1}=\bigcup_{j\in[i-1]}A_j$.
If
$G^\dagger-N_{G^\dagger}[W_{i-1}]$
contains an even cycle, choose one and call it $D_i$.
By Lemma~\ref{lem:one-intersection} and the discussion following it,
$D_i$ is contained in a single $H^\dagger$-bridge; let $F_i$ be such
a bridge.
If no such even cycle exists, we stop the procedure.

We first claim that the cycles chosen by this procedure form an
induced packing.
Let $1\le r<s$.
Since
$N_{G^\dagger}[A_r]\subseteq
N_{G^\dagger}[W_{s-1}]$,
we have
$V(D_s)\cap N_{G^\dagger}[A_r]=\emptyset$.
Moreover, $F_r$ and $F_s$ are distinct.
Indeed,
$N_{G^\dagger}[A_r]$ meets every even cycle contained in $F_r$,
whereas $D_s$ is disjoint from this set.

The interiors of two distinct $H^\dagger$-bridges lie in distinct
components of $G^\dagger-V(H^\dagger)$, and hence there is no edge
between them.
Also,
$V(D_r)\cap V(H^\dagger)\subseteq A_r$.
Thus, no vertex of $D_s$ is equal or adjacent to a vertex of
$D_r\cap V(H^\dagger)$.

Conversely, suppose that a vertex
$x\in V(D_r)\setminus V(H^\dagger)$ is adjacent to a vertex
$y\in V(D_s)\cap V(H^\dagger)$.
Then~$y$ is an attachment of $F_r$,
so $y\in A_r$.
This is a contradiction.
Therefore, $D_r$ and $D_s$ are
vertex-disjoint and there is no edge between them.

It follows that, if the procedure chooses~$k$ cycles, then
$D_1,\ldots,D_k$ form an induced packing of~$k$ even cycles.
We may therefore assume that the procedure stops after $t<k$
iterations. 

Let
$W=\bigcup_{i\in[t]}A_i$.
By Lemma~\ref{lem:bridge-attachments}, each $H^\dagger$-bridge has at
most two attachments.
Therefore,
\[
|W|
   \le \sum_{i\in[t]}|A_i|
   \le 2t
   \le 2(k-1).
\]
By the stopping condition,
$G^\dagger-N_{G^\dagger}[W]$
contains no even cycle.
This proves the lemma.
\end{proof}

\subsection{Proof of Theorem~\ref{thm:main2}}\label{subsec:mainproof}

Now, we complete the proof of Theorem~\ref{thm:main2}.

\begin{proof}[Proof of Theorem~\ref{thm:main2}]
Suppose first that $\abs{V(C)}=4$.
Then, since
$G-N_G[C]$
contains no even cycle, we may simply take $X=V(C)$.
Thus, we may
assume that $\abs{V(C)}\ge 6$.
Since~$C$ is a shortest even cycle, $G$
is $C_4$-subgraph-free.

Let $\{(P_i,M_i,J_i,T_i,S_i):i\in[m]\}$ be a  maximal bipartite ear decomposition in~$G$.
By Lemma~\ref{lem:applyingskeleton},
  if $m\ge 104k\log k+132k+1$, then~$G$ contains an induced packing of~$k$ even cycles. 
  Thus, we may assume that $m<104k\log k+132k+1$.

 Let $\mathcal S_m$,
 $H^*$, and $G^*$ be defined as before.
Let $m'$ be the number of components of $H^*$.
Since $m=O(k\log k)$, we have $m'=O(k\log k)$.
 
We now consider the parity-breaking extension $\{(L_j,Y_j)\}_{j\in[q]}$, and let $\mathcal L$ be the family of all
components of the graphs in $\{L_i:i\in[q]\}$.
Let
$r=|\mathcal L|$.
By Lemma~\ref{lem:consecutiveserial}, the path-matching graph $\aux(H^*,\mathcal L)$
satisfies the consecutive serial property.

Recall that $g_\theta(k)=O(k\log k)$ denotes the Erd\H{o}s--P\'osa
gap for $\theta$-minor models.
By Lemma~\ref{lem:caselargeq},
if
$r>11m'+23g_\theta(k)$,
then~$G$ contains an induced packing of~$k$ even cycles.
Thus, we may assume that
$r\le 11m'+23g_\theta(k)=O(k\log k)$.
Let $A=\bigcup_{L\in\mathcal L}V_1(L)$.
Since every member of $\mathcal L$ is a path,
$\abs{A}\le 2r=O(k\log k)$.
By the definition of the sets $Y_i$,
$\bigcup_{i\in[q]}Y_i=N_{H^*}[A]$.

We next apply Lemma~\ref{lem:removingcrossingnested}.
If it yields an
induced packing of~$k$ even cycles, we are done.
Otherwise, there is
a set
$Z\subseteq V(H^*)$ of size at most $4k$
such that, defining
\[
H^\dagger
 =
 H^*-\bigcup_{i\in[q]}Y_i-N_{H^*}[Z]
\quad\text{and}\quad
G^\dagger
 =
 G^*-\bigcup_{i\in[q]}Y_i-N_{H^*}[Z],
\]
there are no two internally disjoint $H^*$-paths in $G^\dagger$
that are crossing or nested with respect to $H^\dagger$.
Finally, by Lemma~\ref{lem:final-greedy}, either~$G$ contains an induced packing of~$k$ even cycles, or there
exists a set $W\subseteq V(H^\dagger)$ of size at most $2(k-1)$ such
that
$G^\dagger-N_{G^\dagger}[W]$
contains no even cycle.

Let
$X=\mathcal S_m\cup A\cup Z\cup W$.
Then we have 
\[
\begin{aligned}
|X|
&\le |\mathcal S_m|+|A|+|Z|+|W|\\
&=O(k\log k)+O(k\log k)+4k+2(k-1)=O(k\log k).
\end{aligned}
\]
By the construction, $G-N_G[X]$ has no even cycles, as desired.
\end{proof}

We explain how to find one of the two outcomes in polynomial time.
We use the following algorithm.

\begin{theorem}[Derigs~\cite{Derigs1985}]\label{thm:alg-paritypath}
    Given a graph~$G$ and $a,b\in V(G)$, one can find a shortest odd $(a,b)$-path in time $O(|V(G)|^2)$ if one exists.
Similarly, one can find a shortest  even $(a,b)$-path in time $O(|V(G)|^2)$ if one exists.
\end{theorem}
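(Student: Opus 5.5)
The approach is the standard parity-doubling construction. Given $G$ and $a,b$, I build the auxiliary graph $G'$ on vertex set $V(G)\times\{0,1\}$. For every edge $uv\in E(G)$ it has the two edges $(u,0)(v,1)$ and $(u,1)(v,0)$. A walk in $G'$ from $(a,0)$ to $(b,1)$ projects to an odd $(a,b)$-walk in $G$, and a walk to $(b,0)$ projects to an even walk. The difficulty is that a shortest odd walk need not be a path, since vertices may repeat. So plain BFS in $G'$ does not suffice, and this is exactly why Derigs' result is nontrivial.

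To obtain genuine paths, I follow the classical reduction of shortest parity paths to minimum-weight perfect matching, which is the route of Derigs~\cite{Derigs1985}. For the odd case I form the graph $\widetilde G$ consisting of two copies $G^1,G^2$ of $G-\{a,b\}$ together with single copies of $a$ and $b$. Each vertex $v\notin\{a,b\}$ is joined to its twin by a weight-$0$ edge. Edges of $G$ inside $G-\{a,b\}$ are copied into both $G^1$ and $G^2$, each with weight $1/2$. Edges incident to $a$ go to $G^1$ with weight $1$, and edges incident to $b$ go to $G^2$ with weight $1$; mixing the copies as appropriate encodes the parity.

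The key step is to check that minimum-weight perfect matchings correspond to a shortest path of the prescribed parity plus zero-weight twin edges. Any extra alternating structure would form even cycles of nonnegative weight, and these can be discarded. The positive-length cycles can be removed without changing the parity of the $a$--$b$ component.

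Since a minimum-weight perfect matching can be computed in polynomial time by Edmonds' blossom algorithm, this gives polynomial time. The claimed $O(|V(G)|^2)$ bound, however, requires Derigs' specialized implementation, which exploits the structure of the twin graph to run a Dijkstra-like blossom search from $a$. The even case is symmetric, obtained by routing the edges at $b$ into the same copy as those at $a$.

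I expect the main obstacle to be the correctness of the matching encoding. In particular, one must ensure that the path in $G$ extracted from the matching is a simple path of the right parity. This relies on nonnegative weights, so that no negative alternating cycle can exist. Since the statement is cited from the literature, in the paper I would simply invoke~\cite{Derigs1985} rather than reprove the quadratic running time.
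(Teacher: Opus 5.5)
The paper does not prove this statement. It quotes Derigs~\cite{Derigs1985} and uses the result as a black box, so your decision to invoke that reference for the $O(|V(G)|^2)$ bound is exactly what the paper does. The matching reduction you sketch along the way, however, has its parity encoding reversed.

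Let $Z$ be the set of weight-$0$ twin edges and $M$ a perfect matching of $\widetilde G$. Then $M\triangle Z$ consists of cycles together with one $(a,b)$-path whose edges alternate between $M$ and $Z$, starting and ending with an edge of $M$. With $a$ attached only to $G^1$, this path runs $a, v^1, v^2, \dots$. Each later non-twin $M$-edge stays inside one copy, and each following twin edge switches copies. So after $j$ such edges the path is in $G^2$ exactly when $j$ is even. Since $b$ is attached only to $G^2$, the projected $(a,b)$-path has length $j+2$, which is even.

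Concretely, if $G$ is the path $axb$, then $\{ax^1,x^2b\}$ is a perfect matching of $\widetilde G$. If $G$ is the path $axyb$, then $\widetilde G$ has no perfect matching at all: $a$ must take $x^1$, $b$ must take $y^2$, and $x^2,y^1$ are non-adjacent. So the construction you label ``odd'' computes shortest even paths. Attaching $a$ and $b$ to the same copy forces $j$ to be odd and yields the odd paths of length at least $3$. The edge $ab$, which you leave unspecified, must be added in the odd case and excluded in the even case. Swapping your two constructions repairs the sketch.

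A smaller point: with weight $\tfrac12$ on edges inside $G^1,G^2$ and weight $1$ on edges at $a,b$, a matching encoding a path of length $L\ge 2$ has weight $1+L/2$ rather than $L$. This is harmless, because the weight is monotone in $L$ and nontrivial alternating cycles still have positive weight. Uniform weight $1$ is the natural choice, though. Finally, the reduction combined with Edmonds' algorithm only gives polynomial time, so the quadratic bound rests entirely on the citation, exactly as in the paper.
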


\medskip \noindent
\textbf{Bipartite ear decompositions.}
We claim that a maximal bipartite ear decomposition \[\{(P_i, M_i, J_i, T_i, S_i) \}_{i\in [m]}\] can be constructed in polynomial time.
To find a shortest path among all $H_i$-extensions contained in $G-N_{G-(V(H_i)\setminus \mathcal{S}_i)}[\mathcal{S}_i]$, we guess two vertices~$a$ and~$b$ in $H_i-\mathcal{S}_i$.
If~$a$ and~$b$ are in the same bipartition of $H_i$, then we find a shortest even $(a,b)$-path in 
\[(G-E(H_i))-N_{G-(V(H_i)\setminus \mathcal{S}_i)}[\mathcal{S}_i]-V(H_i)\setminus \{a,b\}\]
using Theorem~\ref{thm:alg-paritypath}.
If they are in the different bipartition, then we find a shortest odd $(a,b)$-path.
Among all such paths, we choose one of minimum length.
Thus, each $P_{i+1}$ can be found in time $O(|V(G)|^2\cdot |V(G)|^2)=O(|V(G)|^4)$.
The vertex cover $T_{i+1}$ of $J_{i+1}$ can be constructed by taking all vertices of degree at least~$2$ in $J_{i+1}$, together with all vertices of degree~$1$ whose unique neighbor also has degree~$1$.
This can be done in time $O(|V(G)|^2)$.
As $m\le |V(G)|$, the maximal bipartite ear decomposition can be constructed in time $O(|V(G)|^5)$.

During the construction, we keep track of the non-matching indices.
Let~$I$ denote the set of non-matching indices.
Assume $\abs{I}\ge 11k$.
By taking a subset if necessary, we may assume that $\abs{I}=11k$.
Then following the proof of Lemma~\ref{lem:nonmatching}, we choose an even cycle $D_i$ for each $i\in I$, and construct an auxiliary graph $\Gamma$.
This can be done in time $O(k|V(G)|^2)$.
Since $\Gamma$ is $10$-degenerate, we can greedily find an independent set $I'$ of $\Gamma$ of size~$k$.
Then $\{D_i:i\in I'\}$ is an induced packing of~$k$ even cycles.
This can be done in time $O(k^2)$.
Thus, we can find an induced packing of~$k$ even cycles in time $O(k|V(G)|^2+k^2)$.

So, we may assume that $\abs{I}<11k$.
By Lemma~\ref{lem:applyingskeleton},  if $m\ge 104k\log k+132k+1$, then 
     then the graph obtained from $H_m-\bigcup_{i\in I}T_i$ by
     iteratively deleting vertices of degree at most~$1$ is a semi-skeleton with at least $208k\log k$ branching vertices.
By applying Lemma~\ref{lem:manybranching} to the skeleton completion of it, $G$ contains an induced packing of~$k$ even cycles.
To find such a packing in polynomial time, it remains to give an algorithmic version of Lemma~\ref{lem:manybranching}.

    Observe that, in the proof of Lemma~\ref{lem:manybranching}, the quantity
    $\abs{F_0(H)\cup F_1(H)}$
    is at most $\abs{V(G)}$, since the chords of a skeleton form a matching.
If every corridor of~$H$ meets at most~$3$ chords in $F_0(H)$, then we delete a shortest cycle~$C$, obtain a semi-skeleton, take a skeleton completion, and apply induction.
This can be done in time $O(|V(G)|^2)$.
In the case when there is a corridor meeting at least~$4$ chords in $F_0(H)$, we take the graph $\phi(H,R)$ and apply the induction.
This can be done in time $O(|V(G)|^2)$.
    Thus, applying Lemma~\ref{lem:manybranching} takes time $O((k+|V(G)|)\cdot |V(G)|^2)$.
     
\medskip \noindent
\textbf{Parity-breaking extensions.}
Recall that  
    $H^*=H_m-\mathcal{S}_m$ and $G^*=G_m-N_{G-(V(H_m)\setminus \mathcal{S}_m})[\mathcal{S}_m]$.
    We construct a parity-breaking extension from $H^*$ and $G^*$.
By Lemma~\ref{lem:paritybreaking}, every $H^*$-path in $G^*$ is parity-breaking.
Thus, it suffices to consider $H^*$-paths without regard to parity.

    For Rule~\ref{prior1}, we guess two vertices~$a$, $b$ in distinct components of $H^*-\bigcup_{j\in [i-1]}Y_j$ and find an $(a,b)$-path in \[G^*-\bigcup_{j\in [i-1]}Y_j-(V(H^*)\setminus \{a,b\}).\]  Among all such paths, we choose one of minimum length.
    For Rules~\ref{prior2} or \ref{prior3}, we guess four vertices $a,b,c,d$ that are contained in the same component~$R$ of $H^*-\bigcup_{j\in [i-1]}Y_j$ where $a<_R b<_R c<_R d$, $ab\notin E(H)$ and $cd\notin E(H)$.
Using a max-flow algorithm, we find two vertex-disjoint paths from $\{a,b\}$ to $\{c,d\}$ in \[G^*-\bigcup_{j\in [i-1]}Y_j-(V(H^*)\setminus \{a,b,c,d\})\]
    if such paths exist, and then shorten as described.
If the two paths link~$a$ to~$c$ and~$b$ to~$d$, respectively, then they form a strongly crossing pair.
If they link~$a$ to~$d$ and~$b$ to~$c$, respectively, then they form a strongly nested pair.

    Therefore, a maximal parity-breaking extension $\{(L_j,Y_j)\}_{j\in[q]}$ can be constructed in time $O(q\cdot |V(G)|^2)=O(|V(G)|^3)$.

    Let $m'$ be the number of components of $H^*$.
    By Lemma~\ref{lem:caselargeq}, if $q>11m'+23g_{\theta}(k)$, then~$G$ contains an induced packing of~$k$ even cycles.
Following the proof of Lemma~\ref{lem:caselargeq}, it suffices to find~$k$ vertex-disjoint even-$N$ cycles in the corresponding path-matching graph.
    In the proof of Proposition~\ref{prop:many-links}, there always exist~$k$ vertex-disjoint even-$N$ cycles.
Moreover, the only nontrivial algorithmic step in that proof is to find either~$k$ vertex-disjoint $\theta$-minor models or a $\theta$-minor hitting set, which can be done in polynomial time by Theorem~\ref{thm:thetaminor}.
Therefore, the~$k$ vertex-disjoint even-$N$ cycles, and the corresponding induced packing of~$k$ even cycles in~$G$, can be found in polynomial time.

    Thus, we may assume that
    $q\le 11m'+23g_{\theta}(k)$.
Lemma~\ref{lem:removingcrossingnested} greedily packs pairs of crossing or nested paths
$(P_1,Q_1),\ldots,(P_{q'},Q_{q'})$.
As in the previous argument, this can be done in time $O(|V(G)|^3)$.
Each pair $(P_s,Q_s)$, together with the path $I(P_s)\cup I(Q_s)$, contains a $\theta_3$-minor.
Following the proof of Lemma~\ref{lem:removingcrossingnested}, we can then construct the corresponding induced packing of even cycles in polynomial time.
     
\medskip \noindent
\textbf{Bridges.}
Recall that~$Z$ is the set obtained by applying Lemma~\ref{lem:removingcrossingnested},
$H^{\dagger}=H^*-\bigcup_{i\in [q]}Y_i-N_{H^*}[Z]$
and 
$G^{\dagger}=G^*-\bigcup_{i\in [q]}Y_i-N_{H^*}[Z]$.
By Lemma~\ref{lem:singlebridge},
every even cycle of $G^\dagger$ is contained in a single
$H^\dagger$-bridge~$F$, where $N_{G^{\dagger}}[V(F)\cap V(H^{\dagger})]$ meets every even cycle contained in~$F$.
For each component~$C$ of $G^\dagger-V(H^\dagger)$, we test whether
the corresponding $H^\dagger$-bridge
$G^\dagger[V(C)\cup N_{G^\dagger}(C)]$
contains an even cycle.
By processing all such components and following the greedy procedure
in the proof of Lemma~\ref{lem:final-greedy}, we can obtain one of
the two outcomes of Lemma~\ref{lem:final-greedy} in polynomial time.

Therefore, one of the two outcomes in Theorem~\ref{thm:main2} can be found in polynomial time. 

\subsection{Corollary on tree-independence number}

We derive a consequence of our main theorem concerning
the tree-independence number.

A \emph{tree-decomposition} of a graph~$G$ is a pair~${\mathcal{T} = (T,\beta)}$ of a tree~$T$ and a function~${\beta \colon V(T)\to 2^{V(G)}}$ whose images are called the \emph{bags} of~$\mathcal{T}$ such that 
\begin{enumerate}
    \item ${\bigcup_{u \in V(T)} \beta(u) = V(G)}$,
    \item for every~${uv \in E(G)}$ there exists some~${t \in V(T)}$ with~${\{u,v\} \subseteq \beta(t)}$, and 
    \item for every vertex~${v \in V(G)}$ the set~${\{ u \in V(T) \colon v \in \beta(u) \}}$ induces a subtree of~$T$. 
\end{enumerate}
The \emph{width} of~${(T,\beta)}$ is~${\max\{ \abs{\beta(u)}-1 \colon u \in V(T) \}}$ and the \emph{tree-width} of~$G$ is the minimum width among all possible tree-decompositions of~$G$. 

An \emph{independent set} in a graph~$G$ is a set of pairwise non-adjacent vertices. The \emph{independence number} of a graph~$G$, denoted by~${\alpha(G)}$, is the maximum size of an independent set in~$G$. 

The \emph{independence number} of~$\mathcal{T}$ is defined as  
$\alpha(\mathcal{T}) = \max_{t\in V(T)} \alpha(G[\beta(t)])$. 
    The \emph{tree-independence number} of a graph~$G$, denoted by~$\tin(G)$, is the minimum independence number among all possible tree-decompositions of~$G$. 

\begin{corollary}\label{cor:tree-independence}
For all positive integers $k$ and $t$, every $K_{1,t}$-free graph $G$
containing no induced packing of $k$ even cycles has tree-independence number $O(tk^2\log k)$.
Moreover, a tree decomposition witnessing this bound can be constructed
in polynomial time.
\end{corollary}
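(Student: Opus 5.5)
Apply Theorem~\ref{thm:main} to $G$ and $k$. Since $G$ has no induced packing of $k$ even cycles, we obtain in polynomial time a set $X$ with $\abs{X}=O(k^2\log k)$ such that $G-N_G[X]$ has no even cycle. Because $G$ is $K_{1,t}$-free, the neighbourhood of each $x\in X$ has independence number less than $t$. Hence
\[
\alpha\bigl(G[N_G[X]]\bigr)\le \sum_{x\in X}\alpha\bigl(G[N_G[x]]\bigr)\le t\abs{X}=O(tk^2\log k).
\]
Write $Y=N_G[X]$ and $G'=G-Y$, so that $G'$ has no even cycle.

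The plan is to take a tree-decomposition $(T,\beta')$ of $G'$ of small width and set $\beta(u)=\beta'(u)\cup Y$ for every node $u$. Adding a fixed set to every bag preserves all three axioms of a tree-decomposition, and it covers the edges inside $Y$ and between $Y$ and $V(G')$. Moreover,
\[
\alpha(G[\beta(u)])\le \alpha(G[\beta'(u)])+\alpha(G[Y])\le \abs{\beta'(u)}+O(tk^2\log k).
\]
So it suffices to show that graphs with no even cycle have bounded treewidth, with a decomposition computable in polynomial time.

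This is the only step needing care, though it is classical. A graph with no even cycle has no two cycles sharing an edge. Indeed, two cycles sharing an edge contain a $\theta$-subgraph, that is, three internally disjoint paths between two vertices, and two of these three paths have equal length parity, so their union is an even cycle. Hence every block of $G'$ is an edge or a cycle, and $G'$ is a cactus.

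For a cactus, we build a decomposition by first taking a block-cut tree and then decomposing each cycle block into a path of bags of size $3$ (fan triangulation from a fixed vertex). We glue these pieces at the cut vertices. This gives width at most $2$, and it is computable in linear time once the blocks are found.

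Combining the two parts, every bag induces a subgraph with independence number at most $3+t\abs{X}=O(tk^2\log k)$. The whole construction — the call to Theorem~\ref{thm:main}, computing $N_G[X]$, the block decomposition, and adding $Y$ to every bag — runs in polynomial time.
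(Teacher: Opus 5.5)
Your proof is correct and follows the paper's argument. You apply Theorem~\ref{thm:main} to get $X$, use $K_{1,t}$-freeness to bound the independence number of $N_G[X]$ by $t\abs{X}$, and add $N_G[X]$ to every bag of a width-$2$ decomposition of the cactus $G-N_G[X]$, so every bag has independence number at most $t\abs{X}+3$. The only difference is that you also justify why an even-cycle-free graph is a cactus and how to decompose it, which the paper takes as known.
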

\begin{proof}
By Theorem~\ref{thm:main}, there is a set $X$ of size $O(k^2\log k)$
such that $G-N_G[X]$ contains no even cycle. Hence $G-N_G[X]$ is a cactus
and has tree-width at most $2$.

We take a tree-decomposition of $G-N_G[X]$ of width at most $2$ and add
$N_G[X]$ to every bag. Since $G$ is $K_{1,t}$-free, every independent
set in $N_G[X]$ has size at most $t|X|$. Thus every resulting bag has
independence number at most
$t|X|+3=O(tk^2\log k)$.
The decomposition can be obtained in polynomial time.
\end{proof}

\paragraph{Statement of AI Use}
We used ChatGPT 5.6 Sol during the preparation of this manuscript for language editing and to improve the clarity of the exposition.
All mathematical statements, proofs, and references were independently verified by the authors.

\end{document}